\newcommand{\average}{-\!\!\!\!\!\!\int}
\newcommand{\comment}[1]{}
\renewcommand{\div}{{\mathop{\rm\,div\,}\nolimits}}
\newcommand{\dist}{\mathop{\rm dist}\nolimits}
\providecommand{\qed}{\vrule height 6pt depth 0pt width 3 pt}
\newcommand{\reals}{{\bf R}}
\newcommand{\BibTeX}{{\rm B\kern-.05em{\sc i\kern-.025em b}\kern-.08em     
    T\kern-.1667em\lower.7ex\hbox{E}\kern-.125emX}}
\newcommand{\note}[1]{}
\renewcommand\marginpar[1]{}
\providecommand\lp[1]{L^{#1}}
\newcommand{\sobolev}[2]{ \dot W ^ {{#2},{#1}}}
\newcommand{\ball}[2]{B_{#2}(#1)}
\newcommand{\locdom}[2]{{\Omega_{#2}(#1)}}
\newcommand{\sball}[2]{{\Delta_{#2}(#1)}}
\newcommand{\cyl}[2]{Z_{#2}(#1)}
\newcommand{\avg}[3]{ \bar #1 _{#2,#3}}
\newcommand{\sign}{\mathop {\rm sign}\nolimits}
\newenvironment{proof}[1][Proof]{\begin{trivlist}\item[\hskip \labelsep
{\it #1. }]}{\hfill \qed \goodbreak \end{trivlist}}   
\newenvironment{remark}[1][Remark]{\begin{trivlist}\item[\hskip \labelsep
{\it #1. }]}{  \goodbreak \end{trivlist}}   
\numberwithin{equation}{section} 
\newtheorem{theorem}[equation]{Theorem}
\newtheorem{proposition}[equation]{Proposition}
\newtheorem{corollary}[equation]{Corollary}
\begin{document}
\title{The Green function  for elliptic systems in two dimensions}

\author{J.L. Taylor \\ Department of Mathematics \\ Murray State University \\
Murray, Kentucky  42071, USA
\and
 S. Kim
\footnote{
Seick Kim is supported by NRF Grant No. 2010-0008224 and R31-10049 (WCU program).}
  \\ Department of Computational Science and Engineering \\ Yonsei University \\ Seoul, Korea
\and
 R.M. Brown\footnote{
This work was partially supported by a grant from the Simons
Foundation (\#195075 to Russell Brown).}
 \\ Department of Mathematics\\ University of Kentucky \\
Lexington, Kentucky 40506, USA}
\date{}
\maketitle

\abstract{ 
We construct the fundamental solution or Green function for a divergence
form elliptic system in two dimensions with bounded and measurable coefficients.
We consider the 
elliptic system  in a Lipschitz domain with mixed boundary conditions. Thus we  
 specify Dirichlet data
on part of the boundary and Neumann data on the remainder of the boundary. 
We require
a corkscrew or non-tangential accessibility condition on the set where we specify
Dirichlet boundary conditions. 
Our proof
proceeds by defining a variant of the space $BMO (\Omega)$ that is adapted to the
boundary conditions and showing that the solution exists in this 
space. 
We also give a  construction of  the Green function with 
Neumann boundary conditions and the fundamental solution in the plane. 
}

\section{Introduction}\label{Introduction}
We consider a weak formulation of the mixed problem for a second-order
elliptic operator in a bounded, connected, and  open set $ \Omega$ in $ \reals ^2$. To
state the mixed problem, we fix a decomposition of the boundary $
\partial \Omega = D \cup N$ with $ D\cap N= \emptyset$. We let $L$ be
an elliptic operator  in divergence form with  bounded and measurable
coefficients and we  consider the boundary value problem
\begin{equation}\label{MP}
\left\{
\begin{array}{ll}
Lu =f  \qquad &\mbox{in } \Omega \\
u= 0  \qquad & \mbox{on } D\\
\frac{\partial u }{ \partial \nu} = f _N , \qquad & \mbox {on }N.
\end{array}
\right. 
\end{equation}
where $ \partial /\partial \nu$ is the natural co-normal derivative
associated with the operator $L$. We will require that $  \Omega$ be a
Lipschitz domain and that the set $D$ satisfy a cork-screw condition
(or non-tangential accessibility condition) as in \cite{TOB:2011}.
We will give a precise formulation of the mixed problem 
in section \ref{Domains}. We emphasize that our results apply to 
elliptic systems where  $L$ acts on vector-valued functions as
well as equations.  Our goal is to give a construction of the
matrix-valued 
Green function for this boundary condition and show that the Green
function satisfies the estimates
\begin{eqnarray*}
|G(x,y) | & \leq & C( 1+ \log ( d /|x-y|)), \qquad x, y \in \Omega \\
|G(x,y) - G(x,z) | & \leq & C \frac { |y-z|^ \gamma }{ |x-y | ^
\gamma } , \qquad x,y,z \in \Omega , \ |y-z|< \frac 1 2 |x-y|\\
|G(x,y) | & \leq & \frac { \dist (y, D )^ \gamma}{ |x-y |^ \gamma } , \qquad
\dist (y, D) < \frac 1 2 |x-y | . 
\end{eqnarray*}
where $d$ is the diameter of $ \Omega$ and the constant $C$ and $
\gamma >0$ depend only on assumptions on the operator, the domain, and
the decomposition of the boundary.

Given the logarithmic singularity of the Green function, it is natural
to look for a Green function in the space of functions of bounded mean
oscillation. We fix  a set $D$ and  define the  space $BMO_D(
\Omega)$ which consists of functions in $BMO( \Omega)$ and  which
vanish in an appropriate sense as we approach $D$. We will construct
the Green function in this space and then  obtain the pointwise
estimates.

There is a great deal of literature on the existence of Green
functions and we do not try to summarize it all here.
 Littman, Stampacchia, and Weinberger \cite{LSW:1963}
establish that the Green function has a logarithmic singularity in two
dimensions. Gr\"uter and Widman \cite{MR657523} give a nice
construction of the Green function for the Dirichlet problem in
dimensions three and higher and treat operators that are not
self-adjoint.  Kenig and Pipher \cite{KP:1993} give a construction of
the Neumann function or the Green function for the Neumann problem
when $ n \geq 3$.  The existence of a global fundamental solution for
an elliptic operator in the plane was established by Kenig and Ni
\cite{KN:1985}. Using their results, Chanillo and Li \cite{MR1190215}
show that the global fundamental solution lies in $BMO( \reals ^2)$.

The results of the previous paragraph
 are for single equations. More recently, there has been
interest in constructing Green functions for elliptic systems. In this
case, we are only able to obtain upper bounds.  The work of Auscher
and collaborators \cite{MR1600066} establishes the existence of a
Green function for operators with complex coefficients (which may be
viewed as a system of equations with real coefficients).  This work
was motivated in part by their interest in the Kato problem.  Dolzmann
and M\"uller \cite{MR1354111} construct the Green function in a $C^1$-domain 
for an elliptic operator with continuous coefficients.  Dong and
Kim \cite{MR2485428} establish the existence of a Green function for
elliptic systems which are not assumed to be self-adjoint under the
hypothesis that solutions of the operator are bounded or H\"older
continuous. Their argument gives Gaussian upper bounds for the
parabolic Green function and then integrates the parabolic Green
function in time giving the Green function for the elliptic problem.
Recently, Choi and Kim \cite{arXiv:1112.2436v1} have given a
construction of the Neumann function in dimensions three and higher
under the assumption that solutions of the operator satisfy local
H\"older continuity and boundedness estimates.  In two dimensions, the
necessary results on the H\"older continuity of solutions dates back
to Morrey \cite{MR1501936}.  A paper of Calanchi, Rodino, and Tri
\cite{MR1471017} observes that the Green function for the Dirichlet
problem for the Laplacian lies in $BMO$ for planar domains. Their
proof relies on the maximum principle which is not available to treat
the systems considered here.  In addition to our main goal of treating
the Green function for the mixed problem, the present paper
complements the work of Choi and Kim by providing a construction of
the Green function for the Neumann problem in two dimensions.  Recent
work of D.~Mitrea and I.~Mitrea \cite{MR2763343} gives the existence
of the Dirichlet Green function for various constant coefficient
operators in Lipschitz domains.

The present paper was motivated by an effort to construct the Green
function for elliptic systems under mixed boundary conditions. Taylor,
Ott, and Brown \cite{TOB:2011} give a construction of the Green
function for a class of mixed problems for the Laplacian in a
Lipschitz domain in dimensions two and higher.  Their
argument begins with the free space fundamental solution and uses a 
reflection to
first construct a Green function for Neumann boundary conditions and finally
corrects the boundary conditions  to give
a Green function for mixed boundary conditions.   Below,
we provide a new proof of their results in two dimensions and
establish the existence of a Green function with mixed boundary
conditions for a large class of
elliptic systems in two dimensions. We expect that the Green 
function constructed below  
will be useful in extending Taylor, Ott, and Brown's
results on the $L^p$-mixed problem for the Laplacian
 to systems in two dimensions. It is an interesting, open question
to study the mixed problem for elliptic systems 
in dimensions three and higher. The main difficulty in carrying out
this extension is the lack of estimates for the Green function. 

The construction of the Green function for an operator $L$ is closely
related to local, scale-invariant estimates for solutions of $L$. In
the case of a single equation, the H\"older continuity of solutions to
the mixed problem was established by Stampacchia \cite{GS:1960} using
the method of De Giorgi. Stampacchia considers the mixed problem in
domains which locally are equivalent by a bi-Lipschitz transformation
to a mixed problem in a half-space with the boundary between $D$ and
$N$ a hyperplane.  A more recent work of Gr\"oger \cite{MR990595} uses the
method of Meyers \cite{NM:1963} to show 
that solutions of the mixed problem with
nice data satisfy $ \nabla u \in L^ { 2+ \epsilon } ( \Omega)$.  
Gr\"oger's assumptions on
the domain and the decomposition of the boundary are similar to those
of Stampacchia.  When considering the more general decompositions of
the boundary introduced by Taylor, Ott, and Brown and considered in
this paper, it seems to be simpler to use the method of reverse
H\"older inequalities as in Gehring \cite{FG:1973} and Giaquinta and
Modica \cite{MR549962} to obtain that $ \nabla u \in 
\lp  { 2+ \epsilon}( \Omega)$.

Our formulation of the mixed problem allows for the extreme cases where
$D = \emptyset $ or $D = \partial \Omega$ which give the Neumann problem and the
Dirichlet problem, respectively.  The Green function for the Dirichlet problem is
treated alongside the mixed problem in section \ref{Green}.  
The properties of 
the Green function are given in Theorem \ref{GreenTheorem}. 
 The  changes  needed for
the Neumann problem  appear in section \ref{Neumann} and the properties of the
Green function for the Neumann problem are given in Theorem \ref{NGreenTheorem}. 
 In Section \ref{Free}, we provide a 
proof of the existence of a fundamental solution in the plane. 
The construction of a global fundamental solution for an equation was perhaps
known, however the detailed construction appears to have first been written
down  in a paper of 
Kenig and Ni from 1985 \cite{KN:1985}. We provide a different construction
of the fundamental solution which also applies to 
 systems.     The existence of a fundamental 
solution in the plane and the properties of this fundamental solution are given in
Theorem \ref{FGreenTheorem}. 
The Lam\'e system (with variable Lam\'e parameters) provides a family of examples
to illustrate the use of our results.  We describe this system in section 
\ref{Domains} and 
show that it satisfies the hypotheses of our main results.

\section{Preliminary material and the weak formulation of the
  mixed problem}
 \label{Domains}
 
We will consider the Green function for boundary value problems in a 
 bounded Lipschitz domain $ \Omega$  and we begin by 
giving the definition of these domains. 
Given a constant $M>0$,   
$ x \in \partial \Omega$, and $ r>0$,  
we let $ \cyl x r = \{ y : |y_1- x _1  | <r ,
|y_2 -x _2 | < ( 3M +1)r\}   $. We say that $ \cyl x r $ is a {\em coordinate
rectangle for $ \Omega$ }if there is a Lipschitz function $ \phi : \reals \rightarrow
\reals$ so that 
\begin{eqnarray*}
\Omega \cap \cyl x r&  = & \{ y : y_ 2 > \phi ( y_1) \} \cap \cyl x r
\\
\partial \Omega \cap \cyl x r&  = & \{ y : y_ 2 = \phi ( y_1) \} \cap
\cyl x r  
\end{eqnarray*}
We assume that the coordinate system used in this coordinate rectangle is
a rotation of the standard coordinate system. 
We say that $ \Omega $ is a {\em Lipschitz domain }if $ \Omega$ is a 
bounded  connected open set and 
 for each $ x \in \partial \Omega$, we may find a coordinate 
rectangle centered at $x$. Since the boundary is compact, 
we may cover $ \partial \Omega$ by a finite collection of coordinate rectangles 
$\{ \cyl { x_i } { r _ i } \}_{ i =1 } ^ N$ so that 
 each $ \cyl {x_i}  { 100 r_i } $  is also
a coordinate rectangle. We set
 $ r_0 = \min \{ r _ i : i = 1, \dots, N\}$ and we will use this 
value as a characteristic length of the domain when 
we state scale-invariant estimates. 

For $ x\in \bar \Omega$ and $ \rho \in ( 0, 4r_0)$, 
we define{ \em local domains }$ \locdom x 
\rho $ which will play the role of disks in our work. In the case that $\dist ( x,
\partial \Omega ) > \rho$, we let $ \locdom x \rho = \ball x\rho$, 
the disk centered at $x$ with radius 
$ \rho$. In the case that $ \dist (x , \partial \Omega ) \leq \rho$, 
we pick a coordinate 
rectangle  containing $ x  = (x_1, x_2)$ and using the coordinates 
for this rectangle, we  let $ 
\hat x  = ( x_1 , \phi(x_1))$ be obtained by projecting 
onto the boundary and  define  $ \locdom x 
\rho = \cyl { \hat x } \rho \cap \Omega$. 

\note{ If $ x^* = x+3Mre_2$ and $y$ is in the disk 
centered at $x^*$ of radius $r/2$ and $z$ 
$\partial \Omega \cap \cyl x r$, then the slope of the line 
joining $z$ to $y$ is bounded below 
by $4M/3$. This implies the domain is star-shaped with respect to $y$. }

For $ x\in \partial \Omega$ and $ \rho  \in (0, r_0)$, 
we define a {\em boundary interval }$ \sball x \rho$ by 
 $ \sball x \rho =\cyl x \rho \cap \partial \Omega$. 
The domains $ \locdom x  \rho$ are star-shaped Lipschitz domains and the 
boundary intervals $ \sball x \rho$ are connected.
These properties are helpful in establishing the  Poincar\'e and 
Korn inequalities introduced below.  For this reason, we 
prefer them to $\ball x \rho \cap \Omega$ 
and $ \ball x \rho\cap \partial \Omega$ used in \cite{OB:2009,TOB:2011},
  for example. There is a price to pay as given $ x$ and $\rho$ 
  we will have several choices for $ \sball x \rho$ 
and $ \locdom x\rho$. Our results will hold 
for any such choice provided that when several of 
these objects occur in an estimate we use the 
same coordinate rectangle to define each of them.

We let  $ D\subset \partial \Omega $ be the set where we specify the Dirichlet data 
and then we put $ N = \partial \Omega \setminus D$. We will require that $D$ 
satisfy the {\em interior corkscrew condition}. 
This means that for each $ x\in \partial D$ (where the 
boundary is taken with respect 
to $\partial \Omega$) and $ r \in (0, r_0)$, we may find  $ x_r \in D$ 
so that 
\begin{equation} 
\label{Corkscrew}
 |x- x_r | \leq r \mbox{ and } \dist (x_r, N) \geq M^ { -1 } r. 
\end{equation}
As our boundary is locally a Lipschitz graph, this easily 
implies that for each $x \in D$ and $ r \in (0, r_0)$, 
we have that $ \sigma ( \sball x r  \cap D ) \geq c r$ (see \cite[Section 1]
{TOB:2011} for details). 
Note that if $D$ is not empty, then the corkscrew condition implies 
that the interior of $D$ is not empty.

Now we turn to the precise description of the boundary value problem. 
As we are considering an operator acting on vector-valued functions,
 most of the functions we consider will take values in $ \reals ^m$ for some $m$.
  We do not explicitly denote the 
target in our notation for function spaces. However, we emphasize that in the
 definitions below, all of the functions will be vector-valued. 
 We let $ \sobolev p1( \Omega)$ denote the homogeneous Sobolev space
 of functions having one derivative in $L^p ( \Omega)$ with the norm
 $ \|u \|_ { \sobolev p1 _D ( \Omega)} = \| \nabla u \| _ { L^ p (\Omega)}$. 
Given a set   $D\subset \partial 
\Omega$, we  let $ \sobolev p 1 _D ( \Omega)$ denote the subspace 
obtained by taking the closure 
in $ \sobolev p 1 ( \Omega)$ 
of the smooth functions in $ \bar \Omega$ which vanish in a neighborhood of $D$.
Thus, the elements
in $ \sobolev p 1 _D ( \Omega)$ are functions which in some sense vanish on $D$. 
When $D$ is not empty and satisfies the corkscrew condition and  $1< p< \infty$,
 we have that if $ u \in \sobolev  p 1 _D ( \Omega)$, then 
\begin{equation}
\label{DMatters}
r_0 ^ { -1/p } \left ( \int _ { \partial \Omega } |u|^ p \, d\sigma \right) ^ { 1/p } +
r_0 ^ { -2/p } \left( \int _ \Omega |u|^p \, dy\right ) ^ { 1/p} 
\leq C \left( \int _ { \Omega } |\nabla u |^ 2 \, dy \right ) ^ { 1/2} . 
\end{equation}
for a constant $C$ which depends on $M$, $p$, and  $\Omega$.  
We will let $ \sobolev  { t'}   {-1} _D ( \Omega)$ be the dual of
 $ \sobolev t 1 _D ( \Omega)$ when $ 1\leq t < \infty$. 
The space  $ \sobolev 2 { 1/2} _D (\partial \Omega) $ is defined to  be the 
 image of $ \sobolev 2 1 _D ( \Omega)$ 
under the trace map and then $ \sobolev 2 { -1/2} _D ( \partial \Omega)$ 
the dual of $ \sobolev 2 {1/2} _D (\partial  \Omega)$.  We let 
$ \langle \cdot, \cdot \rangle: 
\sobolev 2 {-1}  _D ( \Omega) \times \sobolev 2 1 _D( \Omega) 
\rightarrow \reals$ and 
 $ \langle \cdot, \cdot \rangle_{\partial \Omega}: 
\sobolev 2 {-1/2}  _D (\partial \Omega) \times \sobolev 2 {1/2} _D( \partial \Omega) 
\rightarrow \reals$
be the pairings of duality. 

\note{ Do we need the boundary term in (\ref{DMatters}). }

The operator $L$ will act on vector-valued functions in the plane. 
Formally, $L$ is given by 
\begin{equation}
\label{OpDef}
(L u )^ \alpha = \sum _ { i,j= 1} ^ 2
\sum _ { \beta= 1} ^ m \frac{\partial}{\partial x _ i 	}
a_ { \alpha \beta } ^ { ij} 
\frac{\partial u ^ \beta}{\partial x _j }, \qquad \alpha = 1, \dots, m.
\end{equation}
The coefficients $a^{ij}_{\alpha \beta}$ 
are assumed to be real, bounded,  and measurable functions
\begin{equation}
\label{CoeffBound}
\max \{ \| a  ^{ij} _{\alpha \beta }  \|_ { L^ \infty ( \Omega ) } : 
i,j = 1,2,  \alpha, \beta = 1, \dots, m \} \leq M . 
\end{equation} 
We do not assume a symmetry condition on the coefficients and thus the
operator $L$ will not be self-adjoint. 
We will
define $Lu$ as an element of $ \sobolev 2 { -1} _D( \Omega)$ 
via the quadratic form 
$$
A( u, \phi ) 
= \int _ { \Omega } a^ { ij } _ { \alpha \beta } 
\frac { \partial u ^ \beta } { \partial x _ j} 
\frac{ \partial \phi ^ \alpha } { \partial x_i} \, dx .
$$
Here and throughout this paper, we follow the convention that we
sum on repeated indices. 
Since the coefficients are bounded,  for some constant $C=C(M,m)$ we have
\begin{equation}
\label{Bounded}
|A ( u, \phi )| \leq C \|u \| _ { \sobolev 21  ( \Omega) }
 \| \phi \|_ { \sobolev 2 1 ( \Omega)} 
.
\end{equation}
We assume the following local ellipticity condition. If 
$ x \in \bar \Omega$,   $ \rho \in  (0, r_0)$,  
and $u \in \sobolev 2 1 _ \emptyset ( \Omega)$, then for 
all constant vectors $ c \in \reals ^ m$, we have 
\begin{equation} 
\label{Ellipticity}
M^ { -1} \int _ { \locdom x \rho } |\nabla u |^ 2 \, dy 
\leq \int _ { \locdom x \rho } a^ { ij}_ { \alpha \beta } 
\frac { \partial u ^ \beta} { \partial y_j } \frac {  \partial u ^ \alpha }
{ \partial y _i }  + \rho ^ { -2} |u - c|^2\, dy . 
\end{equation}
\note{ Note that if this estimate holds for one value of $c$, it 
holds for all values of $c$.  For strongly elliptic systems, 
we do not need the $L^2$ norm. For 
Lame, we need the extra term. 
} 
In addition, we assume a global coercivity condition  on $D$ and the
form $ A$ 
\begin{equation}\label{Coerce}
M^ { -1} \int _ { \Omega } |\nabla u |^ 2 \, dy \leq A (u,u), 
\qquad u \in \sobolev 2 1 _D ( \Omega).
\end{equation}

The conditions (\ref{Ellipticity}) and (\ref{Coerce}) 
are immediate if $D$ has non-empty interior and  
the coefficients satisfy the strong 
ellipticity condition 
$$
a^ { ij } _ {\alpha \beta} \xi _j ^ \beta 
\xi _i ^ \alpha \geq c |\xi |^ 2, \qquad \xi \in \reals ^ { 2m} .
$$
At the end of this section, we will show that they also hold for the Lam\'e
system when  the set $D$ is non-empty and satisfies the corkscrew condition. 

We are now ready to give the  precise formulation of (\ref{MP}). 
Given $f$ in $ \sobolev
2 {-1} _D ( \Omega)$ and $f_N \in \sobolev 2 { -1/2} _D ( \partial \Omega)$ 
we say that $u$ is a {\em weak solution of the mixed problem }(\ref{MP}) if we have
\begin{equation}
\label{WMP}
\left\{\begin{array}{ll}
A(u,\phi)  = - \langle f, \phi\rangle + \langle f_N, \phi \rangle _
 { \partial \Omega} , \qquad \phi \in \sobolev 2 1 _D ( \Omega ) \\
u \in \sobolev 2 1 _D ( \Omega).
\end{array}
\right. 
\end{equation}
With our continuity (\ref{Bounded}) and global coercivity (\ref{Coerce}) 
assumptions, the existence and uniqueness
of solutions is an immediate consequence of the Lax-Milgram theorem. 

We also consider the adjoint problem for the operator $L^*$ whose
coefficients are  obtained by replacing
 $ a^ {ij}_ {\alpha \beta}$ by $a^ {ji}_{\beta\alpha}$. 
We say that $w$ is a weak solution of the mixed problem for $L^*$ 
$$
\left\{ \begin{array}{ll}
L^* w = g , \qquad &\mbox{in } \Omega \\
\frac { \partial w }{ \partial \nu } = g_N, \qquad & \mbox{on } N\\
w = 0 , \qquad & \mbox{on } D
\end{array}
\right. 
$$
if we have
\begin{equation}
\label{AWMP}
\left\{\begin{array}{ll}
A(\phi, w)  = - \langle g, \phi\rangle + \langle g_N, \phi \rangle _
 { \partial \Omega} , \qquad \phi \in \sobolev 2 1 _D ( \Omega ) \\
w \in \sobolev 2 1 _D ( \Omega).
\end{array}
\right. 
\end{equation}

Given a locally integrable function on 
$ \Omega$, $ x \in \bar \Omega$,  a subset $D \subset
 \partial \Omega$, and $ \rho \in ( 0, r_0)$,   we define 
$$
  \avg u x \rho = \left \{ \begin{array}{ll}
 0, \qquad &  \mbox{if } 
 \dist(\locdom x \rho  ,D) = 0  \\
\displaystyle{  \average _ { \locdom x \rho } u(y) \, dy } , \qquad & \mbox{if } 
\dist(\locdom x \rho , D) >0
\end{array} 
\right. 
$$ 
Here, we are using $ \average _E f\, dy = 
|E|^ { -1} \int _E f \, dy $ to denote the average of $E$. We 
will define the space $BMO_D( \Omega)$ to be the collection 
of integrable functions $u$ on $\Omega$ for which the norm 
$$
\| u \|_ { *,D} =  
\sup \{ \average _ {\locdom x \rho } | u - \avg u x \rho | \, dy: x\in \bar \Omega, 
\rho \in (0, r_0) \} 
$$
is finite. The supremum is taken over all local domains. 
We will be primarily interested in the 
case when $ D \neq \emptyset$ and then it is 
easy to see that the above expression is a norm. 
In the case where $ \Omega$ is $ \reals ^2$ or 
$D = \emptyset$, then the elements of the space will be equivalence 
classes of functions which 
differ by a constant. 

We note that if $ D _1 \subset D_2$, then we have that $ BMO _ {D_1} 
( \Omega) \supset BMO _ { D_2} ( \Omega)$ and 
in particular we have $ BMO _ D ( \Omega)
 \subset BMO _ \emptyset( \Omega)$. Thus, we obtain the John-Nirenberg inequality 
 $$
 |\{ y\in \locdom x \rho : |u(y) - \avg u x \rho |  > \lambda \} |
  \leq C \rho ^ 2 \exp ( - \lambda / \| u \|_{*, D})\}.
 $$
The standard proof for $ BMO(\reals^n)$  as found in the lecture notes of Journ\'e 
\cite[Chapter 3]{MR706075}, for example, extends easily to the space 
$BMO_D( \Omega)$.  As a consequence, it follows that if $ D \neq \emptyset$, then 
$ BMO_D ( \Omega) \subset L^p( \Omega)$, for $ 1\leq p < \infty$. In the case
 where $ \Omega = \reals ^2$, we can show that any representative of a 
$ BMO( \reals^2) $ function lies in $\cap_ { p < \infty}  L^p_ { loc} 
( \reals ^2)$ and in the
case of a bounded domain any representative of a function in $ BMO_ \emptyset ( \Omega)$  also lies in $ \cap _ { p < \infty } L^ p ( \Omega)$. 

We define  atoms for $ \Omega$ and $D$ and 
then the Hardy space $H^1 _ D( \Omega)$.
We say that a bounded measurable function $a$ is 
an{ \em atom for $ \Omega$ and $D$  }if  $a$ 
is supported in 
one of the local domains $ \locdom x \rho $ and satisfies
\begin{eqnarray*}
\|a\|_ { L^ \infty ( \Omega) } & \leq & 1 /|\locdom x \rho| \\
\avg a x \rho  & = & 0 .
\end{eqnarray*}
A function $f$ is in the atomic Hardy space $H^1 _D ( \Omega)$ 
if there is a sequence of 
atoms $ \{ a_i \} _ { i =1 }^ \infty$ and a sequence of real numbers 
$ \{ \lambda _ i\} \in
 \ell ^1 $ so that $ f = \sum _ { i =1 } ^ \infty \lambda _i a _ i $. We define a 
 norm on this space 
by 
$$
\| f\| _ { H^ 1_D ( \Omega) } = \inf \sum_{i=1}^ \infty |\lambda _i | 
$$
where the infinum is taken over all representations of $f$.

It will be useful to observe that the expression 
\begin{equation}
\label{NewNorm}
\sup \{ \int _ \Omega a^ \alpha u ^ \alpha \, dy : 
a \mbox { is an atom for $\Omega$ and $D$}\}
\end{equation}
gives an equivalent norm on $BMO_D( \Omega)$. This 
  proposition may be found in Journ\'e \cite[Chapter 3]{MR706075}
for $ BMO( \reals^n)$.
 The extension to  $BMO_D( \Omega)$ is straightforward. 
We will use the characterization of the $BMO_D(\Omega) $ norm  in (\ref{NewNorm}) 
to show that the Green 
function lies in $ BMO _D( \Omega)$. 
In fact, $ BMO_D ( \Omega)$ may be identified with the dual of the
 atomic Hardy space $H^ 1 _D ( \Omega)$.  See Journ\'e \cite{MR706075}, Coifman and 
 Weiss \cite{CW:1976} for the case when $D = \emptyset$. Chang \cite{MR1253178} treats
  the extreme cases where $D = \emptyset$ or $D = \partial \Omega$. The extension to 
 general $D$ is not difficult. 
\note{ 
Show that for $p>1$, $L^p ( \Omega) \subset H^ 1 _D ( \Omega)$ when $D$
is not empty. --see if this is used?

Alternate results for $ D = \emptyset$ and $ \Omega = \reals ^2$. 

Add statement about embedding of BMO into $L^p$.--Done

Add that this result does not require that $D$ satisfy corkscrew. 

} 

We recall several Poincar\'e and Sobolev inequalities that will be needed in the 
argument below.
 The first inequality 
 is a scale invariant Sobolev-Poincar\'e 
 inequality.  If $ u \in \sobolev p 1 _ D
( \Omega)$, $ 1 \leq p < 2 $ and $1/q= 1/p-1/2$, and $ \locdom x \rho$ 
is one of the local domains defined above,  then we may find a constant 
$C$ depending only on $M$ and $p$ so that 
\begin{equation}
\label{SoPo1}
\left ( \int _ { \locdom x \rho } |u - \avg u x \rho| ^ q \, dy \right) ^ { 1/q}
\leq C \left ( \int _ { \locdom x {2\rho} }|\nabla u |^ p\, dy \right ) ^ { 1/p}
.
\end{equation}
If $ \avg u x \rho = \average _{\locdom x \rho } u \, dy $, 
the inequality holds with $ \locdom x \rho$ as the domain of integration on the 
right-hand side. 
However, if $\dist ( \locdom x \rho , D ) =0$, then we need
to expand $ \locdom x \rho$ and use the corkscrew condition  (\ref{Corkscrew}) 
in order
to conclude that $u$ vanishes on a large enough set to obtain the 
 inequality (\ref{SoPo1}). See \cite[Section 3]{OB:2009} for more details. 
\note{ 
We sketch the proofs of the  above estimates.

First, we consider the case when $\avg u x \rho$ is the mean value of $u$. 
We observe that $ \locdom x
r$ is a star-shaped Lipschitz domain of scale $r$ and Lipschitz
constant $M$. Without loss of generality, we may
assume that the scale $r=1$ and the star-center $ x^* =0$.   We
let  $ \Omega _1$ denote the domain $ \Omega _1$. 

1. If we have $ \Omega _1 = \{ x : |x|<  \phi (x/|x|)\}$, then we
may define an extension operator
$$
E u (x) = \left \{
\begin{array}{ll}  u ( x) , \qquad & x< \phi(x/|x|)\\
\eta(x) u (\phi(x/|x|)x/|x|^2) , & x>  \phi(x/|x|)
\end{array}\right. 
$$
Here $ \eta$ is a cutoff function which is one in a neighborhood of
$\Omega_1$ and supported in  $ \ball 0  {2M} $. 
Verify that 
$$
\|\nabla Eu \|_{L^ p( \reals^n)}
\leq C (\| \nabla u \|_{ L^ p ( \Omega_1)} + \|  u \|_{ L^ p ( \Omega_1)}  ) 
$$
where $ C = C(M,n)$. 

2. Applying the inequality of  Gagliardo-Sobolev and using
the properties of the extension operator, we obtain that 
\begin{eqnarray*}
\|u- \bar u \|_ {L^ {np/(n-p)}( \Omega)} & \leq &  \| E ( u - \bar u )
\|_{ L^ {np/(n-p)} ( \reals ^ n)}   \\
& \leq  & C(p,n) ( \|\nabla E(u-\bar u) \|_{ L^ p ( \reals ^ n)} \\
& \leq & C(p,n, M) ( \| \nabla u \|_ { L^ p ( \Omega)}+\|(u-\bar u) \|_{ L^ p
  ( \Omega)} . 
\end{eqnarray*}

3. Since we  assume that $ \Omega _1 $ is starshaped with respect to every
point in the  set $S = \{ y: 3M+1> y _n > 3M\} \cap \Omega _1$ and  $ |S|> c$, we may use a variant of the argument in
Gilbarg and Trudinger Lemma 7.16, to see that 
$$
|u(x) - u _ { S}|\leq C \int _ {\Omega _1}  \frac {  |  \nabla u (y)
  |}  {|x-y|^ {     n-1} }\, dy . 
$$
Here, $u_S$ denotes the average of $u$ on the set $S$. 
This easily implies that 
$$
\| u - u_{ S} \|  _ { L^ p ( \Omega ) } \leq C \| \nabla u \| _ { L^ p (
  \Omega) } . 
$$

Using Minkowski's inequality and then Minkowski's integral inequality gives
$$
\|u - \bar u\|_p \leq \| u - u_S \|_p + \|\bar u - u_S\|_p 
\leq 2 \| u - u _S \|_p. 
$$
This gives (\ref{SoPo1}) when $ \locdom x \rho$ does not touch the boundary.

To prove the estimate near the boundary, we follow a similar argument.

1.  Define $E$ as in step 1 above.

2. Using the extension operator and Sobolev-Gagliardo as above, we
find that 
$$
\| u \|_ { L^ { np/(n-p)} ( \Omega_1 ) } \leq C ( \| \nabla u \|_ { L^p
  ( \Omega _1)} + \| u \|_ { L^ p ( \Omega_1)}).
$$

3. 
 If $u$
vanishes on a set $E \subset \sball x 1$, then let $ \hat E = \{ y + t
e_n : y \in E , t \in \reals \} \cap \Omega _1$. Using that $u$
vanishes on $E$ and the fundamental theorem of calculus, gives that
$$
u(y+te_n) = \int _0 ^ t \frac {\partial u }{ \partial y_n } (y+se_n)\,
ds, y \in E.
$$
Integrating in $t$ gives that 
$$
\int _ {\hat E} |u(y)|\, dy \leq C \int _ { \hat E  } |\nabla u | \, dy .
$$

4. Thus, if we let $\hat S = S\cap \hat E$, we claim that $ |\hat S | \geq
C(M,n)$ and thus
$$
\|u \|_ {L^ p ( \Omega _1)} \leq C \|u - u _ {\hat S}\|_ { L^ p ( \Omega_1)}
+|\Omega_1 |^ { 1/p} |u_{\hat S}|
\leq C \|\nabla u\|_{ L^ p ( \Omega _1)}. 
$$
We use the argument in point 3. of the proof of the
 above to estimate $u - u _ { \hat S}$
and the estimate in 3. to estimate $ |u_ { \hat S}| $

Since we assume that $ \sball x {1/2} \cap D \neq \emptyset$, it
follows from the corkscrew condition that $u$ vanishes on a set of
$E\subset \sball x r$ with surface measure bounded below by a constant
depending only on $M$ and the dimension. This gives the claimed lower
bound on $|\hat S|$. 

}  
A useful consequence of the inequality (\ref{SoPo1}) is 
the Poincar\'e inequality  for $ 1 \leq p < \infty $,
\begin{equation} 
\label{Poincare}
\left ( \int _ { \locdom x \rho } |u - \avg u x \rho| ^ p \, dy \right) ^ { 1/p}
\leq C\rho\left(\int _ { \locdom x {2\rho} }|\nabla u |^ p\, dy \right ) ^ { 1/p}
\end{equation}
Note that the Poincar\'e inequality (\ref{Poincare}) with $p=2$ 
 immediately implies the embedding $\sobolev 2 1 _D ( 
\Omega) \subset BMO_D( \Omega)$. The corkscrew condition is needed to establish
this embedding. 
If $ 2 < p < \infty$, we have 
the following version of Morrey's inequality
\begin{equation}
\label{Morrey}
\|u - \avg u x \rho \|_ { L^ \infty ( \locdom x \rho ) } 
\leq C\rho ^ { 1 - 2/p} \left ( \int _ { \locdom x {2\rho}} 
|\nabla u |^ p\, dy  \right ) ^ { 1/p} .
\end{equation}
This inequality gives the embedding of the Sobolev space 
$\sobolev t 1 _D( \Omega)$ into  space of H\"older continuous functions 
with exponent $ 1-2/t$ 
when $ t > 2$. 
Finally, we give an estimate at the boundary. 
Let $ \locdom x \rho$ be one of our local domains and suppose that 
$ 1 \leq p < \infty$, $1\leq q < 2$ and 
$ 1/q = 1/(2p)+ 1/2 $. There exists a constant $ C = C(p, M)$ 
so that for $u \in \sobolev q 1 _ D ( \locdom x {2\rho})$, we have
\begin{equation} 
\label{BoPo} 
\left( \int _ { \sball x \rho } |u - \avg u x \rho |^ p \, dy \right)^ { 1/p} 
\leq C \left ( \int _ { \locdom x { 2 \rho } } |\nabla u |^ q \, dy \right)
 ^ { 1/q} .
\end{equation}
 \note{ Actually the proof in OB is probably harder than necessary. 
 I might want to take this opportunity to write a new one.
 
 A proof of the boundary Sobolev style inequality. We let $ \eta (t) $ be a cut off function which is 1 for $ t < \rho /2$ and 0 for $ t > \rho$. We apply the
 fundamental theorem of calculus and obtain that 
 $$
 \int _ { \sball x \rho } |u- \avg u x \rho |^ p \, dy = 
 \int _ { \locdom x \rho } p \frac { \partial u } { \partial x _n } |u- \avg u x \rho|^ { p-1} \sign(u)\eta + |u - \avg u x\rho | ^ p \frac{\partial \eta }{\partial y_n} \, dy 
 $$
We apply the H\"older inequality to the two terms on the right and obtain the
upper bound
 $$
 p\left ( \int _ {\locdom x \rho } \left | \frac { \partial u }{\partial y _n } \right| ^ { q } 
 	\, dy\right)^ { 1/q } 
\left ( \int _ { \locdom  x \rho } |u - \avg u x \rho | ^ { (p-1)q'}\, dy \right) ^ { 1-1/q} 
 + C \left( \int _ { \locdom x \rho } |u -\avg u x\rho |^ { p n / ( n -1 )} \, 
 dy \right ) ^ { (n-1)/n } 
 $$
 
If we have the relation $ (p-1) q / ( q-1) = nq/(n-q)$, or $p = q(n-1) / ( n-q)$, 
we may use (\ref{SoPo1}) to obtain
$$
\left ( \int _ { \locdom x \rho } |u |^ { (p-1)q/( q-1)} \, dy \right)  ^ 
{ 1/( q'(p-1))}
\leq C\left( \int _ { \locdom x \rho } |\nabla u |^ { q} \, dy \right) ^ { 1/q}
$$
If we have the relation $pn/ ( n-1) = nq/(n-q) $, then the estimate (\ref{SoPo1}) will
give us 
$$
\left( \int _ {\locdom x \rho } |u|^ {pn/ ( n-1)} \, dy \right ) ^ { 1/q} 
\leq C \left( \int _ { \locdom x\rho } |\nabla u |^ q \, \right) ^ { 1/q}.
$$
Simplifying gives we need $ p$ and $q$ related by $ 1/q= 1/2 + 1/(2p) $
which gives (\ref{BoPo}). 
}

To end this section, we recall the Lam\'e operator and show that the form for 
this operator with the mixed boundary condition 
satisfies the ellipticity condition (\ref{Ellipticity}) and 
coercivity condition (\ref{Coerce}). 
 Given two real-valued functions $ \mu$ and $\lambda$, the Lam\'e operator 
 is the operator with  coefficients
$$
a^ { ij } _ { \alpha \beta} (x) = \mu (x)( \delta _{ ij} \delta _ { \alpha \beta} 
+ \delta _ { j \alpha } \delta _ { i \beta }) +
 \lambda ( x) \delta_ { i \alpha } \delta _{ j \beta} 
$$
We will use $ \epsilon (u) $ to denote the {\em strain }or the 
symmetric part of the
gradient,
$$
\epsilon _\alpha ^i ( u ) 
= \frac 1 2 \left ( \frac { \partial u ^ i } {\partial x _ \alpha } + 
\frac { \partial u ^ \alpha }{ \partial x_i} \right).
$$
and then $ \sigma(u)$ will denote the {\em stress tensor }which is given by 
$$
\sigma^ i _ \alpha(u) = a ^ { ij }_{\alpha \beta } \frac { \partial u ^ \beta } 
{ \partial x _j } 
$$
or more compactly by 
$$
\sigma (u) = 2 \, \mu \,  \epsilon (u) + \lambda \, I \,  \div u 
$$
where $I$ is the $2\times2$ identity matrix. 
The functions $ \mu$ and $ \lambda$ are called the 
Lam\'e parameters and are  related to the 
elastic properties of the material. We require that $\lambda$ and $ \mu$ are 
bounded, measurable functions and that 
 $ \mu (x) - \lambda ^ - (x) \geq c> 0$ where $ \lambda ^ -$ 
denotes the negative part of the function $ \lambda$.  
With this assumption, we have the 
pointwise lower bound
\begin{equation}\label{Pointwise}
\sigma ^ i _ \alpha \frac { \partial u ^ \alpha } { \partial x_i } 
\geq c |\epsilon (u) |^2.
\end{equation}

We recall the following version of Korn's second inequality. 
Let $ \locdom x \rho  $ be one of  our local domains with star-center $x^*$.  
If $u$ is in $ \sobolev 2 1 ( \locdom x \rho)$ and $c  $ is  any
 constant vector in $ \reals ^2$, then 
\begin{equation} \label{Korn2}
\int _ {\locdom x \rho  } |\nabla u |^ 2 \, dy \leq C( \int _ { \locdom x \rho}
|\epsilon ( u) |^ 2 \, d y  +  \frac 1 { \rho ^2} \int_{ \ball {x^*}
  {\rho/2} } |u -c|^2\,dy   ) 
\end{equation}
The constant depends only on $M$.  This may be
established using the argument given in the monograph of
Ole\u\i nik, Shamaev, and Yosifian \cite[Theorem 2.10]{MR1195131}. 
It is clear that the ellipticity condition (\ref{Ellipticity}) 
follows from the pointwise
bound (\ref{Pointwise}) and (\ref{Korn2}). If $D$ is non-empty (and then the corkscrew 
condition implies $D$ has non-empty interior), the
coercivity condition (\ref{Coerce}) may also be 
found in Ole\u\i nik, Shamaev, and Yosifian 
\cite[Theorem 2.7]{MR1195131}, 
however the standard proof of this inequality seems to be by contradiction and thus
we cannot say anything about how the constant depends on the domain $ \Omega$ and the
boundary set $D$. 

All of our quantitative assumptions have been in terms of the constant $M$. 
The results 
below will be  of  two types. Many of the results will be local estimates 
which hold on 
scales $\rho \in ( 0, r_0)$. In these local estimates, 
the constant will depend only on $m$, $M$, 
and any $L^p$-indices that appear in the estimate. The remaining estimates will
depend on global properties of the domains such as the collection of coordinate
cylinders which cover the boundary or the constant in the
Poincar\'e inequality  (\ref{DMatters}). 
However, the constant may be chosen to be uniform 
 under small changes in the Lipschitz functions
which define the boundary. One exception is the constant in the  Korn
inequality which gives the coercivity condition (\ref{Coerce}) for the
Lam\'e system.  As noted above, 
the  standard proof is by contradiction and gives no
information about the dependence of the constant on the domain. 
In the study of the Neumann problem the global estimates for solutions
will depend on the estimate in our existence theorem, Theorem \ref{SolveN}. The proof
of this result uses the Fredholm theory and thus we have no information about
the behavior of the operators used to solve the Neumann problem. 

\section{Estimates for solutions of the mixed problem}
\label{Estimates}
In this section, we prove  estimates for solutions of the mixed problem.
 We will use the 
reverse H\"older technique of Giaquinta and Modica \cite {MR549962}. 
Before beginning the 
main argument, we introduce
two auxiliary functions which will arise when we prove Caccioppoli-type
 inequalities for 
solutions of the mixed problem (\ref{WMP}) when the data $f$ or $f_N$ is not zero.

In the following definitions, we assume that $ 0 < r< r_0$ so that we can
define $ \locdom x \rho$ and $ \sball x \rho$  for $ 0 <  \rho < r$. 
For $ 0 < \alpha < 2$,  we let $I _{ \alpha , r} f (x) $ denote the 
maximal fractional integral given by 
$$
I_{\alpha,r }  f(x) 
= \sup _ { 0 < \rho < r} \rho ^ { \alpha -2 } 
\int _ { \locdom x \rho } |f(y) | \, dy .
$$
Note that we have several choices for $ \locdom x \rho$ when $x$ lies in several 
coordinate rectangles. In defining $I_{ \alpha, r}$, we take the maximum 
 over the choices  for $ \locdom x \rho$ arising from 
our finite cover of the boundary by coordinate rectangles.  
We have that 
$ \rho ^ { \alpha -2 } \chi _ { \locdom x \rho } ( y ) 
\leq C(M) |x- y | ^ { \alpha -2}$ and it follows that 
$ I _{\alpha, r} f(x) \leq  C R_ \alpha |f| (x)$ where $ R_ \alpha$ is the 
standard Riesz potential. Thus, the Hardy-Littlewood-Sobolev theorem 
gives us that with $q$ and $p$ related by $1/q=1/p+\alpha/2$ and  $ 2/ ( 2- \alpha ) 
< p < \infty  $, 
we have
\begin{equation}
\label{MaxFrac}
\| I_ {\alpha , r}  f \| _ { L^ p ( \Omega ) } 
\leq C(p, M) \| f \| _ { L^ q ( \Omega) } .
\end{equation}
Next, if $ f_N$ is a function on $ \partial \Omega$, we define
$$
P_ rf_N(x) =
 \sup \{ \average _ {  \sball { \hat x } \rho   } |f_N|\, d\sigma :
 \dist(x, \partial \Omega)<  \rho < r \} 
$$
with the convention that the supremum of the empty set is zero. 
Recall that if $ x $ is in a coordinate rectangle $\cyl y r$, 
and $ x = ( x_ 1 , \phi(x_1) + te_2)$, we defined $ \hat x = (x_1, \phi(x_1))$. 
We also note that if $ \dist (x, \partial \Omega) \leq \rho$, then $ \Delta _ \rho ( \hat x)
= \partial \Omega \cap  \partial \locdom x \rho $ and if $ \dist ( x,
\partial \Omega ) > \rho$, 
then $\partial \Omega \cap  \partial \locdom x \rho =\emptyset$. 
Our Neumann data, $f_N$ is initially only defined on $N$. 
In the definition of $P_r$, we assume that $f_N$ has been extended 
to $ \partial \Omega$ by setting $f_N=0$ outside $N$. Following the argument 
in \cite[Section 3]{OB:2009}, for $ 1 < p  < \infty$, we may find a 
constant $ C= C(M,p, \Omega)$ 
so that we have the estimate
\begin{equation}
\label{Pmax}
\left( \int _ \Omega | P_rf | ^ { 2p } \, dy \right) ^ { 1/2p }
\leq C \left ( \int _ { \partial \Omega } |f|^ p \, d\sigma \right ) ^ { 1/p } 
\end{equation}
and the corresponding estimate for $ p = \infty$ is trivial. 
\note{ Should we have a local version of this estimate, with a constant depending 
only on $M$ and $p$. }

We are now ready to begin our estimates. We begin with a simple energy estimate. 
\begin{proposition}
\label{EnergyProp}
If $ f$ lies in the Hardy space $H^ 1_D( \Omega)$ and $ f_N = 0$, then 
the weak mixed problem (\ref{WMP}) has a unique solution $u$ which satisfies
$$
\int_ \Omega |\nabla u |^ 2\, dy \leq   C\| f\| ^ 2 _ { H^1 _D( \Omega)}.
$$
\end{proposition}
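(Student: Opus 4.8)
The plan is to get existence and uniqueness straight from the Lax--Milgram theorem, exactly as in the discussion following (\ref{WMP}), and then to read off the energy bound by inserting the solution itself as a test function. What has to be checked first is that $f\in H^1_D(\Omega)$ induces an element of $\sobolev 2{-1}_D(\Omega)$, so that (\ref{WMP}) makes sense with this datum; more precisely, I would first establish the bound
$$
|\langle f,\phi\rangle|\le C\,\|f\|_{H^1_D(\Omega)}\,\|\nabla\phi\|_{L^2(\Omega)},\qquad \phi\in\sobolev 2 1_D(\Omega),
$$
where $\langle f,\phi\rangle$ denotes the $H^1_D(\Omega)$--$BMO_D(\Omega)$ pairing, which on an atom $a$ agrees with $\int_\Omega a^\alpha\phi^\alpha\,dy$.

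To prove this, let $a$ be an atom for $\Omega$ and $D$ supported in a local domain $\locdom x\rho$, and let $\phi\in\sobolev 2 1_D(\Omega)$. If $\dist(\locdom x\rho,D)>0$, then $\avg a x\rho=\average_{\locdom x\rho}a\,dy=0$, so subtracting the constant $\avg\phi x\rho$ and using $\|a\|_{L^\infty(\Omega)}\le 1/|\locdom x\rho|$,
$$
\Big|\int_\Omega a^\alpha\phi^\alpha\,dy\Big|=\Big|\int_{\locdom x\rho}a^\alpha\big(\phi^\alpha-\avg\phi x\rho\big)\,dy\Big|\le\average_{\locdom x\rho}\big|\phi-\avg\phi x\rho\big|\,dy\le\|\phi\|_{*,D};
$$
if instead $\dist(\locdom x\rho,D)=0$, then $\avg a x\rho=\avg\phi x\rho=0$ and the identical chain gives the same bound. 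Summing over an atomic decomposition $f=\sum_i\lambda_i a_i$ and passing to the infimum over all decompositions yields $|\langle f,\phi\rangle|\le\|f\|_{H^1_D(\Omega)}\,\|\phi\|_{*,D}$; this is just the easy direction of the $H^1_D(\Omega)$--$BMO_D(\Omega)$ duality expressed through the atomic characterization (\ref{NewNorm}). Combining it with the embedding $\sobolev 2 1_D(\Omega)\subset BMO_D(\Omega)$---which is precisely the Poincar\'e inequality (\ref{Poincare}) with $p=2$, and is where the corkscrew condition on $D$ enters---gives $\|\phi\|_{*,D}\le C\|\nabla\phi\|_{L^2(\Omega)}$, hence the displayed estimate; in particular $f\in\sobolev 2{-1}_D(\Omega)$ with comparable norm.

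With $f$ now an admissible datum, the continuity (\ref{Bounded}) and the global coercivity (\ref{Coerce}) of $A$ on the Hilbert space $\sobolev 2 1_D(\Omega)$ let the Lax--Milgram theorem produce a unique $u\in\sobolev 2 1_D(\Omega)$ solving (\ref{WMP}) with this $f$ and $f_N=0$. Choosing $\phi=u$ in (\ref{WMP}) gives $A(u,u)=-\langle f,u\rangle$, so by (\ref{Coerce}) and the estimate above
$$
M^{-1}\int_\Omega|\nabla u|^2\,dy\le A(u,u)=-\langle f,u\rangle\le C\,\|f\|_{H^1_D(\Omega)}\,\|\nabla u\|_{L^2(\Omega)},
$$
and dividing by $\|\nabla u\|_{L^2(\Omega)}$ (the case $\nabla u\equiv 0$ being trivial) and squaring gives $\int_\Omega|\nabla u|^2\,dy\le C\|f\|_{H^1_D(\Omega)}^2$. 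The only step with real content is the first, and within it the only genuine analytic ingredient is the embedding $\sobolev 2 1_D(\Omega)\subset BMO_D(\Omega)$; everything else is atomic bookkeeping for $H^1_D$, an invocation of Lax--Milgram, and one line of Cauchy--Schwarz.
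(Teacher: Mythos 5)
Your proof is correct and is essentially the paper's argument: both rest on the same atom estimate via the Poincar\'e inequality (\ref{Poincare}) (equivalently, the embedding $\sobolev 2 1_D(\Omega)\subset BMO_D(\Omega)$) to show the datum acts boundedly on $\sobolev 2 1_D(\Omega)$, followed by Lax--Milgram and coercivity. The only difference is organizational---you bound the functional for general $f$ and solve once, while the paper solves for each atom and superposes---which does not change the substance.
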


\begin{proof} Let $a$ be an atom for the Hardy space $H^1_D( \Omega)$. 
Using the Poincar\'e inequality (\ref{Poincare}), it is easy to see that the
map $ u \rightarrow \int _{ \Omega } a^ \alpha u^ \alpha\, dy $ lies in the
dual of $ \sobolev 2 1 _D ( \Omega)$. As a consequence, the solution of the 
mixed problem (\ref{WMP}) with $f=a$ and $f_N=0$ will satisfy 
$\int _  \Omega |\nabla u |^2 \, dy \leq C$. The result for a general element
of $H^1_D( \Omega)$ follows easily from the result for an atom. 
\end{proof}

  The following theorem gives our main
estimate for solutions of the mixed  boundary value problem. We note that the 
estimates of this theorem hold for the extreme cases where $D = \emptyset$ or 
$\partial \Omega$.   In the arguments below, we will use the standard notation
$1/p' = 1 -1 /p$ to denote the conjugate exponent. 
\begin{theorem}
\label{CaccTheorem}
Let $ \Omega$ be a Lipschitz domain, let $D \subset \partial \Omega$ be a
(possibly  empty) 
set that satisfies the corkscrew condition (\ref{Corkscrew}), and 
suppose that $L$ 
is an elliptic operator which satisfies the conditions (\ref{CoeffBound}) and 
 (\ref{Ellipticity}). 
 Let $u$ be a weak solution of the mixed 
problem (\ref{WMP}) and assume that
$f \in L^p ( \Omega)$ and $f_N \in L^ { p_N} ( N) $ 
are functions.  We  may find a $ t_0 >2$ 
so that with $ t \in [2,t_0)$,  $ p \in (1,2)$ and $p_N \in (1,\infty)$, $x 
\in \bar \Omega$ and $ \rho \in (0, r_0)$ we have the estimate
\begin{eqnarray*}
\left( \average _ { \locdom x \rho } |\nabla u |^ t \, dy \right ) ^ { 1/t} 
&\leq& C \left [ \left ( \average _{ \locdom x { 2\rho } }
 |\nabla u |^2 \, dy \right ) ^ { 1/2} 
+ \left( \average _ { \locdom x { 2\rho } } 
I _ {p, {2\rho}} (|f|^ { p } ) ^ { t/p } \, dy \right) ^ { 1/t } \right. \\
& &\left. \qquad+ \left(\average_{ \locdom x { 2\rho} } 
P_{2\rho}(|f_N| ^ { p_N} ) ^ {t/p_N}\,dy \right) ^ { 1/t}\right].
\end{eqnarray*}
The exponent $t_0 $ depends on $M$, $p$, and $p_N$ and the constant $C = C(M,t,p,p_N)$. 
\end{theorem}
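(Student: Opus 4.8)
The plan is to follow the classical self-improving-integrability scheme of Giaquinta and Modica \cite{MR549962}: first prove a Caccioppoli (reverse Poincar\'e) inequality on the local domains, then convert it via the Sobolev--Poincar\'e inequality (\ref{SoPo1}) into a reverse H\"older inequality for $\nabla u$ with a forcing term built from $f$ and $f_N$, and finally apply Gehring's lemma \cite{FG:1973} to gain a little extra integrability.

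For the Caccioppoli step I would fix $x\in\bar\Omega$ and $\rho\in(0,r_0)$, choose a Lipschitz cutoff $\eta$ equal to $1$ on $\locdom x\rho$ and supported in $\locdom x{2\rho}$ (both formed from the same coordinate rectangle, with $|\nabla\eta|\le C\rho^{-1}$), and test the weak formulation (\ref{WMP}) with $\phi=\eta^2(u-c)$, where $c=\avg u x{2\rho}$. This $\phi$ lies in $\sobolev 2 1_D(\Omega)$ because $u$ does and $\eta^2$ is a bounded Lipschitz multiplier; moreover, when $\locdom x{2\rho}$ meets $D$ the choice $c=0$ is forced by the definition of $\avg u x{2\rho}$, so $\phi=\eta^2u$ still vanishes near $D$. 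Regrouping $A(u,\eta^2(u-c))$ in terms of $w=\eta(u-c)$ and invoking the weak equation, the energy $\int_\Omega a\nabla w\cdot\nabla w\,dy$ equals $-\langle f,\phi\rangle+\langle f_N,\phi\rangle_{\partial\Omega}$ plus a term bounded by $C\rho^{-2}\int_{\locdom x{2\rho}}|u-c|^2\,dy$. Applying the local ellipticity hypothesis (\ref{Ellipticity}) to $w$ on $\locdom x{2\rho}$ (with constant $0$, using $|w|\le|u-c|$) and restricting to $\locdom x\rho$ would then yield
\begin{equation*}
\int_{\locdom x\rho}|\nabla u|^2\,dy \;\le\; \frac{C}{\rho^2}\int_{\locdom x{2\rho}}|u-c|^2\,dy \;+\; C\bigl(|\langle f,\phi\rangle| + |\langle f_N,\phi\rangle_{\partial\Omega}|\bigr).
\end{equation*}

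Next I would bound the first term on the right by (\ref{SoPo1}) with exponents $2$ and $1$; this is where the corkscrew condition (\ref{Corkscrew}) is used, since it forces $u$ to vanish on a boundary arc of $\sigma$-measure $\gtrsim\rho$ whenever $\locdom x{2\rho}$ abuts $D$, so that the zero-trace form of (\ref{SoPo1}) applies with vanishing average. The data terms would be handled by H\"older followed by (\ref{SoPo1}) for $\langle f,\phi\rangle$ and by the boundary Poincar\'e inequality (\ref{BoPo}) for $\langle f_N,\phi\rangle_{\partial\Omega}$; Young's inequality peels off an absorbable term $\epsilon\average_{\locdom x{4\rho}}|\nabla u|^2\,dy$ (after a Jensen step, the relevant Sobolev exponents being $<2$), and the remaining pieces, after dividing by $|\locdom x\rho|$, are dominated --- up to constants and with all powers of $\rho$ cancelling by scale invariance --- by $\average_{\locdom x{2\rho}}I_{p,C\rho}(|f|^p)^{2/p}\,dy$ and $\average_{\locdom x{2\rho}}P_{C\rho}(|f_N|^{p_N})^{2/p_N}\,dy$, using that $\rho^{p-2}\int_{\locdom x\rho}|f|^p$ and $\average_{\sball x{2\rho}}|f_N|^{p_N}$ are controlled pointwise by $I_{p,C\rho}(|f|^p)$ and $P_{C\rho}(|f_N|^{p_N})$ on $\locdom x{2\rho}$. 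A standard hole-filling iteration over the radii then removes the $\epsilon$-term and produces a reverse H\"older inequality with gap (exponents $2$ and $1$) whose forcing function $g$ satisfies $g^t\sim I_{p,C\rho}(|f|^p)^{t/p}+P_{C\rho}(|f_N|^{p_N})^{t/p_N}\in L^1(\Omega)$ for $t$ slightly larger than $2$ by (\ref{MaxFrac}) and (\ref{Pmax}). Gehring's lemma \cite{FG:1973,MR549962}, in the version valid for the family of local domains, then furnishes a $t_0>2$ --- depending on $M$ through the reverse H\"older constant, and not exceeding the integrability exponents allowed by (\ref{MaxFrac}) and (\ref{Pmax}), hence depending also on $p$ and $p_N$ --- such that $\nabla u\in L^t_{\mathrm{loc}}$ for $t\in[2,t_0)$ with the asserted estimate, after relabeling the comparable radii $2\rho$, $4\rho$, $C\rho$.

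The hard part will be the boundary analysis hidden in this scheme: one must verify that $\phi$ is an admissible test function near $D$, that the corkscrew condition (\ref{Corkscrew}) genuinely supplies the vanishing-average hypotheses needed to invoke (\ref{SoPo1}) and (\ref{BoPo}) on local domains touching $D$, and that Gehring's lemma can be run on the local domains $\locdom x\rho$ even though, for a given $x$ and $\rho$, several such domains are available from different coordinate rectangles --- which forces one to keep a single coordinate rectangle fixed throughout each estimate and to check that the engulfing and doubling properties of the local domains still support the Calder\'on--Zygmund stopping-time argument underlying the lemma.
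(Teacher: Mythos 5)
Your proposal follows essentially the same route as the paper: test the weak formulation with $\eta^2(u-\avg u x {2\rho})$ (admissible precisely because $\avg u x {2\rho}=0$ when the local domain meets $D$), use (\ref{SoPo1}), (\ref{BoPo}) and the maximal operators $I_{p,r}$, $P_r$ with (\ref{MaxFrac}), (\ref{Pmax}) to produce a Caccioppoli/reverse H\"older inequality on the local domains, and then invoke the Giaquinta--Modica self-improvement to get $t_0>2$. The only real difference is internal: the paper keeps the cutoff-weighted gradient $\int\eta^2|\nabla u|^2$ on the left and retains the intermediate exponents $q,q_N<2$ in the data estimates (so the lower exponent is $s=\max(q,q_N)$ and the $\epsilon$-term is absorbed directly), whereas you apply Jensen to raise those terms to exponent $2$ and then remove the resulting $\epsilon\average_{\locdom x {4\rho}}|\nabla u|^2$ by an iteration over radii --- workable, but it obliges you to rerun the Caccioppoli argument at intermediate radii where (\ref{SoPo1}) as stated only doubles the local domain, a complication the paper's direct absorption avoids.
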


\begin{proof}
We let $u$ be a weak solution of the mixed problem (\ref{WMP}) 
with $f\in L^ p ( \Omega) $ and $f_N \in L^ { p_N}(N)$ 
 for some  $ p>1$ and $p_N >1$. Define $q$ by $ 1/q = 3/2-1/p$ 
and $q_N$ by $ 1/q_N = 1- 1/( 2p_N)$ and let $ s= \max (q, q_N) < 2$. 
We fix $ x\in \bar \Omega$, $ 0 < \rho < r _0$ and 
we claim the Caccioppoli inequality
\begin{multline}
\label{Cacc0}
\average _ { \locdom x \rho } |\nabla u |^2 \, dy 
\\
\leq C \left [ \left(\average_ {\locdom x { 4\rho} } 
|\nabla u |^ s \, dy \right) ^ { 2/s}
+ I_{p, 2\rho} (|f|^ p ) ^ { 2/p }(x)
+ P _{2\rho}(|f_N|^ { p_N} ) ^ { 2/ p _N}(x)\right]
\end{multline} 
The estimate of the theorem follows immediately from (\ref{Cacc0}) and
the reverse H\"older argument of Giaquinta and Modica (see 
\cite[Theorem 1.2, p.~122]{MG:1983}). To establish (\ref{Cacc0}),
we fix $ x\in \bar \Omega$ and $ \rho \in (0, r_0)$. We let $ \eta $ 
be a cutoff function which
is one on $ \locdom x \rho$,  zero on $ \Omega \setminus \locdom x { 2\rho }$,
and satisfies  $ |\nabla \eta | \leq C /\rho$. We let 
$E = \int _ { \locdom x {2\rho } } |u-\avg u x {2\rho} | \eta 
|\nabla \eta | | \nabla u | 
+ \rho^{- 2} |u - \avg u x {2\rho} |^ 2 \, dy .$
As our first step, we will establish the inequality
\begin{multline}
\label{Claim1}
\frac 1 {\rho ^ 2} \int _ { \locdom x { 2 \rho }}
\eta ^2|\nabla  u | ^ 2\, dy 
\leq 
C\left (  \frac 1 {\rho^2} E 
+   \left( \average_ { \locdom x { 4 \rho}} |\nabla u |^ s \, dy \right ) ^ { 2/s}
\right.
\\
\left. 
+ \left ( \frac 1 { \rho ^ { 2-p }} \int _ { \locdom x { 2\rho }}|f|^ p\,dy
\right) ^ { 2/p} 
+ \left( \average _{\partial \Omega \cap \locdom x {2\rho} } |f_N |^ { p_N} \, d\sigma 
\right) ^ { 2/p_N}\right).
\end{multline}
If we use (\ref{SoPo1}) and the Cauchy inequality with $ \epsilon$'s,   
the estimate  (\ref{Claim1}) implies (\ref{Cacc0}). 
Thus we turn to the proof of 
(\ref{Claim1}).

To establish (\ref{Claim1}), observe that thanks to our definition of 
$\avg u x \rho$, we have  $ v = \eta ^2 ( u - \avg u x {2 \rho } )
\in \sobolev 2 1 _D ( \Omega)$ whenever $u \in \sobolev 2 1 _ D ( \Omega)$.
Thus, from the product rule, our ellipticity assumption 
and the weak formulation of
the mixed problem, (\ref{WMP}), we obtain 
\begin{equation}
\label{One}
\begin{split}
\int_ \Omega \eta^ 2 |\nabla u |^ 2 \, dy 
&\leq  \int _ { \Omega } |\nabla ( \eta ( u - \avg u x {2 \rho} ) )|^2\, dy+CE
\\
& \leq  C(A(\eta( u - \avg u x {2\rho} ) , \eta (u - \avg u x {2\rho} ) ) +E)
\\
 &\leq  C( A(u,v) + E) 
\\
& \leq C( \int_{\partial \Omega} f^\alpha _N v^ \alpha \, d\sigma  - 
 \int_{ \Omega } f^ \alpha v^ \alpha \, dy + E)
\end{split}
\end{equation}

\note{ If $f \in L^p$, $p\geq 2$, 
   then we also have $f$ in $L^p $, $p<2$,  and the result applies. }
We claim that if $ 1 < p  <  2$,  we have 
\begin{equation}
\label{Three} 
\left | \frac 1 { \rho ^ 2} \int _ \Omega f ^ \alpha v ^ \alpha \, dy \right|
\leq  \left ( \average _ { \locdom x { 2\rho }}
 |\nabla u | ^ q \, dy \right ) ^ { 2/q} +
C\left ( \frac 1 { \rho ^ { 2-p } } 
\int _ { \locdom x { 2\rho } } |f |^ p \, dy\right) ^ { 2/p}.
\end{equation}
We also will need that if $ 1 < p _N \leq \infty $,  then
\begin{multline}
\label{Four}
\left | \frac 1 { \rho ^ 2} \int _N f_N ^ \alpha v^ \alpha \, d\sigma \right |
\\
\leq   \left ( \average_ { \locdom x { 2\rho}} 
|\nabla u | ^{q _N}\, dy \right) ^ { 2/q_N}
+ C\left( \average  _ { \partial \Omega \cap \partial \Omega _ { 2\rho } (x)}
 |f_N |^ { p_N} \, d\sigma \right) ^ 
{2/p_N }  .
\end{multline}
It is easy to see that (\ref{Claim1}) follows from (\ref{One}-\ref{Four}) and  
thus it remains to prove (\ref{Three}) and (\ref{Four}).

To establish (\ref{Three}), we use H\"older's inequality with $ 1< p <
2$   and then 
the Sobolev-Poincar\'e inequality (\ref{SoPo1}) to obtain that with
$1/q =3/2-1/p$ 
\begin{eqnarray*}
\left | \frac 1 {\rho^2} \int _ { \Omega } f^ \alpha v^ \alpha\, dy \right |
&\leq & \frac 1 { \rho ^2} \left( \int _ { \locdom x { 2\rho } } |f |^ { p }
 \,dy
\right) ^ {1/p}
 \left( \int _ { \locdom x { 2\rho }}|u - \avg u x { 2 \rho } | ^ {p'} 
 \, dy \right ) ^ { 1/ p'} \\
&\leq  & \frac C { \rho ^{2/q -1 + 2/p}} 
\left( \int _ { \locdom x { 2\rho } }|f |^ p \, dy \right) ^ { 1/p } 
\left( \int _ { \locdom x { 4\rho} }|\nabla u |^ q \, dy \right) ^ { 1/q} .
\end{eqnarray*}
Now (\ref{Three}) follows easily.  

To establish (\ref{Four}), we use H\"older's inequality and the boundary Poincar\'e
inequality (\ref{BoPo}) to obtain with $ 1 < p_N <\infty $ and 
$ 1/q_N = 1- 1 / ( 2p_N)$, that 
\begin{eqnarray*}
\lefteqn {\left | \frac 1 { \rho^2} \int _ N f_N^ \alpha v ^ \alpha \, d\sigma \right | } 
\\
& \leq & \frac 1 { \rho^ 2} 
\left ( \int _ { \partial \Omega \cap \partial \locdom x { 2\rho }  } 
|f_N |^ { p_N} \, d\sigma \right) ^ {  1 /{p_N}}
  \left (\int _  {\partial \Omega \cap \partial \locdom x { 2\rho} } 
  |u- \avg u x { 2\rho }| ^ { p'_N} \, dy \right)^ {  1 /{p'_N}} \\
&\leq & \frac C { \rho ^ { 1/p_N  + 2 / q_N} } 
\left ( \int _{ \partial \Omega \cap \partial \locdom x { 2\rho }}
 |f_N | ^ {p _N} \, d\sigma \right ) ^ { 1/ p_N } 
\left( \int _ {\locdom x { 4 \rho }  } |\nabla u |^ { q_N} \, dy\right )
 ^ {  1/ { q _N} }  
\end{eqnarray*}
and now (\ref{Four}) follows. 
\note{ If $p_N = \infty$, then the argument still holds. The only reason to exclude 
$p_N =\infty$ is to avoid having to write a separate line to acccount for the fact
that the $L^ \infty$ norm is not an integral.}
\end{proof}

\begin{remark} 
The argument above also gives us a Caccioppoli inequality for solutions.
Let $u$ be a solution of (\ref{WMP}) with $ f =0$ and $ f_N \in L^ 2(N)$, fix
$ x $ and $ \rho$ and let $v$ be as in (\ref{One}).
Since $v$ vanishes outside 
$ \locdom x {2\rho}$, we may use (\ref{BoPo}) and H\"older's inequality to
obtain
$
\int _{\partial  \Omega } |v|^2 \, d\sigma \leq C \rho \int _{ \Omega }
 |\nabla v |^ 2 \, dy .
$
If $ \dist(x, \partial \Omega ) > 2\rho$, the boundary term  in (\ref{One}) 
is zero.  When
$\dist(x, \partial \Omega ) < 2 \rho$,   we estimate the 
 boundary term in (\ref{One}) using the Cauchy
inequality with $\epsilon$'s and the above observation to obtain 
$$
|\int _N f_N^ \alpha v^ \alpha \, d\sigma|
 \leq C(\frac \rho \epsilon \int _{ \sball x { 2\rho } } |f_N |^2 \, d\sigma
 +E) 
 + \epsilon \int _ \Omega \eta ^ 4|\nabla u |^2 \, dy  .
 $$
Using this estimate in (\ref{One}) and the Cauchy inequality with $ \epsilon$'s 
gives 
\begin{equation}
\label{Caccioppoli}
\int _ { \locdom x \rho } |\nabla u |^ 2 \, dy \leq C(  \frac 1 { \rho ^ 2}
\int  _ { \locdom x { 2\rho }}
|u - \avg u x {2\rho} | ^ 2 \, dy  + 
\rho\int _ {\partial \Omega \cap \partial \locdom x  { 2 \rho} } f_N^2 \, d\sigma).
\end{equation}
\end{remark}

\note{ 
We give a more detailed proof.

We first note that a variant of  the boundary sobolev inequality  (\ref{BoPo}) 
and then an application of H\"older's inequality 
gives
$$
\int_ { \partial \Omega } |v|^2 \, d\sigma  \leq C\left (
 \int _ { \Omega } |\nabla v|^{ 4/3}
\, dy \right) ^ { 3/2} \leq C \rho \int _ \Omega |\nabla v|^2 \, d\sigma 
$$

To prove this we use the product rule and ellipticity to obtain
\begin{eqnarray*}
\int \eta^ 2 |\nabla u |^ 2 \, dy 
& \leq & C (\int _ { \Omega } |\nabla ( \eta ( u - \avg u x \rho ) )|^2\, dy+E)
\\
& \leq & C(A(\eta( u - \avg u x \rho ) , \eta (u - \avg u x\rho ) ) +E)
\\
& \leq & C( A(u,v) + E) 
\\
& \leq &C( \int f^\alpha _N v^ \alpha \, d\sigma + E)
\\
& \leq & C( ( \int _ { \sball x { 2\rho }} |f_N|^2 \, d\sigma ) ^ { 1/2} 
(\int _ { \sball x { 2\rho } } |v|^ 2 \, d\sigma ) ^ { 1/2} +E)  \\
& \leq & 
C(( \int _ { \sball x { 2\rho }} |f_N|^2 \, d\sigma ) ^ { 1/2} 
 \rho ^ { 1/2} ( \int _ \Omega |\nabla v |^ 2\, dy ) ^ { 1/2 } +E) \\
 & \leq & 
C(\frac \rho \epsilon \int _ { \sball x { 2\rho } } |f_N|^2 \, d\sigma
 + \epsilon \int _ \Omega  \eta ^4|\nabla u |^ 2\, dy 
 +E) 
\end{eqnarray*} 
We will use the Caccioppoli inequality 
with $f_N$ nonzero when we construct the Green
function for the Neumann problem. 

The proof of CaccTheorem may allow larger values of $p_N$. Can we let $p_N < t$? 
}
\begin{corollary}
\label{HolderCor}
Let $ \Omega$,  $D$, and $L$ be as in Theorem \ref{CaccTheorem}. 
Let $u$ be a  solution of the weak mixed problem (\ref{WMP}) with
$f=0$ on $\locdom x { 2\rho } $ and $f_N \in L^2 ( \partial \Omega)$. 
There is a constant $C$ and $ \gamma_0 >0$ so that for
$ \gamma$ in $ ( 0, \gamma _0)$, we have 
\begin{gather}
\begin{split} 
\label{Holder}
|u(y) - u ( z) | \leq C \frac {|y-z|^ \gamma}{ \rho ^ \gamma}
( \left (\average_{ \locdom x { 2\rho } } |u - \avg u x {2\rho } |^ 2\, dy
\right)^{1/2}  \\ 
 + \left ( \rho \int _ { \partial \Omega \cap \partial \locdom x { 2\rho} } |f_N |^ 2 \,d\sigma \right) ^
 { 1/2 } ) 
, \  y, z \in \locdom x \rho
\end{split} 
 \\
\label{uBound}
|u ( x ) | \leq   C (  \left( \average _ { \locdom x {2\rho } }
 |u |^ 2 \, dy \right) ^ { 1/2}
 +\left ( \rho \int _ { \partial \Omega \cap \partial \locdom x { 2\rho} } 
 |f_N |^ 2 \,d\sigma \right) ^
 { 1/2 })
\end{gather}
The constant  $ \gamma_0$ depends only on $ M$ and $C$ depends 
on $M$ and $ \gamma$.  
\end{corollary}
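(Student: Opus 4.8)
The plan is to deduce both inequalities from a single Morrey-type decay estimate for the Dirichlet energy of $u$. First, (\ref{uBound}) is a consequence of (\ref{Holder}): since $x\in\locdom x\rho$ and $\diam\locdom x\rho\le C\rho$, taking $y=x$ in (\ref{Holder}) and averaging in $z$ over $\locdom x\rho$ gives
\[
\Big| u(x)-\average_{\locdom x\rho}u\,dz\Big|\le C\Big[\Big(\average_{\locdom x{2\rho}}|u-\avg u x{2\rho}|^2\,dy\Big)^{1/2}+\Big(\rho\int_{\partial\Omega\cap\partial\locdom x{2\rho}}|f_N|^2\,d\sigma\Big)^{1/2}\Big],
\]
and, whether or not $\locdom x{2\rho}$ meets $D$, one has $\average_{\locdom x{2\rho}}|u-\avg u x{2\rho}|^2\,dy\le\average_{\locdom x{2\rho}}|u|^2\,dy$, while $|\average_{\locdom x\rho}u\,dz|\le(\average_{\locdom x\rho}|u|^2\,dz)^{1/2}\le C(\average_{\locdom x{2\rho}}|u|^2\,dy)^{1/2}$; combining these gives (\ref{uBound}). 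Thus the real content is (\ref{Holder}), and for this it suffices to establish, for some $\gamma_0=\gamma_0(M)>0$ and every $\gamma\in(0,\gamma_0)$, the scale-invariant decay
\[
\int_{\locdom x r}|\nabla u|^2\,dy\le C\Big(\frac r\rho\Big)^{2\gamma}\Big(\int_{\locdom x{2\rho}}|\nabla u|^2\,dy+\rho\int_{\partial\Omega\cap\partial\locdom x{2\rho}}|f_N|^2\,d\sigma\Big),\qquad 0<r<\rho
\]
(here, and below, the radii are adjusted by absolute factors, which we suppress).

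To prove the decay, fix $0<r<\rho/4$; the range $r\in[\rho/4,\rho)$ is trivial. Let $t>2$ be the self-improved exponent furnished by Theorem \ref{CaccTheorem}, applied with the data $f=0$ (legitimate since $f$ vanishes on $\locdom x{2\rho}$, so its contribution drops out) and with $p,p_N$ fixed in $(1,2)$, so that $t_0=t_0(M)$. By H\"older's inequality with exponents $t/2$ and $(t/2)'$ and then the monotonicity of the integral,
\[
\int_{\locdom x r}|\nabla u|^2\,dy\le|\locdom x r|^{1-2/t}\Big(\int_{\locdom x r}|\nabla u|^t\,dy\Big)^{2/t}\le C\,r^{2-4/t}\Big(\int_{\locdom x\rho}|\nabla u|^t\,dy\Big)^{2/t};
\]
the genuinely small factor $|\locdom x r|^{1-2/t}\approx r^{2-4/t}$ is the source of the decay. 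Applying the reverse H\"older estimate of Theorem \ref{CaccTheorem} once, at the scale $\rho$, yields
\[
\Big(\int_{\locdom x\rho}|\nabla u|^t\,dy\Big)^{1/t}\le C\rho^{2/t}\Big[\Big(\average_{\locdom x{2\rho}}|\nabla u|^2\,dy\Big)^{1/2}+\Big(\average_{\locdom x{2\rho}}P_{2\rho}(|f_N|^{p_N})^{t/p_N}\,dy\Big)^{1/t}\Big];
\]
the Neumann term is estimated by the maximal-function bound (\ref{Pmax}) (applied to $|f_N|^{p_N}$ with exponent $2/p_N>1$), which, after localizing $P_{2\rho}$ to the boundary interval near $x$, gives the bound $C\rho^{-1/2}(\int_{\partial\Omega\cap\partial\locdom x{2\rho}}|f_N|^2\,d\sigma)^{1/2}$. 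Substituting and collecting the powers of $r$ and $\rho$ yields the displayed decay with $2\gamma_0:=2-4/t_0$.

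Given the decay, the H\"older estimate (\ref{Holder}) follows from Morrey's two-dimensional Dirichlet growth lemma, run with the local domains. For $y\in\locdom x\rho$ and $0<s<\rho$, the Poincar\'e inequality (\ref{Poincare}) with $p=2$ on the nested local domains $\locdom y{2^{-j}s}$ (the corkscrew condition being what makes (\ref{Poincare}) hold uniformly when these meet $D$) gives $|\avg u y s-\avg u y{2^{-k}s}|\le C\sum_{j=0}^{k-1}2^{-j}s\,(\average_{\locdom y{2^{-j+1}s}}|\nabla u|^2\,dy)^{1/2}$, and by the decay estimate (centered at $y$, which only enlarges the outer radius by an absolute factor) this is a convergent geometric series; letting $k\to\infty$ exhibits a continuous representative of $u$ on $\locdom x\rho$ and shows $|u(y)-\avg u y s|\le C(s/\rho)^\gamma(\int_{\locdom x{2\rho}}|\nabla u|^2\,dy+\rho\int_{\partial\Omega\cap\partial\locdom x{2\rho}}|f_N|^2\,d\sigma)^{1/2}$. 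Taking $s=|y-z|$ and comparing the averages of $u$ over $\locdom y{|y-z|}$ and $\locdom z{|y-z|}$ inside $\locdom y{2|y-z|}$ gives (\ref{Holder}) with the Dirichlet integral $(\int_{\locdom x{2\rho}}|\nabla u|^2\,dy)^{1/2}$ in place of $(\average_{\locdom x{2\rho}}|u-\avg u x{2\rho}|^2\,dy)^{1/2}$; replacing the former by the latter is a single application of the Caccioppoli inequality (\ref{Caccioppoli}). With (\ref{Holder}) established, (\ref{uBound}) follows as in the first paragraph. The constant $\gamma_0$ depends only on $M$ (through $t_0$), and $C$ on $M$ and $\gamma$ (the latter through the geometric series).

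The step I expect to be the main obstacle is producing a \emph{positive} decay rate for $\int_{\locdom x r}|\nabla u|^2\,dy$, and this is exactly where two dimensions is essential: Theorem \ref{CaccTheorem} only yields $\nabla u\in L^t$ for some $t>2$, with no quantitative control of $t-2$, so the gain $(r/\rho)^{2-4/t}$ can be arbitrarily small; Morrey's growth lemma nevertheless converts any such positive gain into H\"older continuity precisely because $n-2=0$ in the plane, whereas for $n\ge3$ one would need $\int_{B_r}|\nabla u|^2$ to decay faster than $r^{n-2}$, which is unavailable for general systems (indeed, H\"older continuity of solutions of systems fails in higher dimensions, by the example of De Giorgi). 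The one other point requiring care is the passage of the Neumann datum through the maximal operator $P_{2\rho}$ of Theorem \ref{CaccTheorem}, which is handled by (\ref{Pmax}) together with $f_N\in L^2(\partial\Omega)$.
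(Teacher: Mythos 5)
Your overall strategy is essentially the paper's: both arguments run on Theorem \ref{CaccTheorem}, the Caccioppoli inequality (\ref{Caccioppoli}), and a Morrey-type step, and you merely replace the paper's direct use of Morrey's inequality (\ref{Morrey}) on the local $L^t$ gradient bound by the equivalent detour through decay of the Dirichlet integral and the Dirichlet growth (telescoping Poincar\'e) lemma. Your derivation of (\ref{uBound}) from (\ref{Holder}) is fine, and in fact avoids the paper's case distinction when $\avg u x \rho =0$.

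There is, however, a genuine slip at the key localization step. You apply Theorem \ref{CaccTheorem} at scale $\rho$ centered at $x$ and assert that the $f$-term drops out because $f=0$ on $\locdom x {2\rho}$. It does not: the relevant term is $\average_{\locdom x {2\rho}} I_{p,2\rho}(|f|^p)^{t/p}\,dy$, and for $y\in\locdom x {2\rho}$ the maximal fractional integral $I_{p,2\rho}(|f|^p)(y)$ takes a supremum of $s^{p-2}\int_{\locdom y s}|f|^p$ over all $s<2\rho$, so it sees $f$ on a neighborhood of size comparable to $2\rho$ around $\locdom x {2\rho}$, where $f$ is not assumed to vanish; the same remark applies to $P_{2\rho}(|f_N|^{p_N})(y)$, whose boundary intervals can leave $\partial\Omega\cap\partial\locdom x {2\rho}$, so your localization of the Neumann term to that set is not automatic either. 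This is exactly the point the paper is careful about: it applies Theorem \ref{CaccTheorem} only at radii $s<c\rho$ around points $y\in\locdom x {3\rho/2}$, with $c=c(M)$ chosen so that $\locdom y {2s}$ (and the boundary intervals entering $P_{2s}$) remain inside $\locdom x {2\rho}$, which is why the $f$-term vanishes there. Your argument is repaired by the same device (run the decay only at scales at most $c\rho$, the remaining range being trivial), so the architecture survives, but as written the step fails. Two smaller points: controlling the Neumann term with only $f_N\in L^2$ forces $t<4$ (whether via (\ref{Pmax}) or a direct computation of the average of $P_{2\rho}(|f_N|^{2})^{t/2}$ near the boundary), so the exponent is $\gamma_0=\min(1-2/t_0,1/2)$ rather than $1-2/t_0$; and (\ref{Pmax}) as stated is a global estimate whose constant depends on $\Omega$, so to obtain $C=C(M,\gamma)$ you should bound the $P$-term by the direct local computation rather than by (\ref{Pmax}).
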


\begin{proof} 
We let $u$ be a solution of (\ref{WMP}) with $f=0$ in $\locdom x {2\rho}$.  
We observe that there is a constant $c$ so that if 
$y \in \locdom x {3\rho/2}$, $0 < s< c\rho $, then 
$ \locdom y { 2s} \subset \locdom x { 2\rho }$ and we have 
$ I_ { \alpha, 2s} f =0$.
Thus if $ \locdom y s \subset \locdom x \rho$ and $ s < c\rho$, 
the main estimate of Theorem \ref{CaccTheorem}  and (\ref{MaxFrac})
imply   that for $ t < \min (4, t_0)$
\begin{equation} 
\label{RH} 
\left( \average _ { \locdom y s } |\nabla u |^ t \, dy \right) ^ { 1/t}
\leq C( \left( \average _ { \locdom y {2s} } |\nabla u | ^ 2 \, dy \right)^ { 1/2}
+ \frac \rho s \left( \average _ { \partial \Omega \cap \partial \locdom x { 2\rho} }
|f_N |^2 \, d\sigma \right )  ^{1/2} ).
\end{equation}
Using (\ref{RH}) with the Caccioppoli inequality (\ref{Caccioppoli}) 
and Morrey's inequality 
(\ref{Morrey}), we obtain the H\"older estimate (\ref{Holder}) with $ \gamma_0
= \min(  1- 2/t_0, 1/2)$ where $t_0$ is as in Theorem
\ref{CaccTheorem}.  

To obtain the estimate (\ref{uBound}), we write 
$$
|u(x) | \leq |u(x) - \avg u x \rho | + |\avg u x \rho|, 
$$
 use the H\"older estimate (\ref{Holder}) 
to bound the first term on the right and then the H\"older inequality. 
Note that this 
works even when $ \avg u x \rho =0$ since in this case, $u$ vanishes on a nearby 
piece of the boundary.
\end{proof}

We observe a version of the Green identity for solutions of 
the weak mixed problem (\ref{WMP}) and the corresponding problem for the
adjoint operator, $L^*$. If $ u $ satisfies $ Lu =f$ with  
$ \partial u /\partial \nu = f_N$ and
$w$ is a solution of the adjoint problem $ L^* w =g$ with 
$\partial w /\partial \nu = g_N$ and suppose
that $f$ and $g$ are in $L^p( \Omega)$  and $ f_N$ and $g_N$ are
in $L^ {p}( \partial \Omega)$ for some $p>1$. Then,  we have 
\begin{equation}
\label{Dual}
\int _ \Omega u^ \alpha g^ \alpha\, dy -
\int _{N } u ^ \alpha g^ \alpha _N \, 
d\sigma   = \int _ \Omega w^ \alpha f^ \alpha\,d y
- \int _N w^ \alpha f_N ^ \alpha \, d\sigma  .
\end{equation}
The identity (\ref{Dual}) follows easily since both sides of (\ref{Dual}) are
equal to $A(u,v)$.

The following Corollary is only valid  when $D \neq \emptyset$. 
For the Neumann problem,  there are non-constant solutions
to the 
 homogeneous problem and hence the estimates (\ref{PrimalEstimate}) and (\ref{DualEstimate}) 
 cannot hold. 
We will give a version
of this result for the Neumann problem in section \ref{Neumann}. 

\begin{corollary}
\label{TwoEstimates}
Suppose that $D$ is a non-empty 
subset of $ \partial \Omega$  that satisfies
the corkscrew condition.  
Let   $L$,  $ \Omega$,  and $t_0$   be as in Theorem \ref{CaccTheorem}, 
suppose that $t \in (2, t_0)$, define $r$ by 
 $ 1/r = 1/2 + 1/t$ and let $ r_N = t/2$. 
 If $u$ is a weak solution of the mixed problem 
(\ref{WMP})
 with 
$f_N  =0$ and $f \in L^ r ( \Omega)$, then we have 
\begin{equation}
\label{PrimalEstimate}
\| \nabla u \| _ { \lp t ( \Omega )} \leq C  \| f\|_ { \lp r( \Omega) } . 
\end{equation}

If $u$ is a solution of (\ref{WMP}) with 
$f\in \sobolev {t'} { -1} _ D ( \Omega)$ and $ f_N=0$, then
we have  the estimate
\begin{equation}
\label{DualEstimate}
\|u \|_ { L^ { r'} ( \Omega) } \leq 
C\|f\| _ { \sobolev  { t' } { -1} _D ( \Omega)}.
\end{equation}
The constant $C$ depends on $ M$, $t$,  and $ \Omega$. 
\end{corollary}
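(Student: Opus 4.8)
The plan is to derive the two estimates by a standard duality/interpolation argument built on the reverse-Hölder gain of Theorem \ref{CaccTheorem} together with the global Poincaré inequality (\ref{DMatters}) and the Hardy--Littlewood--Sobolev bound (\ref{MaxFrac}) for the maximal fractional integral. For the first estimate (\ref{PrimalEstimate}), I would take $u$ the weak solution with $f_N=0$, $f\in L^r(\Omega)$ where $1/r=1/2+1/t$. Apply Theorem \ref{CaccTheorem} with exponent $t$, with the base point ranging over $x\in\bar\Omega$ and with $\rho=r_0$ (so that $\locdom x{2\rho}$ covers a controlled portion of $\Omega$); raise to the $t$-th power, integrate in $x$ over $\Omega$, and use Fubini to convert the local averages $\average_{\locdom x{2\rho}}|\nabla u|^2$ into a bound by $\|\nabla u\|_{L^2(\Omega)}$, and the term involving $I_{p,2\rho}(|f|^p)^{t/p}$ into $\|I_{p,2\rho}(|f|^p)\|_{L^{t/p}}^{1/p}$. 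Choosing $p$ so that $1/p=1/r-1/t+1/2$, i.e. $p$ determined by the HLS relation making $\|I_{p,2\rho}(|f|^p)\|_{L^{t/p}}^{1/p}\le C\|f\|_{L^r}$, gives $\|\nabla u\|_{L^t(\Omega)}\le C(\|\nabla u\|_{L^2(\Omega)}+\|f\|_{L^r(\Omega)})$. Finally, the energy estimate from coercivity (\ref{Coerce}), together with (\ref{DMatters}) to control $\|f\|_{\sobolev{2}{-1}_D}$ by $\|f\|_{L^r}$ (here is where $D\ne\emptyset$ and the corkscrew condition enter, via the Poincaré/embedding inequality), bounds $\|\nabla u\|_{L^2(\Omega)}\le C\|f\|_{L^r(\Omega)}$, completing (\ref{PrimalEstimate}).

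For (\ref{DualEstimate}), I would argue by duality against the adjoint problem. Given $g\in L^{r}(\Omega)$ (the predual index to $L^{r'}$), let $w$ solve the adjoint weak mixed problem (\ref{AWMP}) with right-hand side $g$ and $g_N=0$. By the estimate (\ref{PrimalEstimate}) already proved — applied to $L^*$, which satisfies the same hypotheses — we get $\|\nabla w\|_{L^t(\Omega)}\le C\|g\|_{L^r(\Omega)}$. Now for $u$ solving (\ref{WMP}) with data $f\in\sobolev{t'}{-1}_D(\Omega)$ and $f_N=0$, the Green-type identity (here just the definition of the dual pairing, since both sides equal $A(u,w)$) gives $\int_\Omega u^\alpha g^\alpha\,dy = A(u,w) = \langle f,w\rangle$, so $|\int_\Omega u\cdot g\,dy|\le \|f\|_{\sobolev{t'}{-1}_D}\|w\|_{\sobolev{t}{1}_D} = \|f\|_{\sobolev{t'}{-1}_D}\|\nabla w\|_{L^t(\Omega)}\le C\|f\|_{\sobolev{t'}{-1}_D}\|g\|_{L^r(\Omega)}$. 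Taking the supremum over $g$ in the unit ball of $L^r(\Omega)$ and using that $L^{r'}=(L^r)^*$ yields (\ref{DualEstimate}).

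The step I expect to require the most care is the Fubini/covering bookkeeping in the first part: because $\locdom x\rho$ is not literally a ball and there may be several choices of coordinate rectangle at a boundary point, one must check that $\int_\Omega \average_{\locdom x{2\rho}}|\nabla u|^2\,dy\,dx \le C\|\nabla u\|_{L^2(\Omega)}^2$ with a constant depending only on $M$ and the finite cover, and similarly that the $I_{p,2\rho}$ and $P_{2\rho}$ terms reduce cleanly to the global bounds (\ref{MaxFrac}) and (\ref{Pmax}); one also has to track the relations among $p$, $q=t'$, $r$, $t$ so that the HLS exponent condition $2/(2-p)<t/p$ and $1/r = 1/p+\,\text{(something)}$ are simultaneously met within the admissible range $t\in(2,t_0)$. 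The only genuinely essential use of $D\ne\emptyset$ is in passing from the energy identity to an a priori $L^2$-gradient bound in terms of the $L^r$ (or negative Sobolev) norm of the data, which fails for the Neumann problem and is exactly why the corollary is stated with that restriction.
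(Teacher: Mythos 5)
Your proposal is correct and follows essentially the same route as the paper: an energy bound $\|\nabla u\|_{L^2}\leq C\|f\|_{L^r}$ obtained from coercivity together with (\ref{DMatters}) (the only place $D\neq\emptyset$ and the corkscrew condition are used), then Theorem \ref{CaccTheorem} applied at the fixed scale $r_0$ combined with (\ref{MaxFrac}) and a covering/summation step to get (\ref{PrimalEstimate}), and finally (\ref{DualEstimate}) by duality against the adjoint problem via (\ref{Dual}) and (\ref{PrimalEstimate}) for $L^*$, exactly as in the text. One small correction: your formula $1/p=1/r-1/t+1/2$ forces $p=1$, which is outside the admissible range of Theorem \ref{CaccTheorem}; in fact any fixed $p\in(1,r)$ works, since the relation $1/r=1/2+1/t$ makes the Hardy--Littlewood--Sobolev step return exactly $\|f\|_{L^r(\Omega)}$ regardless of the choice of $p$.
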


\begin{proof}
Using the estimate (\ref{DMatters}), it follows that the 
map 
$ v \rightarrow  \lambda (v) = \int _ \Omega f^ \alpha v^ \alpha\, dy $ is 
an element in  $ \sobolev 2 { -1} _D ( \Omega)$ with the bound $
\|\lambda \|_ { \sobolev 2 { -1} _D ( \Omega) } 
\leq C r_0 ^ { 2/r'}  \| f \|_ { L^ r ( \Omega)}  $.
As a consequence, if $u$ is a weak solution of the mixed problem
(\ref{WMP}) with $f \in L^ r ( \Omega)$, $ r > 1 $ and $ f_N =0$, 
 we have the energy estimate
\begin{equation}
\label{Energy}
\|\nabla u \| _ { L^ 2 ( \Omega) } \leq C 
r_0 ^ { 2/r'} \| f \|_ { L^ r ( \Omega)} . 
\end{equation}

We begin with the main estimate of Theorem \ref{CaccTheorem} on domains 
$\locdom x  { r_0} $.   We choose $ p$ with $ p \in (1,r) $ 
 and apply the estimate  (\ref{MaxFrac})  
to obtain
$$
\left( \int _ { \locdom x {r_0}  } |\nabla u |^ t \, dy \right)^ { 1/t}
\leq C ( r_ 0 ^ { 2/t-1}  \left ( \int_ { \locdom x { 2r_0}} | \nabla u | ^ 2\, dy
\right) ^ { 1/2 }  + \| f \|_{ L^ r ( \Omega) }).
$$
We use the energy estimate (\ref{Energy}) 
and observe that $1 - 2/t = 2/r' $.  Finally, 
we  cover $ \Omega$ with domains $ \locdom x {r_0}$ 
 and use Minkowski's inequality to obtain the
first estimate  (\ref{PrimalEstimate}).

The second estimate (\ref{DualEstimate}) for solutions of $L$ follows 
by duality from  (\ref{Dual}) and the first
estimate (\ref{PrimalEstimate})  for solutions of the adjoint operator $L^*$. 
If $ A^*$
is the form for $L^*$, then $ A^*(u,v) = A(v,u)$, thus it is clear that $L^*$ 
satisfies the hypotheses of this theorem exactly when $L$ does. 
\end{proof}
\note{ The proof above was edited to get rid of an aborted attempt to
include the case of non-zero Neumann data.
}

Our next estimates are valid only when $ D \neq \emptyset$. The estimates may fail 
for the Neumann problem due to non-uniqueness. 
\begin{corollary} 
\label{AtomicCorollary} Suppose that $ D\neq \emptyset$ and satisfies the corkscrew 
condition. 
If $u$ is a solution of the weak mixed problem  (\ref{WMP}) with $ f$ 
an atom and $f_N =0$, then we have
$$
\| u \| _ { L^ \infty ( \Omega ) } \leq C.
$$
\end{corollary}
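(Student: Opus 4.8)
The plan is to prove a uniform pointwise bound $|u(y)| \leq C$ for every $y \in \bar\Omega$, treating separately the region near $\supp a$ — which lies in some local domain $\locdom{x_0}{\rho_0}$ — and the region away from it. Two facts one would like to use immediately turn out to be too weak by themselves: that $u \in L^{r'}(\Omega)$ with controlled norm, by applying Corollary \ref{TwoEstimates} with an atom in place of $f$; and that $u \in BMO_D(\Omega)$ with $\|u\|_{*,D} \leq C\|\nabla u\|_{L^2(\Omega)} \leq C$, by Proposition \ref{EnergyProp} and the Poincar\'e inequality (\ref{Poincare}). These control only the oscillation of $u$ on a single local domain, and chaining such bounds from $\locdom{x_0}{\rho_0}$ out to $D$ costs a factor $\log(d/\rho_0)$ — precisely the logarithmic singularity of $u$ at $\supp a$. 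The point of the proof is therefore to use the cancellation $\avg a{x_0}{\rho_0} = 0$ (or, when $\dist(\locdom{x_0}{\rho_0},D) = 0$, the vanishing of $u$ on $D$) to show that $u$ in fact decays away from $\supp a$.

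Near $\supp a$, one uses Theorem \ref{CaccTheorem} together with Morrey's inequality (\ref{Morrey}). Since $f = a$ is supported in $\locdom{x_0}{\rho_0}$ and $\|a\|_{L^\infty} \leq 1/|\locdom{x_0}{\rho_0}|$, a direct computation gives $I_{p,4\rho_0}(|a|^p)(z) \leq C\rho_0^{-p}$ for every $z$; hence the estimate of Theorem \ref{CaccTheorem} at scale $2\rho_0$, combined with $\|\nabla u\|_{L^2(\Omega)} \leq C$ and $f_N = 0$, yields $(\int_{\locdom{x_0}{2\rho_0}}|\nabla u|^t)^{1/t} \leq C\rho_0^{2/t-1}$ for any fixed $t \in (2,t_0)$, and (\ref{Morrey}) then gives $\|u - \avg u{x_0}{\rho_0}\|_{L^\infty(\locdom{x_0}{\rho_0})} \leq C$ (and likewise on $\locdom{x_0}{C\rho_0}$). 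When $\dist(\locdom{x_0}{\rho_0},D) = 0$ this finishes the near region, since then $\avg u{x_0}{\rho_0} = 0$; in general it reduces that region to bounding the single number $|\avg u{x_0}{\rho_0}|$.

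The main step — and the main obstacle — is the decay estimate $\int_{\Omega \setminus \locdom{x_0}R}|\nabla u|^2 \, dy \leq C(\rho_0/R)^\beta$ for $\rho_0 \leq R \leq cd$ and some $\beta > 0$. I would prove it by hole-filling: for $R \geq \rho_0$ the source $a$ vanishes outside $\locdom{x_0}R$, so testing the weak formulation against $\eta^2(u - c_R)$ — where $\eta$ is a cutoff vanishing on $\locdom{x_0}R$, equal to $1$ off $\locdom{x_0}{2R}$, with $|\nabla\eta|\leq C/R$, and $c_R$ is taken to be $0$ if the annular region $\locdom{x_0}{2R}\setminus\locdom{x_0}R$ meets $D$ and the mean of $u$ over that region otherwise — kills the right-hand side, since $\eta^2(u - c_R)$ lies in $\sobolev 21_D(\Omega)$ and vanishes where $a \neq 0$. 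The ellipticity of the form and a Poincar\'e inequality on the annular region then produce $\int_{\Omega\setminus\locdom{x_0}{2R}}|\nabla u|^2 \leq \theta\int_{\Omega\setminus\locdom{x_0}{R/C}}|\nabla u|^2$ with a fixed $\theta < 1$; iterating, starting from $\int_\Omega |\nabla u|^2 \leq C$, gives the claim (the finitely many scales, if any, at which $\partial D$ crosses the annulus cost only a constant factor). This is exactly the step that uses the atom structure, and it is what upgrades the mere $BMO_D$ bound to an $L^\infty$ bound.

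It remains to assemble the pieces. For any $z$ and $\sigma \lesssim \dist(z, \locdom{x_0}{\rho_0})$ the local domain $\locdom z{2\sigma}$ misses $\supp a$, so Theorem \ref{CaccTheorem} (with $f \equiv 0$ on $\locdom z{2\sigma}$) and (\ref{Morrey}), together with the decay estimate, give $\operatorname{osc}_{\locdom z\sigma} u \leq C(\rho_0/\dist(z,\locdom{x_0}{\rho_0}))^{\beta/2}$ (and simply $\operatorname{osc}_{\locdom z\sigma} u \leq C$ once $\dist(z,\locdom{x_0}{\rho_0}) \gtrsim d$). The corkscrew condition supplies a local domain of size comparable to $r_0$ which touches $D$ and sits at distance $\gtrsim r_0$ from $\locdom{x_0}{\rho_0}$, and $\avg u z\sigma = 0$ on any local domain touching $D$; so a Harnack-type chain of overlapping local domains running from a given $y$ with $\dist(y,\locdom{x_0}{\rho_0}) \geq c\rho_0$ outward and then to $D$ telescopes to $|u(y)| \leq C$, the oscillation increments forming a convergent geometric-type series plus a bounded number of $O(1)$ terms near $D$. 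Taking $y$ at distance $\approx \rho_0$ from $\locdom{x_0}{\rho_0}$ and using the near estimate's oscillation bound then bounds $|\avg u{x_0}{\rho_0}|$, closing the near region as well. (The case $\rho_0 \approx r_0$ is easier, since there is no disparity of scales and one argues at scale $r_0$ directly.)
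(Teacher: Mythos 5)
The central step of your argument---the decay estimate $\int_{\Omega\setminus\locdom {x_0}R}|\nabla u|^2\,dy\le C(\rho_0/R)^\beta$---is exactly where the proof breaks, for two reasons. First, admissibility of the test function: for $v=\eta^2(u-c_R)$ to lie in $\sobolev 2 1 _D(\Omega)$ you need $c_R=0$ whenever $D$ meets the set where $\eta\neq 0$, i.e.\ whenever $D\not\subset \locdom {x_0}R$, not merely when $D$ meets the annulus $\locdom {x_0}{2R}\setminus\locdom {x_0}R$. In the typical regime (for instance the Dirichlet case $D=\partial\Omega$ with the atom supported in the interior and $\rho_0\le R\ll d$) you are therefore forced to take $c_R=0$, and then the inequality $R^{-2}\int_{\rm ann}|u|^2\,dy\le C\int_{\rm ann'}|\nabla u|^2\,dy$ that the hole-filling iteration needs is not a Poincar\'e inequality at all: $u$ has no vanishing anywhere near the annulus, and its smallness there is precisely what the decay estimate is supposed to deliver, so the argument is circular. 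Second, and more fundamentally, nothing in your hole-filling step uses the cancellation $\avg a{x_0}{\rho_0}=0$; only the support of $a$ enters (to kill the right-hand side of the weak formulation). But the claimed decay is false without that cancellation: for the Dirichlet Laplacian in a disk with the nonnegative source $|\ball {x_0}{\rho_0}|^{-1}\chi_{\ball {x_0}{\rho_0}}$ (not an atom, but every step of your decay argument applies to it verbatim) one has $|\nabla u(z)|\approx |z-x_0|^{-1}$, so the exterior Dirichlet integral behaves like $\log(d/R)$ and does not decay like $(\rho_0/R)^\beta$. An argument that never invokes the atom's mean-zero property (or the proximity of its support to $D$) cannot prove the decay; repairing it would essentially require H\"older-type control of the Green function in the pole variable, which is far more than the corollary needs. (A smaller issue: the corkscrew condition does not by itself supply a local domain touching $D$ at distance $\gtrsim r_0$ from $\locdom{x_0}{\rho_0}$, since $D$ may lie close to the atom; that part is fixable, the decay step is not, as written.)

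For comparison, the paper's proof avoids all chaining and decay. One writes $|u(z)|\le |u(z)-\avg u z{\rho_0}|+|\avg u z{\rho_0}|$ and uses Morrey's inequality (\ref{Morrey}) together with the global Meyers-type bound (\ref{PrimalEstimate}), $\|\nabla u\|_{L^t(\Omega)}\le C\|a\|_{L^r(\Omega)}\le C\rho_0^{2/t-1}$, for the first term; the cancellation of the atom enters only through the elementary duality bound $\|a\|_{\sobolev {t'}{-1}_D(\Omega)}\le C\rho_0^{1-2/t}$ (pair $a$ against $v-\avg v{x_0}{\rho_0}$ and apply (\ref{Morrey})), which via the dual estimate (\ref{DualEstimate}) gives $\|u\|_{L^{r'}(\Omega)}\le C\rho_0^{1-2/t}$ and hence, by H\"older, $|\avg u z{\rho_0}|\le C$. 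Your ``near'' estimate and your observation that the raw $BMO_D$ bound loses a logarithm are correct, but the global half of your argument has the gap described above; the duality route is the way to use the atom's cancellation at the right point.
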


\begin{proof}
We let $u$ be a solution of the weak mixed problem with $ f =a$, an
atom for the Hardy space $H^1 _D( \Omega)$, and $ f_N=0$. We suppose that
$a$ is supported in $ \locdom x \rho$. 

To estimate $u$, we fix $z$ in $ \Omega$, $t>2$, and use the Morrey inequality
(\ref{Morrey}) and the H\"older inequality  to obtain 
\begin{eqnarray}
\nonumber
|u(z)| & \leq & |u(z) - \avg u z \rho | +  |\avg u z \rho|\\
\label{StepA} 
&\leq&
 C ( \rho ^ { 1 - 2/t} \|\nabla u \|_ { L^ t ( \Omega )}
  + \rho ^ { 2/t -1} \|u \|_ {L^ { r' } ( \Omega)}).
\end{eqnarray}
As in Corollary \ref{TwoEstimates}, $r$ is defined by $ 1/r = 1/2 + 1/t$ 
and $r'$ is the dual exponent given
by $ 1/r' = 1/2 -1/t$.

We restrict $t$ to lie in the interval $(2,t_0)$ with $t_0$ as in 
Theorem \ref{CaccTheorem} and show how to use 
Corollary \ref{TwoEstimates} to estimate the two terms on the right-hand side of 
(\ref{StepA}). First, we use (\ref{PrimalEstimate}) and the normalization 
of the atom to conclude that 
\begin{equation}
\label{StepB}
\|\nabla u \|_ { L^ t ( \Omega)} \leq C \| a \|_ { L^ r ( \Omega)} 
\leq C \rho^ { 2/t -1}.
\end{equation}
Next, we claim that for $ t > 2$, we have 
\begin{equation}
\label{DualClaim}
\|a\|_ { \sobolev {t'} {-1}( \Omega) } \leq C \rho ^ { 1 - 2/t}.
\end{equation}
Given the claim, the estimate (\ref{DualEstimate}) in Corollary \ref{TwoEstimates}
implies that we have 
\begin{equation}
\label{StepC}
\| u \| _ { L^ { r' } ( \Omega)} \leq C \rho ^ { 1-2/t}.
\end{equation}
To establish the claim (\ref{DualClaim}), we use that either
 $ \avg v x \rho =0$ or $ \int _ \Omega a \, dy =0$,
the normalization of the atom, and the Morrey inequality (\ref{Morrey}) to 
write
$$
\left | \int_ \Omega  a^ \alpha v ^ \alpha \, dy  \right | =
\left | \int_ \Omega a^ \alpha ( v - \avg v x \rho ) ^ \alpha \, dy \right |
\leq C \rho ^ { 1- 2/t} \|\nabla v \|_ {L^ t( \Omega)}
$$
which gives the claim  (\ref{DualClaim}). 

The estimate of the Corollary follows from (\ref{StepA}), (\ref{StepB}), 
and (\ref{StepC}). 
\end{proof}

\section{The Green function for the mixed problem}
\label{Green}

In this section, we prove the existence of the Green function for the mixed  problem and give its main 
properties. We allow the case when $ D= \partial \Omega$ which gives the 
Dirichlet problem.  

We begin by giving 
 our definition of a Green function. This formulation is modeled on the 
definition in Littman, Stampacchia, and Weinberger 
\cite{LSW:1963}. We say that $ G(x,y) = (G^ { \alpha \beta}( x, y))_ {\alpha,\beta=1, 
\dots,m}$ is 
a {\em Green function with pole at $x$ for the 
mixed problem (\ref{WMP}) }if 
 $ G(x, \cdot ) \in L^ 1 ( \Omega)$ and whenever 
$u$ is the weak solution of the mixed problem with 
$ f \in C( \bar \Omega)$ and $f_N=0$, then we 
have 
$$
u^ \alpha (x) = \int _ \Omega G^ { \alpha \beta } (x,y ) f^ \beta (y) \, dy.
$$
If we have existence and uniqueness for the mixed problem (\ref{WMP}) 
when the right-hand side, 
$f$ is in $ C( \bar \Omega)$, then it is obvious that
the Green function is unique. The following theorem shows that the 
Green function exists 
and gives additional regularity of the Green function.
 
\begin{theorem}
\label{GreenTheorem}
Let $ \Omega$ be a bounded Lipschitz domain in the plane and let $ D \subset 
\partial \Omega$ be a non-empty open set satisfying 
 the interior corkscrew condition. If $L$ is an elliptic operator satisfying
 (\ref{CoeffBound}), (\ref{Ellipticity}), and  (\ref{Coerce}) on 
$ \sobolev 21 _D( \Omega)$.
  Then there exists a unique Green function
 $G(x,y)$ for the mixed problem which satisfies 
\begin{eqnarray}
\label{ItzaBMO}
& & G(x, \cdot ) \in BMO_D( \Omega) , \qquad x \in \Omega\\
& & 
\label{GradientBound}
\nabla _ y G(x, \cdot ) \in L^2 ( \Omega \setminus \locdom x r ) , 
\qquad\mbox{for } x\in \Omega, \  r \in ( 0, r_0)\\ 
& & 
\label{LogBound}
|G(x,y) |  \leq C( 1+ \log ( d /|x-y|)), \qquad x, y \in \Omega \\
&&
\label{GreenHolder}
 |G(x,y) - G(x,z) |  \leq  C \frac { |y-z|^ \gamma }{ |x-y | ^
\gamma } , \qquad x,y,z \in \Omega , \ |y-z|< \frac 1 2 |x-y|  \\
&&
\label{Boundary} 
|G(x,y ) | \leq C \frac { \dist(y,D)^ \gamma } {|x-y | ^ \gamma } , \qquad x, y 
\in  \Omega , \  \dist ( y, D) < \frac 1 2 |x-y | .
\end{eqnarray}
In these estimates $d$ is the diameter of $ \Omega$ and the constants depend
on the global character of $ \Omega$ as well as $M$. 

If we let $ G$ be the Green function for the mixed problem for 
$L$ and $\tilde G$ the
Green function for $L^*$ then we have 
\begin{equation}
\label{Symmetry}
G^ { \alpha\beta} (x,y) = \tilde G ^ { \beta \alpha } (y,x), \qquad \alpha,\beta=
1, \dots,m . 
\end{equation}

Furthermore, if $ f \in L^p ( \Omega)$ and $f_N \in L^ p ( N)$ for some $p> 1$, 
then the unique solution of the weak mixed problem (\ref{WMP}) is given by 
\begin{equation}
\label{Rep}
u^ \alpha (x) =  \int _ \Omega G^ { \alpha \beta } ( x,y ) f^ \beta ( y ) \, dy 
- \int _ { N } G^ { \alpha \beta } ( x, y ) f_N ^
 \beta ( y) \, d\sigma.
\end{equation}
\end{theorem}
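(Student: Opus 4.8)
The plan is to build $G(x,\cdot)$ as a weak limit of approximate Green functions and then read off every assertion of the theorem from the estimates of Section~\ref{Estimates}. Fix $x\in\Omega$ and a component index $\alpha$. For small $\rho>0$ let $g_\rho^{(\alpha)}$ have components $(g_\rho^{(\alpha)})^\beta=\delta^{\alpha\beta}|\locdom x\rho|^{-1}\chi_{\locdom x\rho}$, let $w_\rho^{(\alpha)}\in\sobolev 2 1_D(\Omega)$ be the weak solution of the adjoint mixed problem (\ref{AWMP}) with data $g=g_\rho^{(\alpha)}$, $g_N=0$, and set $G_\rho^{\alpha\beta}(x,\cdot)=(w_\rho^{(\alpha)})^\beta$. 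Applying the duality identity (\ref{Dual}) to $w_\rho^{(\alpha)}$ and to the solution $u$ of (\ref{WMP}) for data $f\in C(\bar\Omega)$, $f_N=0$, gives $\average_{\locdom x\rho}u^\alpha\,dy=\int_\Omega G_\rho^{\alpha\beta}(x,y)f^\beta(y)\,dy$; since $x$ is interior and $u$ is continuous near $x$ (by Theorem~\ref{CaccTheorem} and (\ref{Morrey}) the gradient lies in $L^t_{\mathrm{loc}}$, $t>2$), the left side tends to $u^\alpha(x)$ as $\rho\to0$, so a limit of the $G_\rho$ will be a Green function once it exists.

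The main step is the uniform bound $\|G_\rho^{\alpha\cdot}(x,\cdot)\|_{*,D}\le C$, independent of $\rho$. I would get this from the atomic characterization (\ref{NewNorm}): if $a$ is an atom for $\Omega$ and $D$ and $U$ solves (\ref{WMP}) with $f=a$, $f_N=0$, then (\ref{Dual}) gives $\int_\Omega a^\beta G_\rho^{\alpha\beta}(x,y)\,dy=\average_{\locdom x\rho}U^\alpha\,dy\le\|U\|_{L^\infty(\Omega)}\le C$ by Corollary~\ref{AtomicCorollary}, and taking the supremum over atoms yields the claim. Next, on $\Omega\setminus\locdom x r$ the function $w_\rho^{(\alpha)}$ solves the homogeneous adjoint equation with zero Neumann data once $\rho$ is small; covering $\Omega\setminus\locdom x r$ by a bounded number of local domains and combining the Caccioppoli inequality (\ref{Caccioppoli}) with the $L^2$ form of John--Nirenberg and the uniform $BMO_D$ bound gives $\|\nabla_y G_\rho^{\alpha\cdot}(x,\cdot)\|_{L^2(\Omega\setminus\locdom x r)}\le C(r)$, uniformly in small $\rho$. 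Because $D\ne\emptyset$, the $BMO_D$ bound also gives a uniform bound in every $L^p(\Omega)$. A diagonal argument over $r=1/n$ then extracts a subsequence converging weakly in each $L^p(\Omega)$ and weakly in $\sobolev 2 1(\Omega\setminus\locdom x r)$ for every $r$; the Hölder estimate of Corollary~\ref{HolderCor} makes the sequence locally equicontinuous on $\Omega\setminus\{x\}$, so the convergence is also locally uniform there. The limit $G^{\alpha\cdot}(x,\cdot)$ satisfies (\ref{ItzaBMO}) by weak lower semicontinuity of (\ref{NewNorm}), satisfies (\ref{GradientBound}) from the weak $\sobolev 2 1$ limits, and is a Green function by passing to the limit in $\average_{\locdom x\rho}u^\alpha=\int_\Omega G_\rho^{\alpha\beta}(x,\cdot)f^\beta$; uniqueness is immediate from the definition since the mixed problem has a unique solution for $f\in C(\bar\Omega)$.

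For the pointwise bounds I work with $G(x,\cdot)$, which away from $x$ solves the homogeneous adjoint mixed problem and vanishes on $D$. Estimate (\ref{GreenHolder}) follows from (\ref{Holder}): for $|y-z|<\half|x-y|$ apply the Hölder estimate on a local domain of radius comparable to $|x-y|$ containing $y$ and $z$ whose double avoids $x$, and bound the oscillation term by $\|G(x,\cdot)\|_{*,D}$ via John--Nirenberg. For (\ref{LogBound}), set $R=|x-y|$ and telescope: $|\avg{G}{y}{2^{j+1}R}-\avg{G}{y}{2^jR}|\le C\|G(x,\cdot)\|_{*,D}$ for $R\le 2^jR\le d$, the top average is bounded via the uniform $L^2$ bound, and $|G(x,y)-\avg{G}{y}{R}|\le C\|G(x,\cdot)\|_{*,D}$ by (\ref{Holder}); summing the $O(\log(d/R))$ steps gives the bound, using that $\avg{G}{y}{\rho}=0$ whenever $\dist(\locdom y\rho,D)=0$. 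For (\ref{Boundary}), with $d_y=\dist(y,D)<\half|x-y|$ pick $\bar y\in D$ with $|y-\bar y|=d_y$; since $G(x,\cdot)$ vanishes on $D$ and solves the homogeneous problem near $\bar y$, the boundary Hölder estimate of Corollary~\ref{HolderCor} on a local domain of radius $\sim|x-y|$ about $\bar y$ gives $|G(x,y)|=|G(x,y)-G(x,\bar y)|\le C(d_y/|x-y|)^\gamma\|G(x,\cdot)\|_{*,D}$.

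Symmetry (\ref{Symmetry}) is obtained by applying (\ref{Dual}) to the approximants: for the adjoint-Green approximant $w_\rho^{(\alpha)}$ of $L$ at $x$ and the $L$-Green approximant $\tilde w_\sigma^{(\beta)}$ of $L^*$ at $y$ one gets $\average_{\locdom x\rho}\tilde G_\sigma^{\beta\alpha}(y,\cdot)\,dz=\average_{\locdom y\sigma}G_\rho^{\alpha\beta}(x,\cdot)\,dz$; letting $\sigma\to0$, then $\rho\to0$, and using the local uniform convergence of the approximants away from the poles gives $G^{\alpha\beta}(x,y)=\tilde G^{\beta\alpha}(y,x)$ for $x\ne y$. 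For (\ref{Rep}): when $f_N=0$ and $f\in L^p(\Omega)$, $p>1$, one passes from $C(\bar\Omega)$ data by density, using $G(x,\cdot)\in L^{p'}(\Omega)$, the continuity of the solution operator (Corollary~\ref{TwoEstimates}, with $L^p\hookrightarrow\sobolev{t'}{-1}_D$), and the local Hölder continuity of the solution at $x$. The Neumann term comes from applying (\ref{Dual}) to $u$ (solving (\ref{WMP}) with $f=0$ and Neumann datum $f_N$) and $w_\rho^{(\alpha)}$, which yields $\average_{\locdom x\rho}u^\alpha=-\int_N G_\rho^{\alpha\beta}(x,y)f_N^\beta(y)\,d\sigma$; letting $\rho\to0$ and using continuity of $u$ at the interior point $x$ together with convergence of the boundary traces $G_\rho(x,\cdot)|_{\partial\Omega}\to G(x,\cdot)|_{\partial\Omega}$ in $L^{p'}(\partial\Omega)$ (compact trace embedding, $\partial\Omega$ being one-dimensional) gives the Neumann half of (\ref{Rep}); linearity combines the two halves. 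I expect the principal obstacle to be the second step: securing the uniform $BMO_D$ and local $\sobolev 2 1$ bounds on the approximants and extracting a limit that is simultaneously well behaved in $BMO_D$, in $\sobolev 2 1_{\mathrm{loc}}(\Omega\setminus\{x\})$, and locally uniformly away from the pole.
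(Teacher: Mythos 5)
Your proposal is correct and follows essentially the same route as the paper: approximate Green functions defined as solutions of the adjoint mixed problem with averaged data, the uniform $BMO_D$ bound via atoms, Corollary \ref{AtomicCorollary} and the dual characterization (\ref{NewNorm}), weak-$*$ extraction, Caccioppoli bounds away from the pole, the local estimates of Corollary \ref{HolderCor} plus chaining for (\ref{LogBound})--(\ref{Boundary}), the duality identity for symmetry, and a limit in the approximate representation for (\ref{Rep}). Your minor variants (compact trace embedding in place of Arzela--Ascoli for the boundary term, and the nearest-point boundary H\"older argument for (\ref{Boundary})) are sound and do not change the substance of the argument.
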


\begin{proof} 
We fix $ x \in \Omega$, $ \alpha $ in $\{ 1, \dots, m\}$,  and
$ \rho \in (0, r_0)$. We let $ G_ \rho ^ { \alpha\cdot}  ( x, \cdot ) $ be the 
solution of mixed problem (\ref{AWMP})
for $ L^*$ with this $g = e_ \alpha \chi _ { \locdom x \rho } / |\Omega _ 
  \rho (x) |$ and $ g_N = 0$. 
We fix $f$ in $L^ p ( \Omega)$ and $f_N \in L^{p_N} (N)$ with $ p > 1 $ and $p_N>1$
and 
let $u$ be  a weak solution of the mixed problem for $L$ (\ref{WMP}) with data $f$
and $f_N$. 
If we let $ w = G_ \rho ^ { \alpha \cdot } ( x,\cdot )$ in (\ref{Dual}), we obtain
\begin{equation}
\label{RepApprox}
\average _ { \locdom x \rho } u^ \alpha (y )\, dy  
= \int _ \Omega G^ { \alpha \beta } _ \rho ( x, y ) f^ \beta (y) \, dy 
- \int _ { N } G^ { \alpha \beta } _ \rho (x,y)f ^ \beta _N (y)\, d\sigma.
\end{equation}
If  $ f_N$ is 
zero and  $f$ is an atom for $H^1_D( \Omega)$, we may use the estimate of 
Corollary \ref{AtomicCorollary} and the equivalent  norm on $BMO_D( \Omega)$ given
in (\ref{NewNorm})
to conclude that 
$ \| G_\rho ( x, \cdot ) \| _ { *,D} \leq C$ with the constant $C$ depending
only on  $M$ and the global properties of $ \Omega$.   The Banach-Alaoglu theorem
 gives
that for each $x$, there is a sequence $ \{\rho _j\}$ 
with $ \lim _ { j \rightarrow
\infty} \rho _j =0 $ and a function $ G(x, \cdot) \in BMO_D(\Omega)$ so 
that $ G_ {\rho _j }( x, \cdot )$ converges to $G(x, \cdot)$ in the weak-$*$ topology of 
$BMO_D( \Omega)$.   Since $u$ is H\"older continuous, the left-hand side of 
(\ref{RepApprox}) converges to $u^ \alpha(x)$.  
Since $ L^ p ( \Omega ) \subset H^ 1 _D ( \Omega)$ for all $p > 1 $, 
we obtain the representation (\ref{Rep}) in the case that $ f_N =0$. This gives
us that $ G(x, \cdot)$ is a 
 Green function for 
the mixed problem with pole at $x$. 
Thus, we have established   that the
Green function lies in $BMO_D( \Omega)$, (\ref{ItzaBMO}).

If we choose any sequence $ \{\rho_k  \}$ with 
$ \lim _{ k\rightarrow \infty } \rho _k  =0$, the above argument, applied to 
the rows of $G_{\rho _k } $,  gives a 
subsequence of $ \{ G _{\rho _k} ( x, \cdot )\}$ which converges to a
Green function. As the Green function is unique, the limit must be the function 
$G(x, \cdot)$.  This implies that the entire family  
$\{ G_ \rho (x, \cdot )\}_\rho$ converges
to $G(x, \cdot) $ in the weak-* topology of $BMO_D( \Omega)$.

Next, we recall that $  BMO_D( \Omega) \subset L^ p ( \Omega) $ for any $ p < \infty$. 
Thus we may use the 
 Caccioppoli 
inequality (\ref{Caccioppoli}),  to  conclude 
that 
$$
\int _ { \Omega \setminus \locdom x r } |\nabla _y G_\rho (x, y  ) |^2 \, dy 
\leq C(r) , \qquad \rho < r / 2.
$$
This estimate will also hold for the limit and thus we obtain the
conclusion (\ref{GradientBound}) and that the rows of  $G(x, \cdot )$ are solutions
of $L^* G ^{ \alpha \cdot } ( x, \cdot ) = 0 $  in $ \Omega \setminus \{ x \}$. More 
precisely, we have 
$ A(\phi , G^ { \alpha \cdot } (x, \cdot ) ) = 0 $
whenever $\phi \in \sobolev 2 1 _D ( \Omega)$ and $ \phi$ vanishes in a 
neighborhood of $x$.  Since $G(x, \cdot)$ is a solution of the adjoint equation, we
have the estimates of Corollary \ref{HolderCor}  in $ \Omega \setminus \{ x \}$.

We show how to use these estimates to obtain the pointwise bounds of the Theorem. 
If $ x $ and $y$ are in $ \Omega$, 
we may find a chain of domains 
$ \Omega _j = \locdom  { y_j } { \rho_j}$ for $j=0, \dots, N$
 so that
a) $\Omega _j \subset \Omega \setminus \{ x \} $, 
$ \Omega _j \cap \Omega _ { j+1 } \neq \emptyset $ 
for $j = 0, \dots, N-1$, b) $ \rho_0 \geq c  |x-y |$, $y _0 = y $,  
$\rho _N = r _0/2$, c) $ 1 \leq \rho _{ j + 1 } / \rho _j \leq 2$,  and 
d) $ N \leq C \log ( d/|x-y|)$.  
Since $ G(x, \cdot )$ is in $BMO_D( \Omega)$,   we   have that 
$$
\left | \average _ { \Omega _ j } G(x, y ) \, dy - 
\average _ { \Omega _ { j+1} } G(x, y ) \, dy \right | 
\leq C\| G( x, \cdot ) \| _ { *, D}.
$$
Since the rows of $G(x, \cdot)$ are solutions of the equation $ L^*u =0$, 
the properties of the chain $\{ \Omega _j \}$ and the 
bound (\ref{uBound}) implies the pointwise bound (\ref{LogBound}) for $G$. To 
obtain the H\"older continuity (\ref{GreenHolder}), 
we use that $G$ lies in  $BMO_D( \Omega)$ and the 
local H\"older estimate (\ref{Holder}). If we fix $x$ and $y$, then we may 
apply the local H\"older estimate on a local domain $\locdom y \rho$  with radius 
$\rho$ 
comparable to $ |x-y|$. The boundary estimate (\ref{Boundary}) 
follows immediately from
(\ref{GreenHolder}). 

Next, we turn to the symmetry property of the Green function (\ref{Symmetry}). 
We let $ G_ \rho$ and $ \tilde G_\rho$ be the approximate 
Green functions for $L$ and
$L^ *$ used in the construction of the Green function. 
Using the Green identity (\ref{Dual}) we obtain 
$$
 \average _ { \locdom y \rho } G_ \rho  ^ { \alpha \beta } (x, z) \, dz 
= \average _ { \locdom x \rho } \tilde G_ \rho  ^ { \alpha \beta} (y,z ) \, dz.
$$
Using the H\"older continuity of $G_\rho$ and $ \tilde G_ \rho$ in the second
variable and the Arzela-Ascoli theorem, we may extract a subsequence which
converges uniformly on compact subsets of $ \Omega \setminus \{x\}$. 
Letting $ \rho $ tend to zero, we obtain $G^ { \alpha \beta} (x,y) 
= \tilde G^ { \beta \alpha} 
(y,x) $. 

Finally, to obtain the representation formula of the Theorem for solutions
 with $f$ and 
$ f_N$ not zero,  we may use the Arzela-Ascoli theorem to find a sequence
$ G_{ \rho_j } (x,\cdot )$ which converges uniformly 
on $ \partial \Omega$. Thus, we
may take the limit in (\ref{RepApprox}) to obtain the representation formula
of the Theorem. The convergence of the integral on $ \Omega$ follows since 
$L^ p ( \Omega) \subset H^1 _D ( \Omega)$ and $G_ \rho(x, \cdot)$ converges weakly in  $ BMO_D ( \Omega)$. 
\end{proof}

\note
{ 
Kim  noted that the proof of the representation formula was not
so clear. I hope it is better now. 

When $D \neq \emptyset$, then $L^p \subset H^1_D$, $p>1$. Thus, we have that
$ \int G_ {\rho_k}(x,y) f(y ) \,d y $ converges to $\int G (x,y) f(y) \, dy$ 
and this follows from the weak-* convergence.

A couple of loose ends:

a) Do we have the Lorentz space estimate, $\nabla G(x, \cdot)\in L^ {2, \infty}$?

To do: Study Dolzmann and Muller. They claim to prove Lorentz space estimates.

}

\section{The Green function for the Neumann problem}
\label{Neumann}

In this section we consider the Green function for the Neumann problem (which is 
the mixed problem in the extreme case where $ D = \emptyset$). 
Most of our arguments 
parallel the construction of the Green function for the mixed problem. 
However, there 
is an additional complication. The homogeneous Neumann problem for $L $ has
non-trivial solutions. Hence, we need to impose compatibility conditions on the
data and  conditions to guarantee uniqueness of solutions. It seems that the
most natural condition for uniqueness involves the boundary values of solutions. 

We let $L$ be an operator as defined in  (\ref{OpDef}) and we consider the form  
$ A$ now defined on $ \sobolev 2 1 _ \emptyset ( \Omega)$,   the homogeneous 
Sobolev space of 
functions with one derivative in $L^2 ( \Omega)$. Since we have chosen to 
norm this 
space by the expression $( \int _ \Omega |\nabla u |^2 \, dy ) ^ { 1/2}$, 
the elements of this space will be equivalence classes of functions under the 
equivalence relation $ u $ is equivalent to $v$ if $ u-v $ is a constant. 

We let 
$ {\cal V} = \{ v : A(v, \phi )=0  \mbox{ for all } \phi \in 
\sobolev 2 1 _ \emptyset ( \Omega)\}$ and $ {\cal V ^ *} = \{
v : A(\phi , v ) = 0 \mbox { for all } \phi \in \sobolev 2 1 _ \emptyset 
( \Omega)\}$ denote the solutions of the homogeneous Neumann problems for $L $ and 
$L^*$ respectively. Under our ellipticity assumption (\ref{Ellipticity}) and 
boundedness of the coefficients (\ref{CoeffBound}) 
we have that these spaces are finite dimensional 
and the Fredholm alternative implies that $ \dim {\cal V} = \dim {\cal V}^*$. The 
H\"older estimate of Corollary \ref{HolderCor} implies that the elements of 
these spaces are H\"older continuous. 

We give the weak formulation of the Neumann problem for $L$
\begin{equation} 
\label{WNP}
\left \{ \begin{array}{ll} 
A(u, \phi) = - \langle f , \phi \rangle + \langle f_N,  \phi 
\rangle_ { \partial \Omega},
 \qquad
& \phi \in \sobolev 2 1 _ \emptyset ( \Omega)\\
u \in \sobolev 2 1 _ \emptyset ( \Omega ) \\
\int _ { \partial \Omega } u^ \alpha v ^ \alpha \, d\sigma =0 , \qquad 
& v \in { \cal
V}. 
\end{array}
\right. 
\end{equation}
If $v$ lies in $ \cal V ^ *$, then we have $A(u,v) =0$ for $ u \in \sobolev 2 1 _ 
\emptyset ( \Omega)$. Thus, if we are to find a solution to  (\ref{WNP}),
we must have that $f$ and $f_N$ satisfy the compatibility condition
\begin{equation}
\label{CC}
 \langle f_N , v \rangle _ { \partial \Omega}- \langle f, v \rangle = 0, 
\qquad v \in \cal V ^ *.
\end{equation}
For the operators we consider,  the constant functions lie in $ \cal V ^*$. 
Thus,  the first line of (\ref{WNP}) is satisfied if $ \phi$ is any 
representative of an equivalence class in $ \sobolev 2 1 ( \Omega)$.

We will use the following technical result when we construct the Green function.
\begin{proposition}
\label{AnotherNorm}
Suppose $ \Omega$ is a Lipschitz domain and  that the operator $L$ satisfies
 (\ref{CoeffBound}), (\ref{Ellipticity}), and the coerciveness condition 
 (\ref{Coerce}) with $ D = \partial \Omega$.

a) The norms $\| \cdot \| _ { L^ 2 ( \Omega)}
 + \| \cdot \| _ { \sobolev 21 _ { \emptyset} ( \Omega)} $ 
and $ \| \cdot \| _ { L^ 2 ( \partial \Omega ) } $ are equivalent 
on the space $ \cal V$. 

b) If $ \mu = ( \mu^1, \dots, \mu ^ m) $ is a finite $\reals^m$-valued 
 Borel measure on $\bar \Omega$, then we may find 
$\lambda _ \mu  \in \cal V $  so that
$$
\int _ { \bar \Omega } v^\alpha\, d\mu^\alpha  
  = \int _ { \partial \Omega } \lambda _ \mu ^ \alpha v^ \alpha \, 
d\sigma , \qquad v \in \cal V.
$$
We have the estimate 
 $ \| \lambda _ \mu \| _ { L^ 2 ( \partial \Omega)}
\leq C \| \mu \|$  where $ \|\mu \| $ denotes the total variation of $\mu$. 
\end{proposition}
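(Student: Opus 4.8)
The proof rests on the finiteness of $\dim{\cal V}$ established above, which reduces part (a) to checking that the two expressions are genuine norms on ${\cal V}$ (equivalence of norms on a finite-dimensional space is then automatic). The expression $\|\cdot\|_{L^2(\Omega)}+\|\cdot\|_{\sobolev 2 1 _\emptyset(\Omega)}$ is a norm because the $L^2(\Omega)$ term already distinguishes the constant representatives; the real content of part (a) is that the trace map is injective on ${\cal V}$. The plan for this is: given $v\in{\cal V}$ with $v=0$ on $\partial\Omega$, note that $v\in\sobolev 2 1 _{\partial\Omega}(\Omega)\subset\sobolev 2 1 _\emptyset(\Omega)$, so the coercivity hypothesis (\ref{Coerce}) with $D=\partial\Omega$ gives $M^{-1}\int_\Omega|\nabla v|^2\,dy\le A(v,v)$; but $A(v,v)=0$ since $v\in{\cal V}$, so $\nabla v\equiv 0$, hence $v$ is constant, hence $v\equiv 0$. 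Thus $\|\cdot\|_{L^2(\partial\Omega)}$ is a norm on ${\cal V}$ and (a) follows.

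For part (b), I would first record that every $v\in{\cal V}$ is continuous on $\bar\Omega$: elements of ${\cal V}$ solve the homogeneous problem with no pole to avoid, so the H\"older estimate of Corollary \ref{HolderCor} applies on all local domains $\locdom x\rho$ and gives $v\in C^\gamma(\bar\Omega)$. Combined with the norm equivalence from part (a) on the finite-dimensional space ${\cal V}$, this yields $\|v\|_{L^\infty(\bar\Omega)}\le C\|v\|_{L^2(\partial\Omega)}$ for $v\in{\cal V}$. Hence the linear functional $\ell_\mu(v):=\int_{\bar\Omega}v^\alpha\,d\mu^\alpha$ is well defined on ${\cal V}$ and obeys $|\ell_\mu(v)|\le\|v\|_{L^\infty(\bar\Omega)}\|\mu\|\le C\|\mu\|\,\|v\|_{L^2(\partial\Omega)}$.

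Finally, equip ${\cal V}$ with the inner product $\langle u,v\rangle=\int_{\partial\Omega}u^\alpha v^\alpha\,d\sigma$, which part (a) shows is positive definite, making ${\cal V}$ a finite-dimensional Hilbert space. By the Riesz representation theorem there is a unique $\lambda_\mu\in{\cal V}$ with $\ell_\mu(v)=\langle\lambda_\mu,v\rangle$ for all $v\in{\cal V}$ --- which is exactly the claimed identity --- and $\|\lambda_\mu\|_{L^2(\partial\Omega)}=\|\ell_\mu\|_{{\cal V}^*}\le C\|\mu\|$.

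I expect the only steps needing genuine care to be the injectivity of the trace map (correctly invoking (\ref{Coerce}) with $D=\partial\Omega$ against the definition of ${\cal V}$) and the continuity of elements of ${\cal V}$ up to the boundary, so that integration against a measure possibly charging $\partial\Omega$ is legitimate; the remainder is soft finite-dimensional linear algebra.
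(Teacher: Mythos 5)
Your proposal is correct and follows essentially the same route as the paper: part (a) by showing the boundary $L^2$ norm is definite on $\cal V$ using the coercivity of $A$ on $\sobolev 2 1 _{\partial \Omega}(\Omega)$ together with finite dimensionality, and part (b) by using the local boundedness/H\"older estimates of Corollary \ref{HolderCor} to make $v \mapsto \int_{\bar\Omega} v^\alpha\, d\mu^\alpha$ continuous and then applying the Riesz representation theorem in $\cal V$ with the boundary inner product. Your write-up merely spells out a few steps (e.g.\ $A(v,v)=0$ forcing $\nabla v \equiv 0$ and then $v\equiv 0$) that the paper leaves implicit.
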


\begin{proof} Since $ {\cal V} \subset \sobolev 2 1 _ \emptyset 
(  \Omega)$, 
it follows that 
$ \int _ { \partial \Omega } |v|^ 2 \, d\sigma $ is finite on $ \cal V$. 
If $ v \in \cal V$ and $ \int _ { \partial \Omega } |v|^ 2\, d\sigma = 0$, then 
$v =0$ on 
$ \partial \Omega$ and it follows that $ v$ is in the Sobolev space 
$\sobolev 2 1 _ { \partial \Omega } ( \Omega)$. Since we assume that the 
form $A$ is coercive on this space, it follows that 
$ v= 0$. Thus, we have $ \| \cdot \| _ { L^ 2 ( \partial \Omega)}$ is a
norm on this space. Since $ \cal V$ is finite dimensional, it follows that 
the norms $ \| \cdot \|_ { L^2 ( \Omega)} + \| \cdot \| _ { \sobolev 2 1 _ \emptyset ( \Omega) }$ and $\| \cdot \| _ {L^ 2( \partial \Omega)}$ are
equivalent. 

To establish  part b), observe that the local boundedness estimate
  (\ref{uBound}) 
implies that  $ v \rightarrow \int _ { \bar \Omega } v^ \alpha \, d\mu^\alpha$ 
is a 
continuous linear functional on $ \cal V$. According to part a), $ \cal V$ 
is a Hilbert 
space under the inner product  $\int_ { \partial \Omega} 
 u^\alpha  v^ \alpha \, d\sigma$. Hence, the
Riesz representation implies that we have a unique $ \lambda _ \mu \in \cal V$ so that
$$
\int _ { \bar \Omega } v^ \alpha\, d\mu^ \alpha
 = \int _ { \partial \Omega } 
 \lambda _ \mu ^ \alpha v^ \alpha \, 
d\sigma .
$$
Corollary \ref{HolderCor} implies that the elements of $ \cal V$ are bounded 
functions, the estimate for $ \lambda _ \mu$ follows
from the Riesz representation theorem. 
\end{proof} 

Next, we give a standard existence theorem for the weak Neumann problem (\ref{WNP}). 

\begin{theorem}
\label{SolveN}
 Let $ \Omega$ be a Lipschitz domain and suppose that $L$ satisfies 
the ellipticity condition (\ref{Ellipticity}), has bounded 
coefficients (\ref{CoeffBound}),
and  that (\ref{Coerce}) holds for $D= \partial \Omega$. 
If $ f$ and $f_N$ satisfy the compatibility condition (\ref{CC}), then the 
weak Neumann problem
(\ref{WNP}) has a unique solution and the solution $u$ satisfies the estimate 
$$
\| u \| _ { L^ 2 ( \Omega ) } 
 + \| u \| _ { \sobolev 2 1 _ { \emptyset } ( \Omega)}
\leq C ( \| f \| _ { \sobolev 2  { -1} _ \emptyset ( \Omega ) } 
+ \| f_N \| _ { \sobolev 2 { -1/2 } _ \emptyset ( \partial \Omega ) } ) .
$$
\end{theorem}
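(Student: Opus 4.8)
The plan is to realize the weak Neumann problem (\ref{WNP}) as an abstract variational problem on a quotient Hilbert space and then apply the Lax--Milgram lemma (or the Fredholm alternative together with the compatibility condition), so that the construction parallels the treatment of the mixed problem via (\ref{WMP}) but on the space $\sobolev 2 1 _ \emptyset ( \Omega)$ rather than $\sobolev 2 1 _D ( \Omega)$. The main structural point is that the last line of (\ref{WNP}), the normalization $\int _ { \partial \Omega } u^ \alpha v ^ \alpha \, d\sigma = 0$ for $v \in \cal V$, is exactly what pins down a unique representative, and Proposition \ref{AnotherNorm} is the tool that converts this boundary normalization into a coercive problem.

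First I would decompose. Write $H = \sobolev 2 1 _ \emptyset ( \Omega)$ with its norm $\|\nabla\cdot\|_{L^2}$; by (\ref{CoeffBound}) the form $A$ is bounded on $H$ (inequality (\ref{Bounded})), and by hypothesis $\cal V$ and $\cal V^*$ are finite dimensional with $\dim \cal V = \dim \cal V^*$. Split $H = \cal V \oplus W$ where $W$ is a closed complement; restricted to $W$, the Fredholm alternative gives that $A$ restricted to $W \times W$ is invertible, i.e. $\inf_{u \in W} \sup_{\phi \in W} A(u,\phi)/(\|u\|_H \|\phi\|_H) \ge c > 0$, and likewise with the roles reversed. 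Given data $f, f_N$ satisfying (\ref{CC}), the functional $\phi \mapsto -\langle f, \phi\rangle + \langle f_N, \phi\rangle_{\partial\Omega}$ annihilates $\cal V^*$, hence by the inf--sup condition on $W$ there is a unique $u_0 \in W$ with $A(u_0, \phi) = -\langle f, \phi\rangle + \langle f_N, \phi\rangle_{\partial\Omega}$ for all $\phi \in W$; since the right side also annihilates $\cal V^*$ and $A(u_0,\cdot)$ annihilates $\cal V^*$ when tested only against $W$... here one must be slightly careful, so I would instead phrase the solvability directly: the equation $A(u,\phi) = \ell(\phi)$ for all $\phi \in H$ is solvable (with $u$ unique up to $\cal V$) if and only if $\ell$ kills $\cal V^*$, which is (\ref{CC}). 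This is the standard Fredholm statement and I would cite it rather than reprove it.

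Next I would fix the representative. Any two solutions differ by an element of $\cal V$, so I seek the unique solution $u = u_0 + v$, $v \in \cal V$, with $\int_{\partial\Omega} u^\alpha w^\alpha \, d\sigma = 0$ for all $w \in \cal V$. This is a linear system for $v$: writing it out, I need $\int_{\partial\Omega} v^\alpha w^\alpha \, d\sigma = -\int_{\partial\Omega} u_0^\alpha w^\alpha \, d\sigma$ for all $w \in \cal V$, and by Proposition \ref{AnotherNorm}(a) the bilinear form $(v,w) \mapsto \int_{\partial\Omega} v^\alpha w^\alpha \, d\sigma$ is an inner product on the finite-dimensional space $\cal V$, hence this system has a unique solution $v$. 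This proves existence and uniqueness for (\ref{WNP}).

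Finally, the estimate. For $u_0 \in W$ the inf--sup bound gives $\|u_0\|_H \le C(\|f\|_{\sobolev 2 {-1} _\emptyset (\Omega)} + \|f_N\|_{\sobolev 2 {-1/2}_\emptyset(\partial\Omega)})$, and then $\| u_0 \|_{L^2(\Omega)} \le C \| u_0 \|_H$ by the Poincar\'e--type inequality (\ref{Poincare}) applied on $\Omega$ (or by covering $\Omega$ by local domains; note a representative is only determined up to a constant but the $L^2$ norm of the mean-zero representative is controlled). For the correction $v \in \cal V$: by Proposition \ref{AnotherNorm}(a) it suffices to bound $\|v\|_{L^2(\partial\Omega)}$, and from the defining system $\|v\|_{L^2(\partial\Omega)}^2 \le \|v\|_{L^2(\partial\Omega)} \|u_0\|_{L^2(\partial\Omega)}$, while $\|u_0\|_{L^2(\partial\Omega)} \le C \|u_0\|_H$ by the trace inequality (\ref{DMatters}) (applied on $W \subset H$; the constant here uses only $\Omega$). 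Combining, $\|u\|_{L^2(\Omega)} + \|u\|_H \le \|u_0\|_{L^2} + \|u_0\|_H + C\|v\|_{L^2(\Omega)} + \|v\|_H \le C(\|f\|_{\sobolev 2 {-1}_\emptyset(\Omega)} + \|f_N\|_{\sobolev 2 {-1/2}_\emptyset(\partial\Omega)})$, using that on the finite-dimensional $\cal V$ all norms are equivalent (Proposition \ref{AnotherNorm}(a) again). The step I expect to be most delicate is keeping the Fredholm bookkeeping honest — verifying that the compatibility condition (\ref{CC}), phrased via $\cal V^*$, is exactly the solvability condition for the variational equation on the quotient, and that testing against $W$ versus all of $H$ does not lose information — but this is routine once the decomposition $H = \cal V \oplus W$ is set up; no genuinely new estimate beyond those already in the paper is needed.
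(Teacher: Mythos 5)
Your argument is correct and is essentially the paper's own proof: existence via the Fredholm alternative (the problem is solvable exactly when the data functional annihilates ${\cal V}^*$, which is (\ref{CC})), uniqueness by adjusting with an element of the finite-dimensional space ${\cal V}$ through the boundary normalization and part a) of Proposition \ref{AnotherNorm}, and the (non-quantitative) estimate coming from the bounded inverse that the Fredholm/open-mapping machinery provides, which is all the paper claims. The one repair needed is your appeal to (\ref{DMatters}) for the boundary trace bound, since that inequality requires $D\neq\emptyset$ and fails when $D=\emptyset$ (constants violate it); instead bound the trace of the mean-zero representative using (\ref{Homogeneous}), after which the finite-dimensional correction in ${\cal V}$ is controlled exactly as you indicate.
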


\begin{proof} The existence of solutions is a standard consequence of
 the Fredholm alternative. See Gilbarg and Trudinger 
 \cite[Theorem 8.6]{GT:1983} for 
 the proof of a similar result. To establish uniqueness, if $u$ solves 
 (\ref{WNP}) with
 $f=0$ and $ f_N =0$, then we have that $ u \in \cal V$. Part a) of Lemma
  \ref{AnotherNorm}
and the condition 
$ \int _ { \partial \Omega } u^ \alpha  v ^ \alpha\, d\sigma = 0$
for $ v \in \cal V$ imply that $u =0 $.
\end{proof}

We now give estimates for solutions of the Neumann problem to take the place of
Corollary \ref{TwoEstimates}. Since the homogeneous Neumann problem may have 
non-constant solutions, Corollary \ref{TwoEstimates} cannot hold for all solutions of 
the Neumann problem. 

We consider a function $f$ in $L^r( \Omega)$, $r>1$, 
and use Lemma \ref{AnotherNorm} to find a 
function $ \lambda _f \in {\cal V} ^ *$ so that the pair $f$ and $ \lambda _f |_ 
{\partial \Omega}$ satisfy the compatibility condition needed to solve the Neumann
problem,
\begin{equation}
\label{ACC}
\langle  \lambda _f , v \rangle_{ \partial \Omega} - \langle f ,v \rangle=0 , \qquad 
v\in \cal V ^*.
\end{equation} 
We let $ \Lambda _f $ be defined by 
$ \Lambda _f ( \phi) = 
 \int _ {\partial \Omega } 
\lambda _f ^ \alpha \phi ^ \alpha\, d\sigma - 
\int _ \Omega f^ \alpha \phi ^ \alpha\, dy.
$
We observe that 
the constant functions lie  in $ \cal V^*$ and hence we have  $\Lambda _f ( c ) =0$ 
for all 
constant vectors $c\in \reals ^m$.  The elements in $ \cal V^*$ are bounded and 
for all $p$,   $1\leq p< \infty$, 
we have the  inequalities
\begin{equation}
\label{Homogeneous}
r_0^ { -2/p}\| u - \bar u \| _{ L^ p( \Omega) } + r_0^ { -1/p}
\| u - \bar u \|_{ L ^ p ( \partial 
\Omega ) } \leq C\| \nabla u \|_ {L^ 2 ( \Omega)},
\end{equation}
where $ \bar u = \average _ \Omega u \, dy$. 
It follows that $ \Lambda _f$ lies in $ \sobolev { 2 } { -1}( \Omega)$ and thus the 
solution of the Neumann problem
\begin{equation}
\label{NLp}
\left\{
\begin{array}{ll}
A(u, \phi ) = \Lambda _f (\phi ) , \qquad &\phi  \in \sobolev 2 1 ( \Omega) \\
u \in \sobolev 2 1 _ \emptyset ( \Omega) \\
\int _{ \partial \Omega } u ^ \alpha v^ \alpha \, d\sigma =0 , \qquad & v \in {\cal V}. 
\end{array}
\right. 
\end{equation}
exists and satisfies $\|\nabla u \| _ { L^ 2( \Omega) } 
\leq  C(r, r_0)\| f\| _ { L^ r( \Omega)}$.  
Next, we consider the adjoint problem for $ g\in L^ r ( \Omega)$, 
\begin{equation}
\label{ANLp}
\left \{
\begin{array}{ll}
A(\phi, w) = \Lambda^* _g (\phi ) , \qquad &\phi  \in \sobolev 2 1 ( \Omega) \\
w \in \sobolev 21  _ \emptyset ( \Omega) \\
\int _{ \partial \Omega } w ^ \alpha v^ \alpha \, d\sigma =0 , 
\qquad & v \in {\cal V^*}. 
\end{array}
\right.
\end{equation}
where  $\Lambda _g  ^* (\phi) = \langle \lambda _g, \phi \rangle_ { \partial \Omega}
- \langle g , \phi \rangle $ and  $ \lambda _g\in \cal V$ satisfies  $ \int _ \Omega v^ \alpha g^ \alpha \, dy 
=\int _{ \partial \Omega } v ^ \alpha \lambda _g ^ \alpha \, d\sigma $, $ v\in
\cal V$.  Similar considerations give the existence of a solution to the
adjoint problem with $\| \nabla w \|_{L^2(\Omega)} \leq 
C \| g \| _ { L^ r ( \Omega) }$. 
Furthermore, from (\ref{Dual}) we obtain
\begin{equation}
\label{NDual}
 \int_ \Omega u ^ \alpha g^ \alpha\, dy = 
\int_ \Omega w ^ \alpha f^ \alpha\, dy.
\end{equation}
The boundary integral 
$ \int_ { \partial \Omega}  \lambda _f ^ \alpha w^ \alpha\, d\sigma$ vanishes 
since the Neumann data  $ \lambda _f $ lies in $  \cal V^ *
$ and $w$ is perpendicular to this space in $L^2( \partial \Omega)$. The other
boundary integral vanishes for similar reasons.

\begin{corollary}
\label{NTwoEstimates} 
Let $ t \in ( 2, t_0)$ with $t_0$ as in Theorem \ref{CaccTheorem} 
and $r$ be defined by 
$ 1/r = 1/2 + 1 / t$. 
If $ f \in L^ r ( \Omega)$,  then the solution of 
the weak Neumann problem (\ref{NLp}) satisfies the estimates
\begin{equation}
\label{NPrimalEstimate}
\|\nabla u  \|_ { L^t ( \Omega )} \leq C \| f \| _ { L^ r ( \Omega)}
\end{equation}
and 
\begin{equation}
\label{NDualEstimate}
\|  u \| _ {L^ { r' }( \Omega) } 
\leq C \| f \|_{\sobolev { t'} {-1} ( \Omega)}.
\end{equation}
\end{corollary}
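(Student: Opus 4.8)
The plan is to mimic the proof of Corollary \ref{TwoEstimates}, inserting the modifications forced by the non-trivial null spaces ${\cal V}$ and ${\cal V}^*$. The starting observation is that the solution $u$ of the weak Neumann problem (\ref{NLp}) is a weak solution of the mixed problem (\ref{WMP}) in the extreme case $D=\emptyset$, with interior datum $f$ and Neumann datum $f_N=\lambda_f$. Since $\lambda_f$ lies in ${\cal V}^*$, Proposition \ref{AnotherNorm} gives $\|\lambda_f\|_{L^2(\partial\Omega)}\le C\|f\|_{L^r(\Omega)}$; because ${\cal V}^*$ is finite dimensional and its elements are continuous by Corollary \ref{HolderCor}, all norms on ${\cal V}^*$ are equivalent, so in fact $\|\lambda_f\|_{L^\infty(\partial\Omega)}\le C\|f\|_{L^r(\Omega)}$. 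We also already have the energy bound $\|\nabla u\|_{L^2(\Omega)}\le C\|f\|_{L^r(\Omega)}$ from the paragraph preceding the statement.

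To prove (\ref{NPrimalEstimate}), apply the main estimate of Theorem \ref{CaccTheorem} --- which is valid when $D=\emptyset$ --- on the local domains $\locdom x{r_0}$, with some $p\in(1,r)$ and any fixed $p_N\in(1,\infty)$. The term on the right-hand side of that estimate involving $|\nabla u|^2$ is controlled by the energy bound together with the exponent identity $1-2/t=2/r'$; the fractional-integral term is absorbed into $\|f\|_{L^r(\Omega)}$ using the mapping property (\ref{MaxFrac}) with $p\in(1,r)$; and the additional term involving $P_{2r_0}(|\lambda_f|^{p_N})$ is dominated pointwise by $\|\lambda_f\|_{L^\infty(\partial\Omega)}^{p_N}\le(C\|f\|_{L^r(\Omega)})^{p_N}$ (one could instead invoke (\ref{Pmax})). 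Covering $\Omega$ by the finitely many local domains $\locdom x{r_0}$ coming from the coordinate rectangles and summing with Minkowski's inequality yields (\ref{NPrimalEstimate}).

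The estimate (\ref{NDualEstimate}) is then obtained by duality, exactly as (\ref{DualEstimate}) was deduced from (\ref{PrimalEstimate}) in Corollary \ref{TwoEstimates}, but using the adjoint Neumann problem (\ref{ANLp}) and the identity (\ref{NDual}) in place of (\ref{Dual}). Since $\nabla u\in L^t(\Omega)$ with $t>2$, the solution $u$ of (\ref{NLp}) is H\"older continuous, hence belongs to $L^{r'}(\Omega)$, and we compute its norm as the supremum of $\int_\Omega u^\alpha g^\alpha\,dy$ over $g\in L^r(\Omega)$ with $\|g\|_{L^r(\Omega)}\le1$. For such a $g$, let $w$ be the solution of (\ref{ANLp}) with datum $g$; then (\ref{NDual}) gives $\int_\Omega u^\alpha g^\alpha\,dy=\int_\Omega w^\alpha f^\alpha\,dy=\langle f,w\rangle$. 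The form $A^*$ of $L^*$ satisfies $A^*(u,v)=A(v,u)$, so $L^*$ obeys the same hypotheses as $L$, and the primal estimate just proved, applied to the adjoint, gives $\|\nabla w\|_{L^t(\Omega)}\le C\|g\|_{L^r(\Omega)}$. Therefore $|\langle f,w\rangle|\le\|f\|_{\sobolev{t'}{-1}(\Omega)}\|\nabla w\|_{L^t(\Omega)}\le C\|f\|_{\sobolev{t'}{-1}(\Omega)}$, and taking the supremum over $g$ gives (\ref{NDualEstimate}).

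Most of this is routine: the bookkeeping with Sobolev exponents and the covering argument are identical to Corollary \ref{TwoEstimates}, and Theorem \ref{CaccTheorem} is available off the shelf. The only genuinely new point --- and the place where the Neumann structure intervenes --- is the bound $\|\lambda_f\|_{L^\infty(\partial\Omega)}\le C\|f\|_{L^r(\Omega)}$ on the auxiliary Neumann datum, which is where the finite dimensionality of ${\cal V}^*$ and Proposition \ref{AnotherNorm} are used. One should also keep in mind, as throughout this section, that $f$ is to be read as a functional on the homogeneous Sobolev space that annihilates constants --- consistent with the construction of $\Lambda_f$ and with the normalization $\int_{\partial\Omega}w^\alpha v^\alpha\,d\sigma=0$, $v\in{\cal V}^*$, satisfied by the adjoint solution $w$ --- so that the pairing $\langle f,w\rangle$ in the duality step is unambiguous.
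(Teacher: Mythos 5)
Your proposal is correct and follows essentially the same route as the paper, which simply reduces to the argument of Corollary \ref{TwoEstimates} once $\Lambda_f$ is known to lie in the dual of $\sobolev 2 1 _\emptyset(\Omega)$ (via (\ref{Homogeneous})), with the duality step run through (\ref{NDual}) and the adjoint problem (\ref{ANLp}). Your explicit control of the auxiliary Neumann datum $\lambda_f$ through the finite dimensionality of ${\cal V}^*$ and Proposition \ref{AnotherNorm} is exactly the point the paper leaves implicit here and invokes later in the proof of Theorem \ref{NGreenTheorem}, so it is a faithful filling-in of detail rather than a different approach.
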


\begin{proof} With the work above the proof is the same as Corollary \ref{TwoEstimates}.
The proof of Corollary \ref{TwoEstimates} fails at the first line, because we do not
have a Poincar\'e inequality. Since $ \Lambda _f ( u) = \Lambda _f (u-\bar u)$, we
may use the estimates (\ref{Homogeneous}) to show that $ \Lambda_f$ lies in
the dual of $ \sobolev 2 1 _ \emptyset( \Omega)$.  With $ \Lambda _f \in \sobolev 2 1 _ \emptyset ( \Omega)$,  the proof of this Corollary
is identical to the proof of Corollary \ref{TwoEstimates}. 
\end{proof}

\begin{remark} For the second estimate of Corollary \ref{NTwoEstimates} 
to be useful, we must have 
that $f$ lies in $ \sobolev {t ' } {-1} ( \Omega)$.
In particular, we must have that the mean-value of $f$ is zero. 
\end{remark} 

\begin{corollary}
\label{NAtomicCorollary}
 If $a$ is an atom for the Hardy space 
  $H^1 _ \emptyset( \Omega)$, then the solution
of (\ref{NLp}) with $f=a$ satisfies
$$
\| u \| _ { L ^ \infty ( \Omega) } \leq C.
$$
\end{corollary}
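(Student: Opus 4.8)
The plan is to repeat the argument of Corollary~\ref{AtomicCorollary} almost verbatim, with Corollary~\ref{NTwoEstimates} playing the role of Corollary~\ref{TwoEstimates}. Let $a$ be an atom supported in a local domain $\locdom x \rho$ and let $u$ be the solution of (\ref{NLp}) with $f = a$. The first thing I would record is that, since $D = \emptyset$, the quantity $\avg a x \rho$ is always the honest mean value $\average_{\locdom x \rho} a \, dy$, which vanishes by the definition of an atom; combined with $\supp a \subset \locdom x \rho$, this gives $\int_\Omega a^\alpha \, dy = 0$ for each $\alpha$. In particular $a$ lies in $\sobolev {t'} {-1}(\Omega)$, which is precisely the hypothesis (noted in the Remark after Corollary~\ref{NTwoEstimates}) that makes the dual estimate (\ref{NDualEstimate}) available.

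Next, fix $z \in \Omega$ and $t \in (2, t_0)$, and define $r$ and $r'$ by $1/r = 1/2 + 1/t$ and $1/r' = 1/2 - 1/t$ as in Corollary~\ref{NTwoEstimates}. Exactly as in (\ref{StepA}), Morrey's inequality (\ref{Morrey}) on $\locdom z \rho$ together with H\"older's inequality gives
\[
|u(z)| \leq |u(z) - \avg u z \rho| + |\avg u z \rho| \leq C\bigl(\rho^{1-2/t}\|\nabla u\|_{\lp t (\Omega)} + \rho^{2/t-1}\|u\|_{\lp {r'}(\Omega)}\bigr).
\]
For the gradient term I would invoke (\ref{NPrimalEstimate}) and the normalization $\|a\|_{\lp\infty(\Omega)} \leq 1/|\locdom x\rho|$ (with $|\locdom x\rho| \sim \rho^2$) to obtain $\|\nabla u\|_{\lp t(\Omega)} \leq C\|a\|_{\lp r(\Omega)} \leq C\rho^{2/t-1}$.

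For the remaining term, the claim $\|a\|_{\sobolev{t'}{-1}(\Omega)} \leq C\rho^{1-2/t}$ is proved just as the claim (\ref{DualClaim}): for $v \in \sobolev t 1(\Omega)$ the cancellation $\int_\Omega a^\alpha \, dy = 0$ together with $\supp a \subset \locdom x \rho$ lets one replace $v$ by $v - \avg v x \rho$, and then $\|a\|_{\lp 1(\Omega)} \leq 1$ and Morrey's inequality (\ref{Morrey}) bound $\int_\Omega a^\alpha v^\alpha \, dy$ by $C\rho^{1-2/t}\|\nabla v\|_{\lp t(\Omega)}$. Feeding this into (\ref{NDualEstimate}) gives $\|u\|_{\lp{r'}(\Omega)} \leq C\rho^{1-2/t}$, and substituting both bounds into the displayed inequality yields $|u(z)| \leq C$ with $C$ independent of $z$ and of the atom. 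I do not anticipate a genuine obstacle here: the only point requiring attention is the bookkeeping that in the $D=\emptyset$ case the atom's cancellation condition really delivers the global vanishing of the mean, which is exactly what licenses the use of (\ref{NDualEstimate}); everything else is the mechanical transcription of the proof of Corollary~\ref{AtomicCorollary}.
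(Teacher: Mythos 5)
Your argument is correct and is exactly the route the paper takes: its proof of this corollary simply says the result follows from Corollary \ref{NTwoEstimates} as in Corollary \ref{AtomicCorollary}, which is the transcription you carry out. Your extra observation that for $D=\emptyset$ the atom's cancellation condition is the genuine vanishing of the mean, which is what makes $a$ a legitimate element of $\sobolev{t'}{-1}(\Omega)$ and licenses (\ref{NDualEstimate}), is precisely the bookkeeping point the paper leaves implicit.
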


\begin{proof} Given the estimates Corollary \ref{NTwoEstimates}, the result follows 
as in Corollary \ref{AtomicCorollary}. 
\end{proof}

We are ready to define a Green function for the the Neumann problem. We say that 
$ G(x, \cdot)$ is a {\em Green function for the Neumann problem with pole at }$x$ if 
$G(x, \cdot) $ is in $L^ 1 ( \Omega)$ and whenever $a$ is an atom and $u$ the
corresponding
solution to (\ref{NLp}) with $f=a$, then 
we have
$$
u ^ \alpha ( x) = \int _ { \Omega } G^ { \alpha \beta } ( x, y ) a ^ \beta ( y) \,dy.
$$
It is clear that the Green function is unique up to a constant.

\begin{theorem} 
\label{NGreenTheorem}
Let $ \Omega $ be a Lipschitz domain and 
suppose that $L$ satisfies 
the ellipticity condition (\ref{Ellipticity}), has bounded 
coefficients (\ref{CoeffBound}),
and the form for $L$ 
is coercive on $ \sobolev 2 1 _ { \partial \Omega } ( \Omega )$. 
Then there exists a unique Green function $G(x,y)$ which satisfies the following 
estimates
\begin{eqnarray}
\label{BMON} 
 && G(x, \cdot ) \in BMO _ { \emptyset} ( \Omega)
\\
&& \nabla _y G(x, \cdot ) \in L^2 ( \Omega \setminus \locdom xr ) , 
\qquad \mbox{for } x\in \Omega, \ r>0
\\
\label{LogBoundN}
 &&
|G(x,y ) | \leq C ( 1+ \log ( d/|x-y |) ) , \quad x , y \in \Omega
\\
\label{HolderBoundN} 
 &&
|G(x,y ) - G(x, z ) | \leq C  \frac { |y - z |^\gamma} { |x-y |^\gamma} 
, \quad x,y,z \in \bar \Omega, \  |y-z| < \frac 1 2 | x- y |.
\end{eqnarray}
If $G$ and $\tilde G$ are the Green functions
 for $L$ and $ L^*$, respectively, we 
may find  representatives of $G$ and $ \tilde G$ so that
\begin{equation}
\label{SymmetryN}
G^ { \alpha \beta} (x,y)  = \tilde G^ { \beta\alpha}
(y,x) .
\end{equation}
Furthermore,  if  $u$ is a solution of the weak Neumann problem with 
$ f \in L^ p ( \Omega)$ 
and $f_N \in L^ p ( \partial \Omega )$ for some $p > 1$ and satisfying the
compatibility condtion (\ref{CC}), then we have 
\begin{equation}
\label{RepN}
u ^ \alpha (x) = \int _ \Omega G^ { \alpha \beta } ( x,y ) f^ \beta (y) \, dy
- \int _ { \partial \Omega }  G^ { \alpha \beta } ( x,y ) f^ \beta _N(y)
\, d\sigma, \qquad \alpha = 1, \dots, m. 
\end{equation}
\end{theorem}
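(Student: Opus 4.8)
The plan is to imitate the construction of Theorem~\ref{GreenTheorem}, replacing the mixed problem by the weak Neumann problem and carrying along the finite-dimensional correction supplied by Proposition~\ref{AnotherNorm}. Fix $x\in\Omega$, an index $\alpha$, and $\rho\in(0,r_0)$, put $g_\rho=e_\alpha\chi_{\locdom x \rho}/|\locdom x \rho|$, and use Proposition~\ref{AnotherNorm}(b) to produce $\lambda_{g_\rho}\in{\cal V}$ with $\|\lambda_{g_\rho}\|_{L^2(\partial\Omega)}\le C$ (uniformly in $\rho$ and $x$, since the total variation of $g_\rho\,dy$ is $1$) so that $g_\rho$ and $\lambda_{g_\rho}$ are compatible. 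Let $G_\rho^{\alpha\cdot}(x,\cdot)$ be the solution of the adjoint Neumann problem~(\ref{ANLp}) with this data, which exists by Theorem~\ref{SolveN} and the discussion following it; it is a genuine function, normalized by $\int_{\partial\Omega}G_\rho^{\alpha\gamma}(x,\cdot)v^\gamma\,d\sigma=0$ for $v\in{\cal V}^*$, and since the constants lie in ${\cal V}^*$ each of its components has zero mean on $\partial\Omega$. Pairing $G_\rho^{\alpha\cdot}(x,\cdot)$ against solutions of~(\ref{NLp}) with atomic data in the identity~(\ref{Dual}) and invoking Corollary~\ref{NAtomicCorollary}, the atomic characterization~(\ref{NewNorm}) of the $BMO_\emptyset$-norm gives $\|G_\rho(x,\cdot)\|_{*,\emptyset}\le C$ with $C$ independent of $\rho$ and $x$.

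Next I would extract, by Banach--Alaoglu in $BMO_\emptyset(\Omega)=(H^1_\emptyset(\Omega))^*$, a sequence $\rho_j\to0$ with $G_{\rho_j}(x,\cdot)\to G(x,\cdot)$ weak-$*$. If $u$ solves~(\ref{NLp}) with atomic datum $f=a$, then pairing $u$ against $G_\rho^{\alpha\cdot}(x,\cdot)$ in~(\ref{Dual}) gives $\average_{\locdom x \rho}u^\alpha\,dy=\int_\Omega G_\rho^{\alpha\gamma}(x,y)a^\gamma(y)\,dy$, the boundary term vanishing because $\lambda_{g_\rho}\in{\cal V}$ and $u$ satisfies the orthogonality in~(\ref{NLp}); since $u$ is H\"older continuous by Theorem~\ref{CaccTheorem} and~(\ref{Morrey}), and $a\in H^1_\emptyset(\Omega)$, letting $\rho_j\to0$ yields $u^\alpha(x)=\int_\Omega G^{\alpha\gamma}(x,y)a^\gamma(y)\,dy$, so $G(x,\cdot)$ is a Green function. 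Uniqueness modulo constants is immediate from~(\ref{NewNorm}), and then, exactly as in Theorem~\ref{GreenTheorem}, the whole family $\{G_\rho(x,\cdot)\}$ converges weak-$*$ to $G(x,\cdot)$; this is~(\ref{BMON}).

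For the remaining estimates I would prove everything first for the approximants $G_\rho(x,\cdot)$, which solve $L^*G_\rho^{\alpha\cdot}(x,\cdot)=0$ in $\Omega\setminus\locdom x \rho$ with Neumann data $\lambda_{g_\rho}$ bounded in $L^2(\partial\Omega)$, and then transfer them to $G(x,\cdot)$. Since $G_\rho(x,\cdot)\in BMO_\emptyset(\Omega)\subset L^p(\Omega)$ with uniform norm, the Caccioppoli inequality~(\ref{Caccioppoli}) and a finite covering of $\Omega\setminus\locdom x r$ by local domains give $\int_{\Omega\setminus\locdom x r}|\nabla_y G_\rho(x,y)|^2\,dy\le C(r)$ for $\rho<r/2$, which passes to the limit; hence $\nabla_y G(x,\cdot)\in L^2(\Omega\setminus\locdom x r)$ and $G(x,\cdot)$ weakly solves $L^*G^{\alpha\cdot}(x,\cdot)=0$ away from $x$ with bounded Neumann data, so Corollary~\ref{HolderCor} applies. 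Using Corollary~\ref{HolderCor} and the uniform $BMO_\emptyset$-bound, the chain-of-local-domains argument of Theorem~\ref{GreenTheorem} controls the oscillation; the additive constant is anchored by the normalization, since $\int_{\partial\Omega}G_\rho^{\alpha\gamma}(x,\cdot)\,d\sigma=0$ forces each component to vanish at some boundary point and a chain of $O(1+\log(d/|x-y|))$ doubling local domains joins a domain of radius $\sim|x-y|$ about $y$ to one about such a zero, so with~(\ref{uBound}) one obtains~(\ref{LogBoundN}) for $G_\rho$ with a uniform constant. The H\"older bound~(\ref{HolderBoundN}) follows as in Theorem~\ref{GreenTheorem} from the local H\"older estimate~(\ref{Holder}) on a local domain of radius comparable to $|x-y|$. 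The uniform log-bound together with Corollary~\ref{HolderCor} makes $\{G_\rho(x,\cdot)\}$ equibounded and equicontinuous on compact subsets of $\bar\Omega\setminus\{x\}$, so by Arzel\`a--Ascoli it converges to $G(x,\cdot)$ uniformly there; in particular $\int_{\partial\Omega}G(x,\cdot)\,d\sigma=0$ and all the pointwise bounds transfer to $G$.

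Finally, for the symmetry~(\ref{SymmetryN}) I would pair the approximate Green functions for $L$ and $L^*$ in~(\ref{Dual}): the boundary terms drop because the correcting data lie in ${\cal V}$ and ${\cal V}^*$ while the solutions are orthogonal to these spaces on $\partial\Omega$, and letting $\rho\to0$ along a common subsequence (Arzel\`a--Ascoli) gives $G^{\alpha\beta}(x,y)=\tilde G^{\beta\alpha}(y,x)$ for the normalized representatives. For the representation~(\ref{RepN}) with $(f,f_N)$ satisfying~(\ref{CC}) and $u$ the solution of~(\ref{WNP}), I would pair $u$ against $G_\rho^{\alpha\cdot}(x,\cdot)$ in~(\ref{Dual}); the boundary contribution $\int_{\partial\Omega}u^\gamma\lambda_{g_\rho}^\gamma\,d\sigma$ vanishes since $\lambda_{g_\rho}\in{\cal V}$ and $u$ satisfies the orthogonality in~(\ref{WNP}), leaving $\average_{\locdom x \rho}u^\alpha\,dy=\int_\Omega G_\rho^{\alpha\gamma}f^\gamma\,dy-\int_{\partial\Omega}G_\rho^{\alpha\gamma}f_N^\gamma\,d\sigma$; since~(\ref{CC}) forces $f$ to have zero mean, hence $f\in H^1_\emptyset(\Omega)$, letting $\rho\to0$ and using weak-$*$ convergence on $\Omega$ and uniform convergence on $\partial\Omega$ gives~(\ref{RepN}). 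The main obstacle is the non-uniqueness of the Neumann problem: one must fix the representatives consistently and verify that the normalization $\int_{\partial\Omega}G(x,\cdot)\,d\sigma=0$ survives the limit, because, with no boundary set on which $G$ vanishes, this normalization is precisely what pins down the additive constant in the logarithmic bound; the recurring technical point is to carry the finite-dimensional correction $\lambda_{g_\rho}$ harmlessly through each step.
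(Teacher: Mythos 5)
Your overall scheme is the same as the paper's: approximate Green functions solving (\ref{ANLp}) with the corrected Neumann data $\lambda_\rho\in{\cal V}$ from Proposition \ref{AnotherNorm}, the uniform $BMO_\emptyset$ bound via Corollary \ref{NAtomicCorollary} and the atomic characterization (\ref{NewNorm}), Caccioppoli away from the pole, chains of local domains, Arzel\`a--Ascoli on $\partial\Omega$, and symmetry through (\ref{NDual}). The genuine gap is in your passage to the representation formula (\ref{RepN}). Your claim that the compatibility condition (\ref{CC}) forces $f$ to have zero mean is false: taking $v$ to be a constant vector (constants lie in ${\cal V}^*$) gives $\int_\Omega f\,dy=\int_{\partial\Omega}f_N\,d\sigma$, which need not vanish. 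Hence $f\notin H^1_\emptyset(\Omega)$ in general, and weak-$*$ convergence of $G_\rho(x,\cdot)$ in $BMO_\emptyset(\Omega)=(H^1_\emptyset(\Omega))^*$ does not justify the limit of $\int_\Omega G_\rho^{\alpha\beta}(x,y)f^\beta(y)\,dy$ in (\ref{RepApproxN}). The paper closes exactly this hole by proving that the means $\int_\Omega G_\rho^{\alpha\beta}(x,y)\,dy$ are bounded uniformly in $\rho$ and $x$: one solves (\ref{NLp}) with $f=e_\beta\chi_\Omega$, shows that this solution is bounded (Corollary \ref{NTwoEstimates} gives an $L^{r'}$ bound, Theorem \ref{CaccTheorem} with the bounded Neumann data $\lambda_f\in{\cal V}^*$ gives $\nabla u\in L^t$, $t>2$), and then applies (\ref{Dual}); with the means controlled, a subsequence of $G_\rho(x,\cdot)$ converges weakly in $L^{p'}(\Omega)$, which is what the limit actually requires. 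Your proposal contains no substitute for this step.

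The same missing ingredient undermines your anchoring of the additive constant in (\ref{LogBoundN}). You anchor at a boundary point where a component of $G_\rho^{\alpha\cdot}(x,\cdot)$ vanishes, but that zero may lie at distance comparable to $\dist(x,\partial\Omega)$ from the pole; to convert the pointwise zero into a bound on an average you must invoke (\ref{Holder}) and (\ref{uBound}) on a domain whose double avoids $x$, i.e.\ at scales below $\dist(x,\partial\Omega)$, and the resulting chain has length of order $\log\bigl(d/\dist(x,\partial\Omega)\bigr)$ rather than $\log(d/|x-y|)$. Thus your argument yields a constant that degenerates as $x$ approaches $\partial\Omega$, while (\ref{LogBoundN}) is claimed uniformly. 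The uniform bound on $\int_\Omega G_\rho(x,y)\,dy$ repairs this as well: comparing averages of a $BMO_\emptyset$ function over overlapping comparable domains costs only $C\|G_\rho(x,\cdot)\|_{*,\emptyset}$ per step and needs no equation (so the pole is harmless), so the $\Omega$-average pins the average over a domain of radius $r_0/2$ uniformly in $x$, and the chain argument of Theorem \ref{GreenTheorem} then gives (\ref{LogBoundN}) with a uniform constant. Apart from these two points, your outline is essentially the paper's proof.
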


\begin{proof} 
We fix $ x\in \Omega$,  $ \alpha \in \{ 1, \dots, m\}$, $ \rho$ with 
$0<\rho < r_0$, 
and let 
$ g =  {e _ \alpha }  \chi _ { \locdom x \rho }/ { | \locdom x \rho |} $
where 
$e_\alpha$ is the unit vector in the direction of the $\alpha$th coordinate axis. 
We let $ \lambda^{ \alpha \cdot} _ \rho \in \cal V$ be chosen so that 
$g$ and $\lambda^ { \alpha \cdot} _\rho|_{ \partial \Omega}$ 
satisfy the compatibility condition needed
to solve the adjoint problem  (\ref{ANLp}). 
We let $G^ { \alpha \cdot }_ \rho (x, \cdot ) $ be the solution of 
   (\ref{ANLp}) with 
$g$ as above and $g_N$ replaced by $ \lambda^ {\alpha \cdot}
 _ \rho$. We let $u$ be a solution of the Neumann problem
 (\ref{WNP}) with data $f$ and $f_N$ in $ L^ p ( \Omega)$ 
 and $L^{ p}(\partial \Omega)$,  $p>1$, 
 respectively. From  (\ref{Dual}), we 
have 
\begin{equation}
\label{RepApproxN}
\average _ { \locdom x \rho } u ^ \alpha( y ) \, dy 
= \int _{ \partial \Omega  } G^ { \alpha \beta }_\rho( x, y ) f_N ^ \beta (y) \, dy -
\int _ \Omega G_\rho^ { \alpha \beta } ( x, y ) a ^ \beta (y) \, dy
\end{equation}
If we let $f$ be an atom and $f_N = 0$, then  Corollary \ref{NAtomicCorollary} 
implies
the left-hand side  of (\ref{RepApproxN}) is bounded by 
a constant that is  
independent of $a$. It follows that 
$\| G_ \rho ^ { \alpha \cdot}(x, \cdot)\|_ {* , \emptyset}  \leq C$  with 
$C$ independent of $ \rho$. 
 We may use compactness in the finite dimensional
space $\cal V$ and the Banach-Alaoglu theorem to find $ \lambda ^ { \alpha \cdot }
\in \cal V$, 
$G^ { \alpha \cdot } ( x, \cdot ) \in BMO _ \emptyset ( \Omega)$, and a sequence
$ \{ \rho _j \} _ { j =1 } ^ \infty$ with $ \lim _ { j \rightarrow \infty }
\rho _j = 0$ so that 
$ G_ { \rho _j } ^ { \alpha \cdot }( x, \cdot) $ 
converges weakly to $ G^ { \alpha \cdot } ( x, \cdot )$ in $ BMO_ \emptyset( \Omega)$
and $\lambda ^ { \alpha \cdot } _{ \rho_j}$ converges to $ \lambda ^ { \alpha \cdot}$ in 
$L^ 2( \partial \Omega)$.
Combining this weak convergence of $G_{\rho_j}$  in $BMO_ \emptyset ( \Omega)$ 
with the H\"older continuity of $u$, we
obtain that 
$$
u^ \alpha(x) = \int _ {\Omega} G^ { \alpha \beta } (x, y ) a^ \beta ( y) \, dy.
$$
Thus, we have found our Green function and we have 
(\ref{BMON}). From the uniqueness  for the solution $u$,
it follows that the limit $ G(x, \cdot)$ is unique and thus we have that 
$ G_\rho$ converges
for all $ \rho$ and not just a  subsequence. 

Using the Caccioppoli inequality (\ref{Caccioppoli}) we can show that 
$$
\int _ { \Omega \setminus \locdom x r } |\nabla G_ \rho (x,y ) |^ 2\, dy 
\leq C(r) , \qquad \rho < r/2.
$$
As this estimate is uniform  in $ \rho$ for $ \rho $ sufficiently small, 
we obtain
that  for all $ \phi \in \sobolev 2 1 _ \emptyset ( \Omega)$ 
which vanish in neighborhood
of $x$ that 
$$
\int _ \Omega a^{ij}_{\gamma \beta}  \frac { \partial \phi ^ \beta}{\partial y _ j }
\frac { \partial G^ { \alpha \gamma }}{ \partial y_i } (x, y )   
\, dy = \int _ {\partial \Omega }\lambda ^ { \alpha \gamma } \phi^ \gamma \, d\sigma.
$$

The pointwise estimates (\ref{LogBoundN}) and (\ref{HolderBoundN}) 
follow from (\ref{BMON}) and the local estimates in Corollary \ref{HolderCor}.
As $G$ has non-zero Neumann data, it is important that these results allow non-zero
Neumann data. 
The 
argument is identical to that of Theorem \ref{GreenTheorem}.  

We establish the symmetry property (\ref{SymmetryN}). 
 We let $ G_\rho$ and $ \tilde G_ \rho$ be 
approximate Green functions for $L$ and $L^*$ as defined above. 
 From  (\ref{NDual}), we obtain  that
\begin{equation*}
\average _ {\locdom x \rho } \tilde G_ \rho^ { \beta \alpha }( y , z) 
 \, dz 
= 
\average _ {\locdom y \rho }  G^ {\alpha \beta }_\rho( x, z)  \, dz. 
\end{equation*}
Now, we may let $ \rho$ tend to zero and obtain (\ref{SymmetryN}).

Next, we claim that the mean value $ \int _ \Omega 
G _\rho^ { \alpha \beta } ( x, y ) \, dy $
is bounded for all $ \rho$. 
As a first step, let $ u$ be the solution of (\ref{NLp}) with $ f = e _
\beta \chi_ \Omega$. According to Corollary \ref{NTwoEstimates}, the
solution  $u$  lies  in $L^r ( \Omega)$ for some  $r$.  As the Neumann
data $ \lambda _f$ lies in ${ \cal V ^ *}$ and hence is bounded, we
may use the estimates of Theorem \ref{CaccTheorem} to conclude that
$\nabla u$ lies in $L^t( \Omega)$ for some $ t>2$.  Since $u $ lies in
$L^ r( \Omega)$ and $ \nabla u $ lies in $L^t ( \Omega)$, we may
conclude that $u$ is bounded. We apply 
 (\ref{Dual}) and obtain 
$$
\average _ { \locdom x \rho } u ^ \alpha\, dy = \int _ \Omega G_\rho^ { \alpha \beta } 
(x,y) \, dy .
$$
Since $u$ is bounded,    the claim follows. 
Since $ G_ \rho ^ { \alpha \cdot}( x, \cdot)$ is bounded in 
$BMO_ \emptyset( \Omega)$ and
the mean values are bounded, it follows that a subsequence of 
 $G^ { \alpha \cdot}_ \rho(x\cdot)$ 
converges weakly in $L^p( \Omega)$ for each $p $ finite. In addition, 
$G_\rho^ { \alpha \cdot}(x, \cdot)$ is a solution of $L^ * w=0$ in a 
neighborhood of the boundary
and the Neumann data $ \lambda _ \rho$ lies in ${\cal V}$ and hence is bounded. 
Thus, we
may extract a subsequence $\{ G_{ \rho _j } (x, \cdot)\}$ 
 which converges uniformly on $ \partial \Omega$. 
We may let 
$ \rho \rightarrow 0 ^ + $ in the representation formula (\ref{RepApproxN}) and
use  the
continuity of $u$ to obtain the representation formula (\ref{RepN}). 

\end{proof}

\section{The Green function in the plane} 
\label{Free}

In this section, we define a Green function in the plane. We will work in the
homogeneous  Sobolev  
space $ \sobolev 2 1 ( \reals ^2)$ which consists of functions $ \phi $ 
with $ \nabla \phi \in L^ 2 ( \reals ^ 2)$. We  norm this space with
$$
\| \phi \| _ { \sobolev 2 1 ( \reals ^ 2 ) } = \left ( \int _ { \reals ^ 2} 
|\nabla \phi |^ 2 \, dy \right) ^ { 1/2} .
$$
and the elements of this Hilbert space will be equivalence classes of
functions under the relation $u$ is equivalent to $v$ 
if $ u - v $ is constant.  In $ \reals ^2$,
the local domains $ \locdom x \rho $ are disks and  we will use the more
standard $ \ball x \rho = \{ y : |x-y | < \rho\}$ to denote these disks. 

We assume that the form $A$ is coercive in the sense that 
\begin{equation}
\label{CoerceF}
A(u,u ) \geq M^ { -1} \int _ {\reals^2}  |\nabla u |^ 2 \, dy , \qquad u \in 
\sobolev 2 1 ( \reals ^2).
\end{equation}
We may see that this condition holds for the Lam\'e system using an approximation 
argument and integration by parts.

It is an immediate consequence of the Lax-Milgram theorem that the weak formulation 
of the problem in the plane
\begin{equation} 
\label{FP}
\left \{ 
\begin{array} {ll}
A(u, \phi ) = -\langle f, \phi\rangle , \qquad & \phi \in \sobolev 2 1 ( \reals ^2 ) 
\\
u \in \sobolev 2 1 ( \reals ^ 2)
\end{array}
\right. 
\end{equation}
has a unique solution when $f$ is in the dual of $ \sobolev  2 1 ( \reals ^ 2)$, 
$\sobolev 2 { -1} ( \reals ^2)$.

We will approximate $u$ by considering the Dirichlet problem in disks
$ B_R = \{ x  :|x|< R \}$ for $R>0$. 
We let $a$ be an atom for $ \reals ^2$ that is supported in $ \ball x \rho$
and for $R$ large, we let $u_R$ be the solution of the Dirichlet problem
$$
\left \{ \begin{array}{ll}
A(u_R, \phi ) = -\int_ \Omega a^ \alpha \phi ^ \alpha \, dy \qquad & \phi  \in
 \sobolev 
2 1 _ { \partial B _R} ( B_R ) \\
u_ R \in \sobolev 2 1 _ { \partial B_R} ( B_R) 
\end{array}
\right.
$$
From Proposition \ref{EnergyProp}  and Corollary \ref{AtomicCorollary},  
we have that 
$
\| u _R \|_{ L^ \infty(B_R)} +
\| \nabla u_R \| _ { L^ 2 ( B _ R )}  \leq C$ and from Corollary 
\ref{TwoEstimates} and Morrey's inequality (\ref{Morrey})
\begin{equation}
\label{ApproxBounds}
  \| u _ R \| _ { L^ { r' } ( B_R)}
+ \sup _{ x\neq y } \frac {|u_R(x) - u_R(y) |} { |x-y|^ \gamma} 
\leq C ( \rho ). 
\end{equation}
The H\"older index $ \gamma = 1 -2/t $  with  $t$ and $ r'$ 
as in Corollary \ref{TwoEstimates}.  The estimates of Corollary \ref{TwoEstimates} 
are scale invariant and thus hold uniformly in $R$. The dependence on $ \rho $ arises
because the norm of an atom in $L^r(B_R)$ 
and $ \sobolev {t '}{-1}(B_R)$ will depend on $\rho$. 
Thus, we  have a function $u \in \sobolev 2 1 ( \reals ^ 2)$  so that 
$\lim _ { R\rightarrow \infty} u_R = u$ weakly in 
 $ \sobolev 2 1 ( B_S) $ for each $ S > 0$
and $u$ solves (\ref{FP}) with $ f= a$. Note that since the limiting function
$u$  is unique, we have convergence for the entire family, not just a subsequence. 
Furthermore from the Rellich compactness theorem, we have that $u_R $ converges 
in $L^ { r'} ( B_S) $ for each $S > 0$.  The H\"older estimate in (\ref{ApproxBounds})
 and the
Arzela-Ascoli theorem  imply that we also have that 
 $u_R$ converges locally uniformly to $u$. 
According to Corollary \ref{AtomicCorollary}, 
the functions $u_R$ are uniformly bounded, hence the same holds for $u$. 
Thus, if $ f=a$, an atom, the solution of (\ref{FP}) may be chosen so that
\begin{eqnarray}
\label{FPBound}
\| u \|_ { \sobolev 2 1 ( \reals ^ 2)} + \|u \|_{L^ \infty( \reals^2)} 
& \leq &  C \\
\label{FPExtras}
\| u \|_ { L^ { r' } ( \reals ^ 2 )} + 
\sup _{ x\neq y } \frac {|u(x) - u(y) |} { |x-y|^ \gamma} & \leq & 
C (\rho).
\end{eqnarray}

We give a definition of the Green function in the plane. We say that 
$G(x, \cdot )$
{\em is 
a Green function in the plane for (\ref{FP}) with pole at $x$} if $ G(x,\cdot) $ 
is in $L^ 1 _ { loc } ( \reals ^ 2 ) $ and for each atom $a$, the solution of 
(\ref{FP}) is given by 
\begin{equation}
\label{FPRep}
u^ \alpha(x) 
= \int _ { \reals ^ 2 } G^ {\alpha\beta}(x, y) a^ \beta(y) \, dy, \qquad \alpha 
=1, \dots,m .
\end{equation}
Since solutions of this weak problem are unique, it is immediate that for each
$x$,   $G(x, \cdot ) $ 
is a unique element of $BMO ( \reals ^2 )$. In other words, $G(x, \cdot )$ 
is unique
up to a constant. 

Finally, we give a theorem which establishes existence and regularity
of the Green function in $ \reals ^2 $. 

\begin{theorem} 
\label{FGreenTheorem}
If $L$ satisfies  (\ref{CoeffBound}) and (\ref{CoerceF}), then there is a 
unique Green function $G(x, \cdot)$ with pole at $x$ and the 
Green function satisfies
\begin{gather}
G(x, \cdot ) \in BMO ( \reals^2) \\
 \nabla_ y G(x, \cdot ) \in L^ 2 _ { loc} ( \reals ^ 2
\setminus \{ x \} )\\
\label{LogF}
 |G(x,y )- \average _{ \ball x 1 } G(x,z)\, dz | \leq C ( 1 + | \log ( |x-y |)| ), 
\qquad x, y \in \reals^2 \\
\label{HolderF}
  | G(x,y ) - G( x, z ) | \leq C \frac { |y -z | ^ \gamma } 
{ |x-y | ^ \gamma } , \qquad \mbox{if } | y - z | < \frac 1 2 | x - y | 
\end{gather}
The exponent $ \gamma$ is as in Corollary \ref{HolderCor} and 
$C$ may be chosen to depend on $M$ and   $\gamma$. 

Finally, if $G$ and $\tilde G$ are the Green functions in the plane 
for the operators
$L$ and $L^*$, then we may find representatives which satisfy the 
symmetry condition
\begin{equation}
\label{FSymm}
G^ { \alpha \beta } ( x,y ) = \tilde G ^ { \beta \alpha } ( y,x) .
\end{equation}
\end{theorem}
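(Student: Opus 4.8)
The plan is to follow the template of Theorems \ref{GreenTheorem} and \ref{NGreenTheorem}, the one genuinely new point being that $\reals^2$ is unbounded and carries no global Poincar\'e inequality, so the approximate Green functions cannot be built directly on $\reals^2$ and must instead be produced by exhausting the plane with the disks $B_R$ used in the preceding construction. Fix $x$, an index $\alpha \in \{1,\dots,m\}$, a small $\rho > 0$, and $R$ so large that $\ball x1$ and $\ball x\rho$ lie in $B_R$. Let $G^{\alpha\cdot}_{\rho,R}(x,\cdot) \in \sobolev 2 1 _{\partial B_R}(B_R)$ solve the adjoint Dirichlet problem on $B_R$ with interior datum $e_\alpha\chi_{\ball x\rho}/|\ball x\rho|$ and zero Dirichlet datum; this is legitimate since the functional $\phi \mapsto \average_{\ball x\rho}\phi^\alpha\,dy$ is bounded on $\sobolev 2 1 _{\partial B_R}(B_R)$ by the Poincar\'e inequality (\ref{DMatters}) while the form is coercive there by (\ref{CoerceF}) and extension by zero, so the disk problems are of exactly the kind already used at the start of this section. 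First I would record a uniform $BMO$ bound: testing the Green identity (\ref{Dual}) on $B_R$ against the solution $u_R$ of the $L$-problem on $B_R$ with an atom $a$ as datum gives $\int_{B_R} G^{\alpha\beta}_{\rho,R}(x,y)a^\beta(y)\,dy = \average_{\ball x\rho} u_R^\alpha\,dy$, and Corollary \ref{AtomicCorollary}, whose constant is scale invariant, bounds the right-hand side independently of $R$, $\rho$ and $a$; by the atomic description (\ref{NewNorm}) of the norm this is exactly $\|G^{\alpha\cdot}_{\rho,R}(x,\cdot)\|_{*,\partial B_R} \le C$, uniformly.

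Next I would pass to the double limit $R\to\infty$, $\rho\to 0$. Anchoring $H_{\rho,R} := G^{\alpha\cdot}_{\rho,R}(x,\cdot) - \average_{\ball x1}G^{\alpha\cdot}_{\rho,R}(x,z)\,dz$, the John--Nirenberg inequality (valid because $BMO_{\partial B_R}(B_R)\subset BMO_\emptyset(B_R)$) together with a chain of doubling disks gives $\|H_{\rho,R}\|_{L^2(B_S)}\le C(S)$ for each fixed $S$, uniformly in large $R$ and in $\rho$; and the Caccioppoli inequality (\ref{Caccioppoli}) with $f_N=0$ (the equation being homogeneous off $\ball x\rho$) yields $\int_{B_S\setminus\ball xr}|\nabla H_{\rho,R}|^2\,dy \le C(r,S)$ for $R\ge 2S$ and $\rho < r/2$. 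A diagonal compactness argument then gives a sequence $(\rho_j,R_j)\to(0,\infty)$ and a function $G^{\alpha\cdot}(x,\cdot)\in BMO(\reals^2)$ with $\nabla_y G^{\alpha\cdot}(x,\cdot)\in L^2_{loc}(\reals^2\setminus\{x\})$ to which $H_{\rho_j,R_j}$ converges weakly-$*$ in $BMO$, strongly in $L^2_{loc}$, and with gradients converging weakly in $L^2_{loc}(\reals^2\setminus\{x\})$; the limit satisfies $A(\phi,G^{\alpha\cdot}(x,\cdot))=0$ for every $\phi\in\sobolev 2 1 (\reals^2)$ vanishing near $x$, so the rows of $G(x,\cdot)$ solve $L^*w=0$ on $\reals^2\setminus\{x\}$. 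To verify (\ref{FPRep}) I would take $j\to\infty$ in $\int_{B_{R_j}} G^{\alpha\beta}_{\rho_j,R_j}(x,y)a^\beta(y)\,dy = \average_{\ball x{\rho_j}}u_{R_j}^\alpha\,dy$: the right side tends to $u^\alpha(x)$ by the local uniform convergence $u_R\to u$ and the continuity of the solution $u$ of (\ref{FP}) constructed above, while on the left the weak $L^2$ convergence on a ball containing $\supp a$ and the vanishing mean $\int a\,dy = 0$ (which kills the anchoring constant) give $\int_{\reals^2}G^{\alpha\beta}(x,y)a^\beta(y)\,dy$. Thus $G$ is a Green function; its uniqueness in $BMO(\reals^2)$ is the observation made before the statement, and uniqueness forces the whole family $\{G_{\rho,R}\}$, not just the subsequence, to converge.

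Finally, the pointwise bounds and symmetry go exactly as in Theorem \ref{GreenTheorem}. Since the rows of $G(x,\cdot)$ solve $L^*w=0$ near any point $y \ne x$ with locally square-integrable gradient, Corollary \ref{HolderCor} applies (with $f_N = 0$) on disks $\ball y\sigma$ whose closure avoids $x$; choosing $\sigma$ comparable to $|x-y|$ and using $G(x,\cdot)\in BMO$ gives the H\"older estimate (\ref{HolderF}) with the same $\gamma$ as in Corollary \ref{HolderCor}, and chaining disks of doubling radii from $\ball x1$ outward to scale $|x-y|$ when $|x-y|>1$, or inward to scale $|x-y|$ when $|x-y|<1$ --- the average of $G(x,\cdot)$ changing by at most $C\|G(x,\cdot)\|_{*}$ along each of the $\sim|\log|x-y||$ steps and (\ref{uBound}) turning averages into pointwise values --- gives the logarithmic bound (\ref{LogF}), the normalization by the average over $\ball x1$ appearing precisely because there is no boundary to pin down the constant. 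For (\ref{FSymm}) I would run the same construction for $L^*$ to obtain approximate Green functions $\tilde G_{\rho,R}$, note that (\ref{Dual}) on $B_R$ yields $\average_{\ball y\rho}G^{\alpha\beta}_{\rho,R}(x,z)\,dz = \average_{\ball x\rho}\tilde G^{\beta\alpha}_{\rho,R}(y,z)\,dz$, and let $\rho\to 0$, $R\to\infty$ using the uniform interior H\"older continuity from Corollary \ref{HolderCor} and Arzela--Ascoli, selecting the limiting representatives so that the identity is exact. I expect the main obstacle to be the first paragraph: because the bump datum has nonzero integral, $L^*w = e_\alpha\chi_{\ball x\rho}/|\ball x\rho|$ has no solution with gradient in $L^2(\reals^2)$ --- the solution grows logarithmically --- which is what forces the disk exhaustion; keeping the $BMO$ estimate uniform as $R\to\infty$ is exactly where the scale invariance of Corollaries \ref{AtomicCorollary} and \ref{TwoEstimates} is indispensable, and the additive normalization must be tracked with care through the double limit.
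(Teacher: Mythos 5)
Your construction is correct, but it takes a genuinely different route from the paper's. You produce the approximate Green functions by exhausting the plane with disks $B_R$, solving the adjoint Dirichlet problem on $B_R$ with the plain datum $e_\alpha \chi_{B_\rho(x)}/|B_\rho(x)|$, anchoring by subtracting the mean over $B_1(x)$, and passing to a double limit $\rho \to 0$, $R \to \infty$, the uniform $BMO$ bound coming from pairing with the disk solutions $u_R$ and the scale-invariant Corollary \ref{AtomicCorollary}. The paper avoids the exhaustion altogether: it replaces the bump by the mean-zero charge $f_\rho = \frac{1}{\pi\rho^2}\chi_{B_\rho(x)} - \frac{\rho^2}{3\pi}\chi_{B_{2/\rho}(x)\setminus B_{1/\rho}(x)}$, which is an atom (with constant depending on $\rho$), so the approximate problem is solvable directly in $\sobolev 2 1 (\reals^2)$ by the atomic theory already set up at the start of Section \ref{Free}; only the single limit $\rho\to 0$ is then needed, and the distant counter-charge contributes nothing in the limit because the solution with atomic data lies in $L^{r'}(\reals^2)$ by (\ref{FPExtras}). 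Your route is the classical one and makes the normalization in (\ref{LogF}) appear naturally; the paper's route buys uniform $BMO(\reals^2)$ bounds with no renormalization, and, for the symmetry, it proceeds quite differently from you: it first builds a representative of $G$ that is locally integrable on $\reals^2\times\reals^2$ (via the auxiliary solutions with datum $\frac{1}{\pi\rho^2}(\chi_{B_\rho(x)}-\chi_{B_\rho(0)})e_\alpha$ and Corollary \ref{TwoEstimates}), then pairs two atoms and uses Fubini to get $G(x,y)+\phi(x) = \tilde G(y,x)+\phi(y)$, whereas you use the exact symmetry identity for the disk approximants, in the spirit of Theorem \ref{GreenTheorem}. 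In that last step one point must be written out rather than only flagged: the disk identity involves the un-anchored approximants, whose means over $B_1(x)$ and $B_1(y)$ diverge like $\log R$, so what the limit gives you is convergence of the difference $\tilde c_{\rho,R}(y)-c_{\rho,R}(x)$ of the two normalizing constants (the anchored averages converge locally uniformly off the pole by Corollary \ref{HolderCor} and Arzel\`a--Ascoli); you then need to check, by comparing with a fixed reference pole, that this limiting quantity splits as a function of $y$ minus a function of $x$ before it can be absorbed into the choice of representatives in (\ref{FSymm}). The tools you cite do suffice for this, but the splitting argument is the one genuinely missing detail in your sketch.
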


\begin{proof} 
To construct the Green function $G(x, \cdot)$ we fix $x$, $ \rho>0$,   and let 
$f_ \rho = \frac 1 { \pi \rho ^ 2} \chi _ { B_ \rho (x)} - 
\frac { \rho^ 2} { 3 \pi}\chi_ { B _ {2/\rho } (x) \setminus B_ { 1/\rho } (x) }$. 
 As $ f_\rho e_ \alpha$ is an atom (though the constant depends on $ \rho$), we may
let $G_ \rho(x, \cdot)$ be the weak solution of the equation 
$L^* G_ \rho ^ { \alpha \cdot } ( x, \cdot ) = f_ \rho e_ \alpha$. 
We let  $a$ be an atom
and $u$  the solution of $Lu =a $ constructed above. 
As in the proof of (\ref{Dual}), the weak formulations of the equations satisfied by $ u$ and 
$ G^{\alpha\cdot}_\rho(x, \cdot) $ give 
\begin{equation}
\label{TwoWeak}
A(u , G^ { \alpha \cdot}_ \rho (x, \cdot ) )
= \int _ { \reals ^ 2} G^ { \alpha \beta }_ \rho (x, y)a^ \beta(y) \, dy
= \int _ { \reals ^ 2} f_ \rho u ^ \alpha \, dy.
\end{equation}
The estimate for
$\|u \|_ {L^\infty (\reals^2)}$ in (\ref{FPBound}) implies that 
$ |\int G_ \rho^ {\alpha \beta} (x,\cdot)a^ \beta \, dy |
= |\int f_ \rho u_ \alpha \, dy| 
\leq C$. Now, we may conclude from (\ref{NewNorm})  that 
$G_ \rho(x, \cdot )$ lies in $BMO( \reals ^2)$. 
 Thus, we may use the
Banach-Alaoglu Theorem to find a function $G(x,\cdot)$ and a  
sequence $\{G_{\rho _k } (x, \cdot) \}$  with $ \lim _ { k \rightarrow \infty }
\rho _k = 0 $ so that $ G_ { \rho_k } (x, \cdot)$ 
converges to $G(x, \cdot)$ in the weak-* topology of $ BMO( \reals ^2)$. 
From the estimates in (\ref{FPExtras}), it follows that 
$$
u^ \alpha(x) = \lim _ { \rho \rightarrow 0 ^ +} \int_ {\reals ^2} 
 f_ \rho u^ \alpha\, dy.
$$
Hence, we obtain the representation formula (\ref{FPRep}). 
 We may use Caccioppoli's
inequality (\ref{Caccioppoli})
to obtain uniform bounds on 
$ \nabla _ y G_R ( x, \cdot) $ in $L^2$ of compact subsets of 
$ \reals ^ 2\setminus \{ x \}$. Thus $G(x, \cdot )$ is a solution of 
$L^ * G(x, \cdot) =0 $ in $ \reals ^ 2 \setminus \{ x \}$. Now the 
pointwise estimates (\ref{LogF}) and (\ref{HolderF}) follow as they do
for the mixed problem. 

Finally, we establish the symmetry property (\ref{FSymm}). 
As our construction of $G$, stands we have 
no information about the behavior of  $G$ in the first variable. 
We begin by claiming that we can fix a representative of $G(x, \cdot) $ so that
$G$ is locally integrable in $ \reals ^2 \times \reals ^2$. Towards this
end, we observe that if  we fix $ x$  in $ \reals ^2$ and let $ h _ {\rho, x} = 
\frac 1 { \pi \rho ^2} ( \chi_ { B_ \rho (x) } - \chi _{ B_ \rho (0)}) $,
then $ e_ \alpha h_{\rho,x}$ 
lies in  $ \sobolev  {{ t'}} { -1}( \reals^2) $,  
the dual of $ \sobolev t 1 ( \reals ^2)$, 
$ t\geq 2$, with $\| h_ \rho e _ \alpha \| _ { \sobolev { t' } { -1} ( \reals ^2)}
\leq C |x |^ { 1 - 2/t}$.    We let $ v_\rho(x, \cdot) $ be the solution of 
$L^ * v_\rho( x, \cdot) =  e _ \alpha h _ { \rho, x}$.   
Using the estimates of Corollary \ref{TwoEstimates}, we may show that the 
 map  $ x \rightarrow v_ \rho (x, \cdot) $ is a continuous
map from $\reals ^2$ into $ L^ { r' } ( \reals^ 2) $ where $ r' $ is as in
(\ref{DualEstimate}). 
If we fix a representative of $G_ \rho(0, \cdot) $, the function 
$ G _ \rho (0, \cdot) + v_ \rho (x, \cdot)$ gives an approximate Green function 
that is locally integrable
in $ \reals^2 \times \reals ^2$. If we let $ \rho \rightarrow 0^+$, we obtain
the same conclusion for $G$.  
 
We let $ G$ and $ \tilde G$ be the Green functions as constructed in the previous
paragraph for $L$ and $L^*$. We fix atoms $a$ and $b$ and let $u$ and $v$ solve
the equations $Lu =a $ and $ L^*v=b$. From the weak formulation  (\ref{FP}), we
 have 
 $A(u,v) = \int u^ \alpha b^ \alpha\, dy = \int a^ \beta v^ \beta\, dy$.  Using the
 representation (\ref{FPRep}) and Fubini's theorem, we obtain 
$$
\int_ {\reals^2\times \reals^2}   b^ \alpha (x) G^ { \alpha \beta } (x,y) a^ \beta (y)
 \, dx \, dy 
= \int _{\reals^2\times\reals^2} 
b^ \alpha (x) \tilde G ^ { \beta\alpha} (y,x) a^ \beta(y) \, dx\, dy .
$$
As this holds for all atoms $a$ and $b$, we have 
functions $ \phi$ and $ \tilde \phi $ 
so that 
$$
G(x, y) + \phi (x) =  \tilde G(y,x) + \phi(y).
$$
\note{ To prove the symmetry property, we fix $ (x,y)$ and $ (x_0, y_0)$ in 
$ \reals ^2 \times \reals ^2$  and $ \alpha $ and $ \beta$ in $\{1,2\}$
 and let $ \{ a_k^\gamma\} _k$ and $ \{ b_k^\gamma \} _k$ be 
sequences which converge to 
$ \delta _{ \gamma\beta } ( \delta_ y - \delta _ { y _0 })$
 and $\delta _{ \gamma \alpha } ( \delta _x 
- \delta _ { x_0 }) $, respectively.  We let $ u ^ \gamma (z)= 
\int _ { \reals ^ 2 } G ^ {\gamma \delta } ( z, w) a ^ \delta (w) \, d w$
and $ v^ \gamma ( w) = \int _{ \reals ^ 2} \tilde G ^ { \gamma \delta } 
(w,z)  b ^ \delta (z) \, dz$. Then from (\ref{Dual}), we obtain that
$$
\int \!\! \int a^ \gamma ( w) \tilde G ^ { \gamma \delta } ( w, z) 
b ^ \delta (z) 
\, dw\, d z  = \int \!\!\int b ^ \gamma ( z) 
G ^ { \gamma \delta} ( z, w) a^ \delta (w) \, dw
\, dz .
$$
Taking  $ a$ and $ b$ from the sequences $ \{ a_k \} $ and $ \{ b_k \}$ and 
let $k$ tend to infinity.  We obtain  that for $(x,y)$ and $(x_0, y _0)$ 
in a set of full measure  of $ \reals^2 \times \reals ^2$ that 
$$
\tilde G ^ { \beta \alpha } ( y, x) - 
\tilde G ^ { \beta \alpha } (y, x_0 ) 
- \tilde G ^ { \beta \alpha } ( y _ 0, x) + 
\tilde G ^ { \beta \alpha } ( x_0, y _0 ) 
= G ^ { \alpha \beta } ( x,y) - G ^ { \alpha \beta } (x, y _0 ) 
- G ^ { \alpha \beta } ( x_ 0, y ) + G ^ { \alpha \beta } (x _0 , y _0 ) .
$$
Rearranging gives 
$$
\tilde G ^ { \beta \alpha } ( y, x) - 
\tilde G ^ { \beta \alpha } (y, x_0 ) 
 + 
\tilde G ^ { \beta \alpha } ( x_0, y _0 ) 
+  G ^ { \alpha \beta } ( x_ 0, y )
= G ^ { \alpha \beta } ( x,y) - G ^ { \alpha \beta } (x, y _0 ) 
 + G ^ { \alpha \beta } (x _0 , y _0 ) 
+ \tilde G ^ { \beta \alpha } ( y _ 0, x).
$$
which is the desired symmetry property. Since we know that $G$ and $ \tilde G$ are
continuous in the second variable, it follows that $ G(x,y) + \phi(x) $ is continuous 
for $ x\neq y$ and thus we define $G$ and $ \tilde G$ so that the symmetry 
property holds for all $x \neq y$, rather than a.e.

I don't plan to include this in the text.
}
\end{proof}
 
\note{
 
Write Lemma for solution of FP with atomic data.

Emphasize that results apply to Lame. 

Can we show that the solution operator is continuous from 
$ \sobolev {t'} {-1} $ into
$\sobolev {t'}1$? This would give a more direct proof of the continuity 
of $G$ in the
first variable. 
}


\def\cprime{$'$} \def\cprime{$'$} \def\cprime{$'$} \def\cprime{$'$}

\noindent{\small \today}
\end{document}